\tikzset{%
	>={Latex[width=2mm,length=2mm]},
	base/.style = {rectangle, rounded corners, draw=black, minimum width=7cm, minimum height=1.5cm, text centered},
	input/.style = {base, fill=green!30},
	algostep/.style = {base, fill=blue!30},
	theory/.style = {base, fill=yellow!30},
}
\newtheorem{theorem}{Theorem}[section]
\newtheorem{proposition}[theorem]{Proposition}
\newtheorem{lemma}[theorem]{Lemma}
\newtheorem*{claim}{Claim}
\theoremstyle{definition}
\newtheorem{definition}[theorem]{Definition}
\newtheorem*{definition*}{Definition}
\newtheorem*{remark}{Remark}
\newtheorem{example}[theorem]{Example}
\DeclareMathOperator{\Ima}{Im}
\DeclareMathOperator{\spn}{Span}
\DeclareMathOperator{\diam}{diam}
\DeclareMathOperator{\Id}{Id}
\DeclareMathOperator{\supp}{supp}
\DeclareMathOperator{\Int}{Int}
\newcommand{\RR}{\mathbb{R}}
\newcommand{\RRn}{\mathbb{R}^{n}}
\newcommand{\RRnn}{\mathbb{R}^{2n}}
\newcommand{\ZZ}{\mathbb{Z}}
\newcommand{\NN}{\mathbb{N}}
\renewcommand{\SS}{\mathbb{S}}
\newcommand{\DD}{\mathbb{D}}
\newcommand{\BB}{\mathbb{B}}
\newcommand{\FF}{\mathbb{F}}
\newcommand*\diff{\mathop{}\!\mathrm{d}}
\newcommand{\dint}{d_{\text{int}}}
\newcommand{\dbot}{d_{\text{bot}}}
\DeclareMathOperator{\Ham}{Ham}
\DeclareMathOperator{\Crit}{Crit}
\DeclareMathOperator{\Pivot}{Pivot}
\title{Approximation of Generating Function Barcode for Hamiltonian Diffeomorphisms}
\author{Pazit Haim-Kislev and Ofir Karin}
\date{}
\begin{document}
	\maketitle
	\begin{abstract}
		Persistence modules and barcodes are used in symplectic topology to define various invariants of Hamiltonian diffeomorphisms, however numerical methods for computing these barcodes are not yet well developed. In this paper we define one such invariant called the \textit{generating function barcode} of compactly supported Hamiltonian diffeomorphisms of $ \RRnn $ by applying Morse theory to generating functions quadratic at infinity associated to such Hamiltonian diffeomorphisms and provide an algorithm (i.e a finite sequence of explicit calculation steps) that approximates it.
	\end{abstract}
	
		\tableofcontents
	\pagebreak

	\section{Introduction and main results}
	
	Started by Polterovich and Shelukhin \cite{PolShel15}, it is by now standard to use the language of {\it persistence modules} \cite{ZomCar05} to describe the structure of action filtered Floer homologies associated to a Hamiltonian diffeomorphism (e.g. \cite{Team,Fraser,KisShe,PolShel15,PolSheSto,She19,stevenson,UsherZhang,Zhang} ). One useful ingredient in the theory of persistence modules is that they are determined by a {\it barcode} which is a multiset $\{I_k, m_k\}$ of intervals $I_k \subset \RR$ bounded from below, and multiplicities $m_k \in \NN$. The isometry theorem \cite{BauLes} of persistence modules states that the interleaving distance between two persistence modules, which is a metric calculated from algebraic properties, equals the bottleneck distance between their barcodes, which is a combinatorial metric which measures the minimal distance one is required to move the endpoints of one barcode in order to obtain the other. As of now it is not known how to explicitly calculate the barcode of a filtered Hamiltonian Floer homology of a Hamiltonian diffeomorphism except in very few specific examples. 
	
	In this paper we consider the generating function homology \cite{Tr94} of compactly supported Hamiltonian diffeomorphisms $\varphi \in \Ham_c(\RRnn,\omega_0)$, which is a notion closely related to Floer homology, and we construct an appropriate barcode. We then describe a finite time algorithm to calculate such barcodes up to an arbitrarily small error with respect to the bottleneck distance.
	
	An important feature of generating functions for $\Ham(\RRnn,\omega_0)$ is the existence of composition formulas, attributed to Chekanov and appearing in \cite{Chap90}, that allow one to explicitly obtain generating functions for $\varphi \in \Ham_c(\RRnn, \omega_0)$ by knowing such functions for $\{\varphi_j\}_{j=1}^N$ that satisfy $\varphi = \varphi_n \circ \ldots \circ \varphi_1$. Taking $N$ large enough one may assume that for all $j$, $\varphi_j$ is $C^1$-small, which means that its generating function takes a simpler form with no auxiliary variables (cf. Section \ref{sec:genfunc} below). Our main result uses this fact to construct an algorithm for calculating the barcode associated to the filtered generating function homology of a general compactly supported Hamiltonian diffeomorphism.

	\begin{theorem} \label{thm:introthm}
		Let $\varphi_1,\ldots,\varphi_N$ be compactly supported $C^1$ small Hamiltonian diffeomorphisms, and let $\varepsilon >0$. Then there exists a finite time algorithm that gets as input the values of the generating functions of $\varphi_1,\ldots,\varphi_N$ applied to an appropriate finite sample, and returns a barcode whose bottleneck distance from the barcode of $\varphi_N \circ \ldots \circ \varphi_1$ is bounded above by $\varepsilon$. Moreover, if one fixes $N$ and the bounds on the $C^1$-norms and the radius of the support of $\varphi_1,\ldots,\varphi_N$, then the time complexity bound of the algorithm (i.e. bound on the number of operations needed to compute all steps) is polynomial in $\frac{1}{\varepsilon}$.
	\end{theorem}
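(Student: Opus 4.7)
The plan is to combine three classical ingredients: Chekanov's composition formula for generating functions, the stability theorem of persistence modules under $C^0$-perturbations, and a standard finite-complex persistent homology algorithm applied to a cubical sampling of the composed generating function.

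First, I would use the Chekanov composition formula to assemble from the given data $\{\varphi_j\}_{j=1}^N$ an explicit generating function $F$ for $\varphi = \varphi_N \circ \cdots \circ \varphi_1$ on a Euclidean space of dimension depending only on $n$ and $N$. Since each $\varphi_j$ is $C^1$-small, the individual generating functions have no auxiliary variables, so the only auxiliary variables in $F$ are the $2n(N-1)$ coordinates introduced by the composition formula; $F$ is quadratic at infinity in these. Using the $C^1$-norm and support bounds, I would produce an explicit compact box $B \subset \RR^D$ outside of which $F$ agrees with its quadratic asymptote $Q$, and an explicit Lipschitz constant $L$ for $F$ on $B$. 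Outside $B$ the barcode contribution is determined combinatorially by the signature of $Q$, so the problem reduces to computing the persistent homology of $F|_B$ relative to $\partial B$ up to an error $\varepsilon$ in bottleneck distance.

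Next, I would sample $F$ on a cubical grid inside $B$ of spacing $\delta$, build the associated filtered cubical complex $X_\delta$ whose cell filtration values are the maxima (or minima, depending on the convention) of $F$ over each cube, and compute the persistent homology of this filtered complex by the standard matrix-reduction algorithm. The key estimate is the stability inequality: because $F$ is $L$-Lipschitz on $B$, the sup-norm distance between $F$ and the piecewise-constant function read off from the grid is at most $L\delta$, so the isometry/stability theorem (already quoted in the introduction via \cite{BauLes}) bounds the bottleneck distance between the barcode of $F$ and the barcode of $X_\delta$ by $L\delta$. Choosing $\delta = \varepsilon/L$ therefore gives the required $\varepsilon$-approximation. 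The number of grid points is $O((\diam(B)/\delta)^D) = O((L\diam(B)/\varepsilon)^D)$, and matrix reduction on a cubical complex of $M$ cells runs in $O(M^3)$; since $D$, $L$ and $\diam(B)$ depend only on $N$ and on the fixed $C^1$- and support-bounds, this is polynomial in $1/\varepsilon$.

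The main obstacle I anticipate is bookkeeping the Lipschitz and support estimates through Chekanov's composition formula: the auxiliary variables live in a region whose size is controlled by the $C^1$-bounds and the support radii of the $\varphi_j$'s, and the effective Lipschitz constant of $F$ on the truncation box $B$ must be tracked carefully so that $L$, $D$, and $\diam(B)$ are shown to be explicit functions of the fixed data. A secondary subtlety is that the algorithm only receives samples of the individual generating functions on a finite grid, so one must verify that evaluating $F$ at a grid point in $\RR^D$ requires only the sampled values of the $S_j$'s at points of a compatible grid, with error propagating linearly through the sum. Once these two accounting steps are in place, the stability theorem does the rest, and the complexity bound follows from counting cells and applying the standard bound for persistent-homology computation.
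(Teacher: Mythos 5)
Your overall plan matches the paper's: apply Chekanov's composition formula, restrict to a compact region, sample on a cubical grid, run matrix reduction, and control the error via a stability-type bound. However, there are two places where your sketch, as written, would break.

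First, the composed generating function $\#_{j=1}^N S_j$ (even after the fibre-preserving diffeomorphisms that diagonalize the bilinear terms) is \emph{not} quadratic at infinity; it is only \emph{asymptotically} quadratic at infinity, meaning the fibre derivative of $S' - \mathcal{Q}$ is bounded but $S'$ need not equal $\mathcal{Q}$ outside any ball. For instance, taking $q_N = p_0 = 0$ and a large $\xi$ concentrated in a single auxiliary coordinate, several of the arguments $(\widetilde{q}_j, \widetilde{p}_{j-1})$ remain at the origin, so the corresponding $S_j$-terms do not vanish no matter how large $|\xi|$ is. Consequently there is no compact box $B$ outside of which $F$ literally agrees with $Q$ along the fibre direction. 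The paper closes this gap with an explicit interpolation (their \Cref{lemma:GFQI}): one cuts off $S' - \mathcal{Q}$ by a carefully chosen radial bump function whose slope is small enough that no new fibre-critical points appear, which is legitimate because the fibre-critical set is a priori bounded. You cannot skip this step.

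Second, the reduction to ``the persistent homology of $F|_B$ relative to $\partial B$'' is not the right homological setup. The GF-homology is $H_{*+i}(E^b, E^{-\infty})$, and after restricting to a compact window the correct pair is $\bigl(E^b\cap(\SS^{2n}\times\BB_{R_f}),\ \SS^{2n}\times\partial D^i\bigr)$ \emph{with the base quotiented} to $\BB_{R_b}/\partial\BB_{R_b}$ (this is the content of the paper's \Cref{lemma:compdomforG^b}, which requires a nontrivial deformation-retract argument on each fibre plus a global compatibility check). Taking relative homology with respect to the full topological boundary $\partial B$ of a box collapses both the base boundary and the entire fibre boundary, which is not the same pair and would in general give different groups and a different barcode. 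Relatedly, the grading shift by the quadratic index $i$ has to be tracked explicitly, and your stability estimate compares a function on a smooth domain with a filtration on a cubical complex, so the classical $C^0$-stability theorem (for two filtrations on the same space) does not apply verbatim; one needs a homotopy equivalence between the cubical pair and the smooth pair that is uniformly $C^0$-close to the identity, and then an interleaving argument as in the paper's \Cref{lemma:cellcmplxhomot,lemma:interleavingpm}. Your complexity count is fine in spirit, but both of these points (the interpolation to a genuine GFQI and the correct relative pair with its homotopy-equivalence/interleaving justification) are load-bearing steps that are missing from the proposal.
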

	
	The diagram in \Cref{fig:intro} lays out the theoretical construction of the generating funtion barcodes (left hand side) along the approximation algorithm (right hand side). It is important to note that while the suggested algorithm is only capable of calculating relatively simple examples as of now, it is meant to be seen more as a "proof of concept", in the sense that it demonstrates the computability of such barcodes in symplectic topology and may be vastly improved in future research.

	We conclude the paper by offering an implementation of the suggested algorithm in a special case, where the generating functions of Hamiltonian diffeormophisms close to the identity may be themselves approximated (see \Cref{sec:radialprofile}). Namely, we consider the class of Hamiltonian diffeomorphisms of $ \RR^2 $ given by compositions of time-$ 1 $ maps of Hamiltonian flows generated by autonomous radial Hamiltonian functions. We note that even this simple construction of composing autonomous radial Hamiltonians can achieve very complicated dynamics, in which finding the barcode is considered very hard. We offer a computation example (see \Cref{sec:exmpale}) where the barcode of such composition is indeed well approximated.

	\subsection*{Structure of the paper}
	In \Cref{sec:prelim} we go over the relevant background and constructions needed to define GF-barcodes.
	In \Cref{sec:approxalgo} we prove \Cref{thm:introthm} by showing how GF-Barcodes can be numerically approximated.	
	In \Cref{sec:radialprofile} we discuss the implementation of the algorithm for autonomous radial Hamiltonian functions in $\RR^2$.

	\subsection*{Acknowledgements}
	This paper is a part of the second author’s thesis, carried out under the supervision of Prof. Leonid Polterovich and Prof. Lev Buhovsky at Tel-Aviv university. We thank them both for many meaningful discussions and for their original ideas motivating this project. The authors also wish to thank Prof. Yoel Shkolnisky from the applied math department of Tel-Aviv university for his kind assistance with computational issues. The first author is partially supported by the European Research Council grant No. 637386. The second author is supported by ISF grant numbers 1102/20 and 2026/17.

\pagebreak
	
	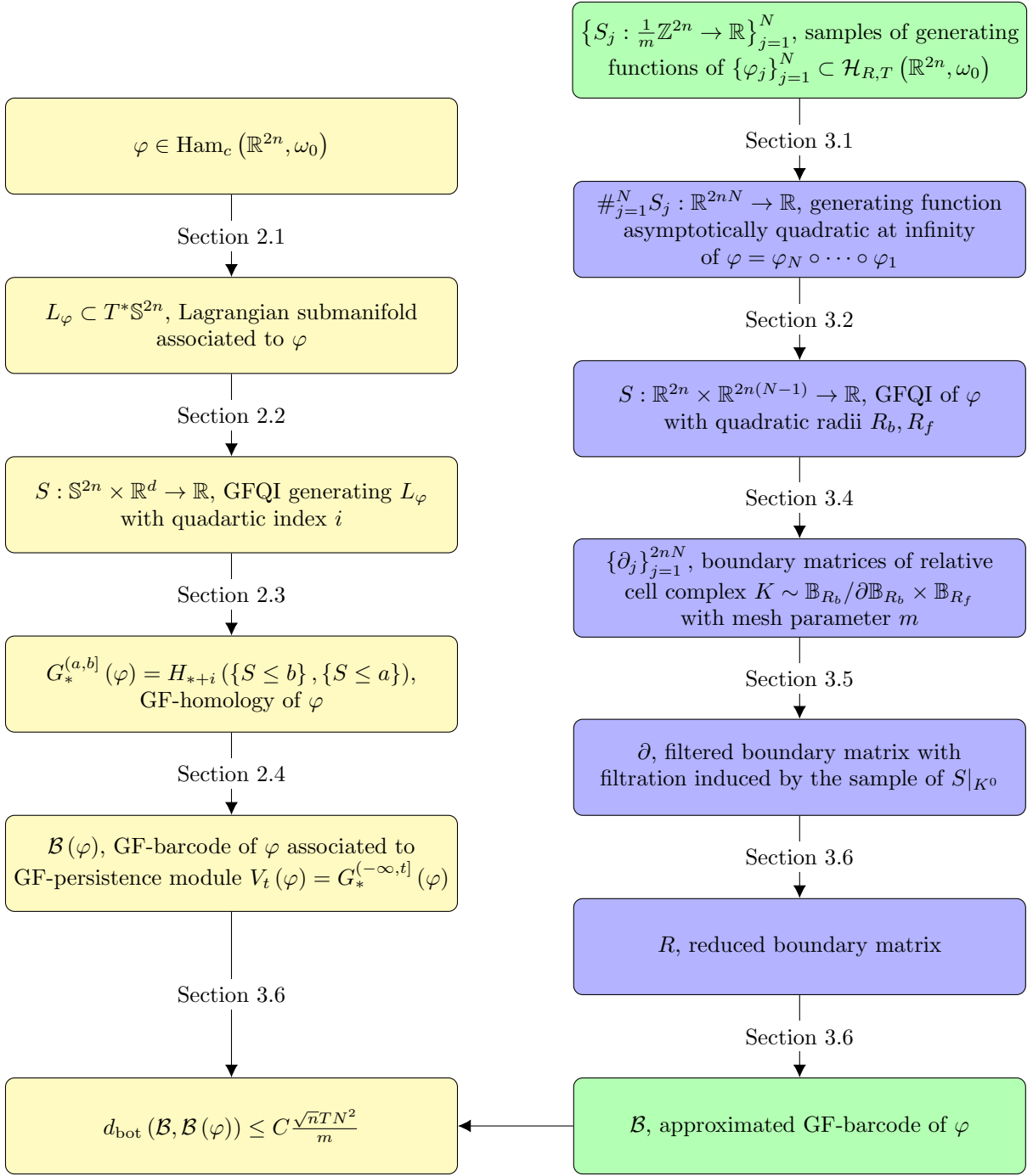
\begin{figure}[h]
		\centering
		\caption{Theoretical construction of GF-barcode (left) and the approximation algorithm (right)}
		\begin{tikzpicture}[node distance=2.8cm, every node/.style={fill=white}, align=center]
			\node (input)[input]{$ \left\{S_j:\frac{1}{m}\ZZ^{2n}\to\RR\right\}_{j=1}^N $, samples of generating\\functions of $ \left\{\varphi_j\right\}_{j=1}^N\subset\mathcal{H}_{R,T}\left(\RRnn,\omega_{0}\right) $};
			\node (compositionfomula)[algostep, below of=input]{$ \#_{j=1}^NS_j:\RR^{2nN}\to\RR $, generating function\\asymptotically quadratic at infinity\\of $ \varphi=\varphi_N\circ\dots\circ\varphi_1 $};
			\node (GFQIformula)[algostep, below of=compositionfomula]{$ S:\RRnn\times\RR^{2n\left(N-1\right)}\to\RR $, GFQI of $ \varphi $\\with quadratic radii $ R_b,R_f $};
			\node (bndmat)[algostep, below of=GFQIformula]{$ \left\{\partial_j\right\}_{j=1}^{2nN} $, boundary matrices of relative\\cell complex $ K\sim\BB_{R_b}/\partial\BB_{R_b}\times\BB_{R_f} $\\with mesh parameter $ m $};
			\node (filterredbndmat)[algostep, below of=bndmat]{$ \partial $, filtered boundary matrix with\\filtration induced by the sample of $ S\vert_{K^0} $};
			\node (reducedmat)[algostep, below of=filterredbndmat]{$ R $, reduced boundary matrix};
			\node (approxbarcode)[input, below of=reducedmat]{$ \mathcal{B} $, approximated GF-barcode of $ \varphi $};
			
			\node (Ham)[theory, left of=input,xshift=-6cm,yshift=-1.5cm]{$ \varphi\in\Ham_c\left(\RRnn,\omega_{0}\right) $};
			\node (lagsbmfd)[theory, below of=Ham]{$ L_\varphi\subset T^*\SS^{2n} $, Lagrangian submanifold\\associated to $ \varphi $};
			\node (GFQI)[theory, below of=lagsbmfd]{$ S:\SS^{2n}\times\RR^d\to\RR $, GFQI generating $ L_\varphi $\\with quadartic index $ i $};
			\node (GFhomology)[theory, below of=GFQI]{$ G_\ast^{\left(a,b\right]}\left(\varphi\right) = H_{\ast+i}\left(\left\{S\leq b\right\},\left\{S\leq a\right\}\right) $,\\GF-homology of $ \varphi $};
			\node (GFbarcode)[theory, below of=GFhomology]{$ \mathcal{B}\left(\varphi\right) $, GF-barcode of $ \varphi $ associated to\\GF-persistence module $ V_t\left(\varphi\right) = G_\ast^{\left(-\infty,t\right]}\left(\varphi\right) $};
			\node (bottleneckdist)[theory, left of=approxbarcode,xshift=-6cm]{$ \dbot\left(\mathcal{B},\mathcal{B}\left(\varphi\right)\right)\leq C\frac{\sqrt{n}TN^2}{m} $};
			
			\draw[->] (input) -- node[text width=4cm]{\Cref{sec:compositionformula}}(compositionfomula);
			\draw[->] (compositionfomula) -- node[text width=4cm]{\Cref{sec:obtainingGFQI}}(GFQIformula);
			\draw[->] (GFQIformula) -- node[text width=4cm]{\Cref{sec:step1}}(bndmat);
			\draw[->] (bndmat) -- node[text width=4cm]{\Cref{sec:step2}}(filterredbndmat);
			\draw[->] (filterredbndmat) -- node[text width=4cm]{\Cref{sec:step3}}(reducedmat);
			\draw[->] (reducedmat) -- node[text width=4cm]{\Cref{sec:step3}}(approxbarcode);
			\draw[->] (approxbarcode) -- (bottleneckdist);
			\draw[->] (Ham) -- node[text width=4cm]{\Cref{sec:hamtolagsbmnfd}}(lagsbmfd);
			\draw[->] (lagsbmfd) -- node[text width=4cm]{\Cref{sec:genfunc}}(GFQI);
			\draw[->] (GFQI) -- node[text width=4cm]{\Cref{sec:GFhomology}}(GFhomology);
			\draw[->] (GFhomology) -- node[text width=4cm]{\Cref{sec:GFbarcode}}(GFbarcode);
			\draw[->] (GFbarcode) -- node[text width=4cm]{\Cref{sec:step3}}(bottleneckdist);
		\end{tikzpicture}
	\label{fig:intro}
	\end{figure}

	\pagebreak
	
	\section{Preliminaries}	\label{sec:prelim}
	\subsection{From $ \Ham_c\left(\RRnn,\omega_0\right) $ to exact Lagrangian submanifolds of $ T^*\SS^{2n} $} \label{sec:hamtolagsbmnfd}
	Let us recall how to associate a Lagrangian submanifold of $ T^*\SS^{2n} $ to compactly supported Hamiltonian diffeomorphisms on $ \RRnn $.
	
	\begin{enumerate}
		\item Let $ \varphi\in\Ham_c\left(\RRnn,\omega_0\right) $. Denote the coordinate functions of $ \varphi $ with respect to the coordinates $ q = \left(q_1,\dots,q_n\right), p = \left(p_1,\dots,p_n\right) $ by $ \left(Q_\varphi,P_\varphi\right) = \varphi\left(q,p\right) $ and its graph by $ \Gamma_{\varphi} $, i.e
		\begin{equation*}
			\Gamma_{\varphi} = \left\{\left(\left(q,p\right),\left(Q_\varphi,P_\varphi\right)\right); \left(q,p\right)\in\RRnn\right\} \subset \RRnn\times \RRnn
		\end{equation*}
		
		\item We consider the product space $ \RRnn\times \RRnn $ with coordinates $ \left(\left(q,p\right),\left(Q,P\right)\right) $ and endow it with the symplectic form $ \left(-\omega_0\right)\oplus\omega_0 $. This space (sometimes called a \textit{twisted product}) is denoted by $ \overline{\RRnn} \times \RRnn $. Note that the diagonal
		\begin{equation*}
			\Delta = \left\{\left(\left(q,p\right),\left(q,p\right)\right);\left(q,p\right)\in\RRnn\right\} \subset \overline{\RRnn} \times \RRnn
		\end{equation*}
		is a Lagrangian submanifold and therefore also the graph of $ \varphi $, since $ \Gamma_{\varphi} = \left(\Id\oplus\varphi\right)\left(\Delta\right) $ is the image of a Lagrangian under a symplectomorphism.
		
		\item Next, we can globally identify $ \overline{\RRnn}\times\RRnn $ with $ T^*\RRnn$ via the symplectomorphism
		\begin{equation*}
			\begin{split}
				&\tau: \overline{\RRnn}\times \RRnn \rightarrow T\RRnn \cong T^*\RRnn \\ 
				&\tau\left(\left(q,p\right),\left(Q,P\right)\right) = \left(\left(Q,p\right),\left(P-p,q-Q\right)\right),
			\end{split}
		\end{equation*}
		where the equivalence $ T\RRnn \cong T^*\RRnn $ is done through the standard scalar product of $ \RRnn $. $ \tau $ sends the diagonal $ \Delta $ to $ L_0 $ and we denote
		\begin{equation*}
			L_\varphi = \tau\left(\Gamma_{\varphi}\right) \subset T^*\RRnn,
		\end{equation*}
		so $ L_\varphi $ is a Lagrangian submanifold of $ T^*\RRnn $. Note that if $\varphi$ is $C^1$-small then $L_\varphi$ is a graph of an exact $1$-form.
		
		\item Finally, since $ \varphi $ is compactly supported, $ \Gamma_{\varphi} $ coincides with $ \Delta $ outside the compact support and thus $ L_\varphi $ coincides with $ L_0 $ outside a compact set, so we may consider $ L_\varphi $ as a subset of the cotangent bundle over the compactification $ \RRnn\cup\left\{\infty\right\} \cong \SS^{2n} $, we keep the notation $ L_\varphi $ for the Lagrangian $ L_\varphi \subset T^*\SS^{2n} $.
	\end{enumerate}
	
	The following is a well known argument so we mention it here without proof (see \cite[example 3.4.14]{McDSal17} for the first part and \cite[remark 9.4.7]{McDSal17}).
	
	\begin{lemma} \label{lemma:hamtolag}
		The mapping $ \varphi \mapsto L_\varphi $ is well defined. Furthermore, $ L_\varphi $ is Hamiltonian isotopic to $ L_0 $ and (non-degenerate) fixed points of $ \varphi $ correspond to (transversal) intersection of $ L_\varphi\cap L_0 $.
	\end{lemma}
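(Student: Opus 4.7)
The plan is to handle the three assertions in the order they are stated, in each case reducing the geometry on $T^{\ast}\SS^{2n}$ to a direct computation on the twisted product $\overline{\RRnn}\times\RRnn$ via the symplectomorphism $\tau$.

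\textbf{Well-definedness.} First I would verify that $\tau$ is a symplectomorphism by a direct calculation: pulling back the canonical $1$-form $\lambda_{\mathrm{can}}=P\,dQ$ on $T^{\ast}\RRnn$ via $\tau$ yields $(P-p)\,dQ+(q-Q)\,dp$, whose exterior derivative agrees with $(-\omega_0)\oplus\omega_0$ after expansion. Hence $L_\varphi=\tau(\Gamma_\varphi)$ is Lagrangian. Compact support of $\varphi$ means that $\Gamma_\varphi$ coincides with $\Delta$ outside a compact set, and since $\tau(\Delta)=L_0$ (the zero section), $L_\varphi$ coincides with $L_0$ outside a compact set of $T^{\ast}\RRnn$, so it extends uniquely and smoothly across the fiber over $\infty$ to give a Lagrangian in $T^{\ast}\SS^{2n}$.

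\textbf{Hamiltonian isotopy.} By assumption there is a compactly supported Hamiltonian $H_t:\RRnn\to\RR$ whose flow $\{\varphi_t\}_{t\in[0,1]}$ joins $\Id$ to $\varphi$. The map $\Id\oplus\varphi_t$ is a Hamiltonian isotopy of $\overline{\RRnn}\times\RRnn$ generated by the compactly supported Hamiltonian $\widetilde H_t(q,p,Q,P)=H_t(Q,P)$, and it carries $\Delta$ to $\Gamma_{\varphi_t}$. Transporting through $\tau$, I obtain a compactly supported Hamiltonian isotopy of $T^{\ast}\RRnn$ carrying $L_0$ to $L_\varphi$; the fact that everything is compactly supported in $T^{\ast}\RRnn$ lets me extend the generating Hamiltonian by zero across the fiber over $\infty\in\SS^{2n}$.

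\textbf{Fixed points vs. intersections.} A point $x\in\RRnn$ satisfies $\varphi(x)=x$ iff $(x,x)\in\Gamma_\varphi\cap\Delta$, and since $\tau$ is a bijection taking $\Delta$ to $L_0$ and $\Gamma_\varphi$ to $L_\varphi$, this gives a natural bijection with $L_\varphi\cap L_0$. For the non-degeneracy/transversality statement I would identify the tangent spaces: at $(x,x)$, $T\Delta$ is the diagonal and $T\Gamma_\varphi$ is the graph of $d\varphi_x$, so $T\Delta+T\Gamma_\varphi$ fills the ambient space precisely when $d\varphi_x-\Id$ is invertible, which is the definition of non-degeneracy. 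Since $\tau$ is a diffeomorphism, this transversality is preserved under pushforward to $T^{\ast}\SS^{2n}$.

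\textbf{Main obstacle.} The only real computation is the pullback formula for $\tau^{\ast}\lambda_{\mathrm{can}}$ and the accompanying linear-algebraic identification of tangent spaces at a fixed point; both are short but need to be set up carefully so that the twisted sign conventions on $\overline{\RRnn}\times\RRnn$ and the identification $T\RRnn\cong T^{\ast}\RRnn$ via the Euclidean metric give a genuine symplectomorphism rather than an anti-symplectomorphism. Everything else is formal once $\tau$ is known to be a symplectomorphism.
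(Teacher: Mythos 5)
The paper does not prove this lemma; it states it as well known and refers the reader to \cite[Example~3.4.14, Remark~9.4.7]{McDSal17}. Your proposed argument is correct and is precisely the standard one those references carry out: verify $\tau$ is a symplectomorphism, push forward the Hamiltonian isotopy $\Id\oplus\varphi_t$ generated by $\widetilde H_t(q,p,Q,P)=H_t(Q,P)$, and identify $T_{(x,x)}\Delta\cap T_{(x,x)}\Gamma_\varphi$ with $\ker(d\varphi_x-\Id)$. One small sign point worth nailing down in the write-up (you flag it yourself as the obstacle): with the convention $\omega_{\mathrm{can}}=-d\lambda_{\mathrm{can}}$ one computes $d\bigl(\tau^*\lambda_{\mathrm{can}}\bigr)=dP\wedge dQ+dq\wedge dp$, so $\tau^*\omega_{\mathrm{can}}=-\bigl(dP\wedge dQ+dq\wedge dp\bigr)=dQ\wedge dP-dq\wedge dp=(-\omega_0)\oplus\omega_0$; it is $-d(\tau^*\lambda_{\mathrm{can}})$, not $d(\tau^*\lambda_{\mathrm{can}})$, that equals the twisted form.
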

	
	\begin{remark}
		The construction in \Cref{lemma:hamtolag} depends on the global identification of $ \overline{\RRnn}\times\RRnn $ with $ T^*\RRnn$, which does not exist for general symplectic manifolds. However, Weinstein's \textit{Lagrangian neighborhood theorem} states that any Lagrangian $ L\subset M $ has a tubular neighborhood in $ M $ which is symplectomorphic to a tubular neighborhood of the $ 0 $-section in $ T^*L $ \cite[theorem 3.4.13]{McDSal17}. In particular, a neighborhood of the diagonal $ \Delta \subset \overline{M}\times M $ is symplectomorphic to a neighborhood of the $ 0 $-section $ L_0\subset T^*M $ so the association of the Lagrangian $ L_\varphi\subset T^*M $ to $ \varphi\in\Ham\left(M,\omega\right) $ also holds in case $ M $ is a general closed symplectic manifold as long as $ \Gamma_\varphi $ is close to $ \Delta $, which is the case for example when $ \varphi $ is close to the identity in the $ C^1 $ topology.
	\end{remark}
	
	\subsection{Generating functions of Lagrangian submanifolds} \label{sec:genfunc}
	\begin{remark}
		The term \textit{generating functions} appears in more than one way in symplectic topology, in different contexts this term has a slightly different meaning and definition (even though all describe the same general idea), see for example \cite[chapter 9]{McDSal17} or \cite[section 48]{Arn89}. Our approach uses the concept of generating function associated to a Lagrangian submanifold in a cotangent bundle (sometimes referred to as a \textit{variational family}, see \cite[section 9.4]{McDSal17}). These generating functions were initially defined on a vector bundle with infinite dimensional fibers but approximation methods revealed the existence of such functions with finite dimensional domains (see for example \cite{Vit92}).
	\end{remark}
	
	Let $ \varphi \in \Ham_c\left(\RRnn,\omega_0\right) $, note that if $ \varphi $ is sufficiently $C^1$-small then its graph $ \Gamma_\varphi $ projects diffeomorphically on the diagonal $ \Delta $ and so the submanifold $ L_\varphi $ is the graph $ \Gamma_\alpha $ of some exact $ 1 $-form $ \alpha $, seen as a section $ \alpha:\RRnn\to T^*\RRnn $ and so it is natural to define a generating function of $ \varphi $ as follows.
	\begin{definition} \label{def:smallgenfunc}
		For $ \varphi\in\Ham_c\left(\RRnn,\omega_0\right) $ such that $ L_\varphi $ is the graph of some $ 1 $-form $ \alpha $, we call the function $ S:\RRnn\to\RR $ which satisfies $ dS=\alpha $ a \textit{generating function} of $ \varphi $.
	\end{definition}
	
	In order to define generating functions for not necessarily $C^1$-small Hamiltonian diffeomorphisms, we follow an idea which goes back to H\"ormander (see \cite{Hor71}) and later developed by Viterbo in \cite{Vit92}, which roughly means that even though a certain Lagrangian $ L\subset T^*\RRnn $ may not be the graph of a $ 1 $-form, increasing the dimension of the function's domain allows one to define a more complicated generating function that recovers  $L$.
	\begin{definition} \label{def:laggenfunc}
		Let $ B $ be a smooth manifold, a function $ S $ defined on the vector bundle $ p:B\times \RR^d \rightarrow B $ ($ p\left(x,\xi\right) = x $) is called a \textit{generating function} of a Lagrangian submanifold $ L\subset T^*B $ if
		\begin{align}
			0&\in\left(\RR^d\right)^* \text{is a regular value of} \frac{\partial S}{\partial \xi}:B\times \RR^d \rightarrow \left(\RR^d\right)^* \\
			L &= \left\{\left(x,v^*\right)\in T^*B; \frac{\partial S}{\partial \xi}\left(x,\xi\right) = 0, \frac{\partial S}{\partial x}\left(x,\xi\right) = v^*\right\}. \label{eq:lagofgenfunc_cond2}
		\end{align}
		In case $L=L_\varphi$ for some Hamiltonian diffeomorphism $\varphi$, we call $S$ the {\it generating function} of $\varphi$.
	\end{definition}
	The first condition implies that the set $ \Sigma_S = \left\{\left(x,\xi\right);\frac{\partial S}{\partial\xi}\left(x,\xi\right) = 0\right\} $ of \textit{fiber critical points} is a smooth submanifold of $ B\times\RR^d $, then we associate to each $ \left(x,\xi\right) \in \Sigma_S $ an element $ v^*_{\left(x,\xi\right)}\in T^*_xB $ (sometimes called the \textit{Lagrange multiplier} of $ \left(x,\xi\right) $) such that
	\begin{align*}
		&v^*_{\left(x,\xi\right)} : T_xB \to \RR \\
		&v^*_{\left(x,\xi\right)}\left(v\right) = dS_{\left(x,\xi\right)}\left(\widehat{v}\right)
	\end{align*}
	where $ \widehat{v} $ is a lift of $ v\in T_xB $ to $ T_{\left(x,\xi\right)}\left(B\times\RR^d\right) $, i.e such that $ dp\left(\widehat{v}\right) = v $. The fact that $ \left(x,\xi\right)\in\Sigma_S $ implies that the map $ \left(x,\xi\right) \mapsto v^*_{\left(x,\xi\right)} $ do not depend on the choice of lift $ \widehat{v} $ and we denote by $ i_S:\Sigma_S\to T^*B $ the map $ i_S\left(x,\xi\right) = \left(x,v^*\left(x,\xi\right)\right) $ whose image is an exact Lagrangian $ i_S\left(\Sigma_S\right) $ generated by $ S $ in the sense of \Cref{def:laggenfunc}.	Moreover, note that just as in the case of \Cref{def:smallgenfunc}, if a function $ S $ generate the Lagrangian $ L $ in the sense of \Cref{def:laggenfunc} then non-degenerate critical points still correspond to transversal intersection of $ L\cap L_0 $, which is an important property of these generating functions, as they relate the Morse homology of $ S $ to periodic orbits and action spectrum of $ \varphi\in\Ham_c\left(\RRnn,\omega_0\right) $ (when $ L=L_\varphi $).
	\begin{remark}
		Since this paper concerns only with $ \Ham_c\left(\RRnn,\omega_0\right) $, we only give the definition for generating functions of Lagrangian submanifolds as functions defined on the trivial bundle $ B\times\RR^d $, but it is worth mentioning that in fact this construction generalizes to functions on fiber bundles $ p:E\to B $ which are not trivial (or even finite dimensional). This general approach allows us to view the \textit{action functional} associated to a Hamiltonian $ H_t:T^*B\to\RR $ as a kind of infinite dimensional generating function defined on the set of paths in $ T^*B $ which starts at the $ 0 $-section (see \cite[example 9.4.8]{McDSal17}). Then, the Lagrangian defined by this generating function is exactly the image of the $ 0 $-section under the Hamiltonian flow generated by $ H_t $. This observation was used by Viterbo \cite{Vit92}, Laudenbach, Sikorav \cite{LaudSik85} and Chaperon \cite{Chap90} to construct finite dimensional generating function by using discrete approximation of the paths on which the action functional operates.
	\end{remark}
	
	Note that If $ S $ is a generating function of $ L\subset T^*B $ defined on the vector bundle $ p:B\times\RR^d\to\RRnn $, then any $ \widetilde{S} $ which is obtained from $ S $ by one of the following operations (which we shall call \textit{basic operations}), is also a generating function of $ L $
	\begin{itemize}
		\item (Addition of a constant) $ \widetilde{S} = S + c $ for some $ c\in\RR $.
		\item (Fiber preserving diffeomorphism) $ \widetilde{S} = S\circ \Phi $ where $ \Phi:B\times\RR^d \rightarrow B\times \RR^d $ satisfies $ p \circ \Phi = p $.
		\item (Stabilization) $ \widetilde{S} = S\oplus\mathcal{Q} : B\times\RR^d \times\RR^{d^\prime}\rightarrow\RR $ where $ \mathcal{Q} $ is a non-degenerate quadratic form.
	\end{itemize}
	Next we follow \cite{Tr94} to define a condition which controls how a generating function behaves at infinity.
	\begin{definition} \label{def:asympquadatinfty}
		A generating function $ S:B\times\RR^d\to\RR $ is called \textit{asymptotically quadratic at infinity} if there exists a function $ \mathcal{Q}:B\times\RR^d\to\RR $ which is a non-degenerate quadratic form in each fiber, such that for each $ x\in B $,
		\begin{equation*}
			\left|\frac{\partial}{\partial \xi}\left(S\left(x,\xi\right)-\mathcal{Q}\left(x,\xi\right)\right)\right| \leq M_x < \infty
		\end{equation*}
	\end{definition}
	\begin{remark}
		In some texts, the asymptotically quadratic at infinity property is called \textit{almost quadratic at infinity} and when the function $ \mathcal{Q} $ is independent of the base variable this generating function is called \textit{``special"}. Th\'eret shows in \cite[24]{Th96} that every generating function can be made special through basic operations.
	\end{remark}
	Note that the three basic operations preserves the condition of being asymptotically quadratic at infinity and we call two generating function $ S_1,S_2 $ of $ L\subset T^*B $ \textit{equivalent}, if they can be made equal after a succession of basic operations. The following theorem was proved by Viterbo in \cite{Vit92} (see also Th\'eret in \cite{Th99}).
	\begin{theorem}[Viterbo's Uniqueness Theorem] \label{thm:gfqiunique}
		Let $ B $ be a closed manifold and let $ \psi^t $ be a Hamiltonian flow in $ T^*B $, $ t\in\left[0,1\right] $. Then all generating functions of $ L = \psi^1\left(L_0\right) $ which are asymptotically quadratic at infinity are equivalent.
	\end{theorem}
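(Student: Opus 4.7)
The plan is to prove the theorem in two broad steps. First, after preliminary basic operations that normalize the fiber dimension and the quadratic form at infinity, I would construct a smooth one-parameter family $\{S_t\}_{t\in[0,1]}$ of GFQIs of $L$ interpolating between $S_0$ and $S_1$. Second, I would use a Moser-type isotopy argument to produce a smooth family of fiber-preserving diffeomorphisms $\Phi_t$ and real constants $c(t)$ realizing $S_t\circ\Phi_t = S_0 + c(t)$; evaluating at $t=1$ then exhibits $S_1$ as the image of $S_0$ under a succession of basic operations.

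For the first step, I would invoke Th\'eret's normalization to assume that $S_0, S_1$ are special, i.e.\ that their asymptotic quadratic forms $\mathcal{Q}_0, \mathcal{Q}_1$ are independent of $x\in B$. After stabilizing each $S_i$ by an auxiliary nondegenerate quadratic form, one can further arrange that both generating functions are defined on the same total space $B\times\RR^d$ and share the same quadratic form $\mathcal{Q}_\infty$ at infinity. Constructing the interpolating family $\{S_t\}$ then amounts to showing that the space of special GFQIs of $L$ with these fixed asymptotic data is path-connected; near a fixed GFQI, the implicit function theorem applied to the defining equation \eqref{eq:lagofgenfunc_cond2} shows that the set of nearby GFQIs of $L$ forms a contractible neighborhood, and a partition-of-unity argument on the parameter interval assembles a global interpolation.

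For the second step, differentiating the desired relation $S_t\circ\Phi_t = S_0 + c(t)$ in $t$ yields the cohomological equation
\begin{equation*}
    dS_t(X_t) + \frac{\partial S_t}{\partial t} = c'(t),
\end{equation*}
for a vertical (i.e.\ $X_t\in\ker dp$) vector field $X_t$ on $B\times\RR^d$ whose flow defines $\Phi_t$. Since every $S_t$ generates the same Lagrangian $L$, the restriction $\partial_t S_t|_{\Sigma_{S_t}}$ is locally constant in the base direction (the primitives of the tautological form on $L$ differ only by constants on each connected component); absorbing this constant into $c(t)$, the function $\partial_t S_t$ vanishes on $\Sigma_{S_t}$. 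By the regular-value hypothesis in \Cref{def:laggenfunc}, $\partial_t S_t$ then lies in the ideal generated by the components of $\partial S_t/\partial\xi$ in a neighborhood of $\Sigma_{S_t}$, and one can solve for a vertical $X_t$ there; away from $\Sigma_{S_t}$ the equation is elementary to solve on each fiber, and a partition of unity on $B\times\RR^d$ produces a global $X_t$.

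The main obstacle is ensuring that $X_t$ is complete, so that its time-$1$ flow is an honest fiber-preserving diffeomorphism of $B\times\RR^d$, and this is precisely where the asymptotic quadraticity at infinity is essential. Outside a compact set of each fiber, $|\partial_\xi S_t|$ grows linearly in $|\xi|$ by nondegeneracy of $\mathcal{Q}_\infty$ combined with the bound of \Cref{def:asympquadatinfty}; together with analogous estimates for $\partial_t S_t$ inherited by the family $\{S_t\}$, this forces $X_t$ to grow at most linearly in $|\xi|$, uniformly in $x\in B$ thanks to the compactness of $B$. Completeness of the flow and Lipschitz continuity of $\Phi_t$ then follow, yielding the required succession of stabilizations, fiber-preserving diffeomorphisms, and constant shifts taking $S_1$ to $S_0$.
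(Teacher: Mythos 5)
The paper does not prove this statement; it cites it to Viterbo \cite{Vit92} and Th\'eret \cite{Th99}, so there is no internal proof to compare against and I can only assess your argument on its own terms.

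Your second step is sound and is indeed the engine of the standard proof: given a smooth path $\{S_t\}$ of GFQIs all generating the same $L$, the Moser-type argument (constancy of $\partial_t S_t$ on $\Sigma_{S_t}$ after absorbing a constant, solvability of the division problem near $\Sigma_{S_t}$ using the regular-value hypothesis, and completeness of the vertical vector field from the linear growth in $|\xi|$ of $|\partial_\xi S_t|$ supplied by asymptotic quadraticity and compactness of $B$) produces the required fiber-preserving isotopy. The normalizations in step one (make the functions special, stabilize to a common fiber and common quadratic form at infinity) are also fine.

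The gap is in the construction of the interpolating path itself. Local contractibility of the set of GFQIs of $L$ with fixed asymptotic data, established by an implicit-function argument, plus a partition of unity "on the parameter interval" does not give path-connectedness of that set: there is no parameter interval to partition until you already have a path, and local path-connectedness of a space never implies global connectedness, which is precisely the content you need. You also cannot fall back on the straight-line family $(1-t)S_0+tS_1$: the fiber-critical loci $\Sigma_{S_0}$ and $\Sigma_{S_1}$ are in general entirely different subsets of $B\times\RR^d$, so the convex combination need not satisfy the regular-value condition nor generate $L$, and there is nothing nearby to perturb it to. Notice that your argument never uses the hypothesis $L=\psi^1(L_0)$, which should be a warning sign. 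The Viterbo--Th\'eret proof uses this hypothesis essentially: one runs the isotopy backwards, invoking the Sikorav--Chekanov existence (homotopy-lifting) theorem to extend each $S_i$ to a continuous family $S_i^t$ of GFQIs of $\psi^{1-t}(L)$ terminating in GFQIs $S_0^1,S_1^1$ of the zero section $L_0$; one proves a base case that all GFQIs of $L_0$ are equivalent to a stabilized quadratic form; and then a Moser argument for paths of Lagrangians propagates that equivalence forward along the isotopy to $S_0\sim S_1$. Both the lifting lemma and the zero-section base case are absent from your proposal, and without them the argument does not close.
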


	A key feature of the following definition is that one is able to approximate these kinds of generating functions numerically.
	\begin{definition}
		A generating function $ S:B\times\RR^d\to\RR $ is called \textit{quadratic at infinity} if $ S\left(x,\xi\right) = \mathcal{Q}\left(\xi\right) $ outside a compact set $K \subset B\times\RR^d $, where $ \mathcal{Q}:\RR^d\to\RR $ is a non-degenerate quadratic form. We shall abbreviate the name of such functions as \textit{GFQI}. The index of $ \mathcal{Q} $ is called the \textit{quadratic index} of $ S $ and in case $ B $ is the euclidean space, a pair of radii $ R_b,R_f > 0 $ such that $K \subset \BB_{R_b}\times\BB_{R_f} $ are called \textit{quadratic radii} of $ S $.
	\end{definition}
	
	It holds that any asymptotically quadratic at infinity generating function can be made a GFQI using basic operations (see \cite{Th96}), however the proof involves a more theoretic argument and since we require an explicit expression we later use a slightly different approach in order to turn a generating function which is asymptotically quadratic at infinity (with some extra conditions) into a GFQI. Later on we show how one can construct a GFQI for $ \varphi\in\Ham_c\left(\RRnn,\omega_0\right) $ but it's worth mentioning at this point that their existence in general is a result by Laudenbach and Sikorav (see \cite{LaudSik85}, \cite{Sik87}).
	
	These steps conclude the construction of a GFQI $ S:\RRnn\times\RR^d\to\RR $ associated to a compactly supported Hamiltonian diffeomorphism $ \varphi $ of $ \RRnn $, since the uniqueness result requires the base manifold to be compact, we extend $ S $ to the compactification $ \left(\RRnn\cup\left\{\infty\right\}\right)\times\RR^d $ in a natural way by setting $ S\left(\infty,\xi\right) = \mathcal{Q}\left(\xi\right) $. We denote $ \RRnn\cup\left\{\infty\right\} $ by $ \SS^{2n} $ from now on.
	\begin{remark}
		An interesting property of this function is that non-degenerate critical points correspond to transversal intersections of $ L_\varphi $ with $ L_0 $, which in turn correspond to fixed points of $ \varphi $, moreover, after normalization we also get that critical values of $ S $ correspond to the action spectrum of $ \varphi $. These properties allows us to define a meaningful invariant for $ \varphi $ by considering the Morse homology of $ S $, which is described in the next subsection.
	\end{remark}
	
	Let us finish this subsection by explicitly defining a generating function $ S $ for $ L_\varphi $ where $ \varphi $ is the composition of several $ \varphi_j\in\Ham_c\left(\RRnn,\omega\right) $ all of which are $ C^1 $ small, in terms of the generating functions $ S_j $ associated to $ \varphi_j $. This is done using a composition formula which is attributed to Chekanov and appears in \cite{Chap90}.
	\begin{proposition}[Chekanov's formula] \label{prop:compformula}
		Let $ \varphi,\psi\in\Ham\left(\RRnn,\omega_0\right) $ and assume $ L_\varphi $ is generated by $ S_\varphi:\RRnn\times\RR^d\to\RR $ and $ L_\psi $ is generated by $ S_\psi:\RRnn\to\RR $. Then, $ L_{\psi\circ\varphi} $ is generated by
		\begin{align*}
			&S_\varphi\#S_\psi:\RRnn\times\left(\RR^d\times\RRnn\right) \to \RR \\
			&S_\varphi\#S_\psi\left(\left(Q^\prime,p\right),\left(\xi,Q,P\right)\right) = S_\varphi\left(\left(Q,p\right),\xi\right) + S_\psi\left(Q^\prime,P\right) + \left\langle P-p,Q^\prime-Q\right\rangle.
		\end{align*}
		Where we denote $ \varphi\left(q,p\right) = \left(Q,P\right) $ and $ \psi\left(Q,P\right) = \left(Q^\prime,P^\prime\right) $.
	\end{proposition}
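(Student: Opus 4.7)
The plan is to verify directly that $S := S_\varphi \# S_\psi$ satisfies both conditions of \Cref{def:laggenfunc}, treating $(Q',p)$ as base variables and $(\xi,Q,P)$ as fiber variables. Recall that under the symplectomorphism $\tau$, the Lagrangians $L_\varphi$, $L_\psi$, $L_{\psi\circ\varphi}$ have base/fiber coordinates $\bigl((Q,p);(P-p,q-Q)\bigr)$, $\bigl((Q',P);(P'-P,Q-Q')\bigr)$, $\bigl((Q',p);(P'-p,q-Q')\bigr)$ respectively; and that $S_\varphi$ and $S_\psi$ generating $L_\varphi$ and $L_\psi$ gives $\partial_Q S_\varphi = P-p$, $\partial_p S_\varphi = q-Q$ on $\Sigma_{S_\varphi}$, and $\partial_{Q'}S_\psi = P'-P$, $\partial_P S_\psi = Q-Q'$ everywhere.

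The main step is the direct computation of the five partial derivatives:
\begin{align*}
\partial_\xi S &= \partial_\xi S_\varphi, & \partial_Q S &= \partial_Q S_\varphi - (P-p), & \partial_P S &= \partial_P S_\psi + (Q'-Q),\\
\partial_{Q'} S &= \partial_{Q'}S_\psi + (P-p), & \partial_p S &= \partial_p S_\varphi - (Q'-Q).
\end{align*}
The three fiber equations $\partial_\xi S = \partial_Q S = \partial_P S = 0$ place $(Q,p,\xi)\in\Sigma_{S_\varphi}$ with $\partial_Q S_\varphi = P-p$, hence $\varphi(q,p) = (Q,P)$ for $q := Q+\partial_p S_\varphi$; and $\partial_P S_\psi = Q-Q'$, hence $\psi(Q,P) = (Q',P')$ for $P' := P+\partial_{Q'}S_\psi$. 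So $(q,p) \mapsto (Q',P')$ lies in $\Gamma_{\psi\circ\varphi}$, and conversely every graph point arises in this way. Substituting yields $\partial_{Q'}S = (P'-P)+(P-p) = P'-p$ and $\partial_p S = (q-Q)-(Q'-Q) = q-Q'$, which are exactly the fiber coordinates of $L_{\psi\circ\varphi}$ over $(Q',p)$. This verifies condition (\ref{eq:lagofgenfunc_cond2}).

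The remaining step, and the main technical hurdle, is to verify that $0$ is a regular value of the fiber derivative $(\partial_\xi,\partial_Q,\partial_P)S$. The key additional input is that $L_\psi$ being a graph in the base coordinates $(Q',P)$ forces $I+\partial^2_{Q'P}S_\psi$ to be invertible: the equation $Q = Q' + \partial_P S_\psi(Q',P)$ must determine $Q'$ smoothly from $(Q,P)$, and differentiating in $Q'$ produces precisely that matrix. Given this, surjectivity of the Jacobian at zeros follows by block manipulations: first use the regularity of $\partial_\xi S_\varphi$ to realize any prescribed increment $d(\partial_\xi S)$ via variations in $(\xi, Q, p)$, then correct the resulting by-products in $d(\partial_Q S)$ and $d(\partial_P S)$ by variations in $(P, Q')$, exploiting $\partial_P\partial_Q S = -I$ and the invertibility of $\partial_{Q'}\partial_P S = I+\partial^2_{Q'P}S_\psi$. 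Since $\partial_\xi S$ depends on neither $P$ nor $Q'$, these corrections do not disturb the $\partial_\xi S$ component, so the overall differential is surjective.
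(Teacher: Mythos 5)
The paper does not actually prove this proposition --- it attributes the formula to Chekanov and refers the reader to \cite{Chap90} --- so there is no in-text proof to compare against, and your argument stands or falls on its own. It stands. The derivative bookkeeping is right: $\partial_\xi S$, $\partial_Q S$, $\partial_P S$ vanish precisely when $(Q,p,\xi)\in\Sigma_{S_\varphi}$ with $\partial_Q S_\varphi = P-p$ (so $\varphi(q,p)=(Q,P)$ for $q := Q + \partial_p S_\varphi$) and $\partial_P S_\psi(Q',P)=Q-Q'$ (so $\psi(Q,P)=(Q',P')$ for $P' := P + \partial_{Q'}S_\psi$), whence $\partial_{Q'}S = P'-p$ and $\partial_p S = q-Q'$, which are exactly the fiber coordinates of $L_{\psi\circ\varphi}$ over $(Q',p)$ under $\tau$; surjectivity onto $L_{\psi\circ\varphi}$ follows since $S_\varphi$ generates all of $L_\varphi$ and $S_\psi$ has no auxiliary variables. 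The only non-routine ingredient is the invertibility of $I+\partial^2_{Q'P}S_\psi$, which you correctly trace back to $L_\psi$ being a graph: the base projection $(Q,P)\mapsto(Q'(Q,P),P)$ is a diffeomorphism, its inverse is $(Q',P)\mapsto(Q'+\partial_P S_\psi(Q',P),P)$, and differentiating the first component in $Q'$ gives exactly $I+\partial^2_{Q'P}S_\psi$. With that in hand, your triangular elimination --- realize the $\partial_\xi S$ increment using $(p,\xi,Q)$ and the regularity hypothesis on $S_\varphi$, then correct $\partial_Q S$ using $P$ via the $-I$ block, then correct $\partial_P S$ using $Q'$, noting that each newly introduced variable leaves the previously adjusted components untouched --- shows $0$ is a regular value of the fiber derivative and completes the verification that $S_\varphi\# S_\psi$ generates $L_{\psi\circ\varphi}$.
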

	
	\begin{remark}
		Note that one may associate a generating function to every $ \varphi\in\Ham_c\left(\RRnn,\omega_0\right) $, since $ \varphi = \varphi^1_H $ where $ \varphi^t_H $ is a Hamiltonian flow we may take a sequence of times $ 0 < t_1 < \dots < t_N = 1 $ and break $ \varphi $ into the composition
		\begin{equation*}
			\varphi = \left(\varphi^{t_N}_H\circ\left(\varphi^{t_{N-1}}_H\right)^{-1}\right)\circ\dots\circ\left(\varphi^{t_j}_H\circ\left(\varphi^{t_{j-1}}_H\right)^{-1}\right)\circ\dots\circ\left(\varphi^{t_2}_H\circ\left(\varphi^{t_1}_H\right)^{-1}\right)\circ\varphi^{t_1}_H,
		\end{equation*}
		so when $ N $ is large enough we get that $ \varphi_j = \varphi^{t_j}_H\circ\left(\varphi^{t_{j-1}}_H\right)^{-1} $ is $ C^1 $ close enough to the identity so that there exists a corresponding generating function $ S_j $ without auxiliary variables (see \Cref{prop:smallhamtogenfunc} below).
	\end{remark}

	\subsection{Generating function homology} \label{sec:GFhomology}
	Let $ \varphi\in \Ham_c\left(\RRnn,\omega_0\right) $, then $ \varphi $ has a (unique, up to equivalence) GFQI, $ S:E\rightarrow\RR $ where $ E = \SS^{2n}\times \RR^d $ and we consider the homology of sublevel sets of $ S $ as in Morse theory. Though $ S $ is defined on a non compact space, the quadratic at infinity condition allows us to define a relative version for the homology of sublevel sets due to the nature of gradient flow arguments in Morse theory.
	\begin{definition}
		Let $ L\subset T^*B $ be a Lagrangian submanifold which coincides with the $ 0 $-section $ L_0 $ outside a compact set of $ B $ and assume $ S $ generates $ L $, then we call $ S $ a \textit{normalized} generating function of $ L $ if the critical value of $ S $ associated to the intersection of $ L\cap L_0 $ outside this compact set is $ 0 $.
	\end{definition}
	Following \cite{Tr94} we give the next definition.
	\begin{definition} \label{def:GFhomology}
		For $ \varphi\in\Ham_c\left(\RRnn,\omega_0\right) $, let $ S:\RRnn\times\RR^d\to\RR $ be a normalized GFQI with quadratic part $ \mathcal{Q} $, extend $ S $ to $ E = \SS^{2n}\times\RR^d $ and let $ 0<a<b\leq\infty $ such that $ a,b $ are not critical values of $ S $. The \textit{GF-homology} groups of $ \varphi $ with respect to $ \left(a,b\right] $ are
		\begin{equation*}
			G_\ast^{\left(a,b\right]}\left(\varphi\right) = H_{\ast+i}\left(E^b,E^a\right),
		\end{equation*}
		where $  E^c = \left\{\left(x,\xi\right)\in\SS^{2n}\times\RR^d; S\left(x,\xi\right) \leq c\right\} $ and $ i $ is the index of $ \mathcal{Q} $.
	\end{definition}
	In this paper we use homology with coefficients in $ \ZZ_2 $ and the uniqueness given by \Cref{thm:gfqiunique} implies that $ G_\ast^{\left(a,b\right]}\left(\varphi\right) $ is well defined, as proved in \cite[lemma 3.6]{Tr94}:
	\begin{proposition}
		$ G_\ast^{\left(a,b\right]}\left(\varphi\right) $ is independent on the choice of normalized GFQI for $ \varphi $.
	\end{proposition}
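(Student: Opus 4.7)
The plan is to invoke Viterbo's Uniqueness Theorem (\Cref{thm:gfqiunique}) and then verify that each of the three basic operations preserves the groups $G_\ast^{(a,b]}$, with the shift by the quadratic index in \Cref{def:GFhomology} designed precisely to compensate for the effect of stabilization on degrees.

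Let $S_1$ and $S_2$ be two normalized GFQIs of $\varphi$, with quadratic parts $\mathcal{Q}_1,\mathcal{Q}_2$ of indices $i_1,i_2$. By \Cref{thm:gfqiunique} they are equivalent, so there is a finite chain of basic operations taking $S_1$ to $S_2$ (after possibly stabilizing both to a common fiber dimension). I would then treat the three basic operations in turn. For addition of a constant $\widetilde{S}=S+c$, the sublevel sets translate by $c$, but the normalization condition $S_j(\infty,0)=0$ together with $\mathcal{Q}_j(0)=0$ pins down the constants at both ends of the equivalence chain, so the net additive shift between normalized GFQIs is forced to vanish. For a fiber-preserving diffeomorphism $\widetilde{S}=S\circ\Phi$, the map $\Phi$ is a self-homeomorphism of $\SS^{2n}\times\RR^d$ that carries $\widetilde{E}^t$ onto $E^t$ for every $t$, so the pairs are homeomorphic and the quadratic index is unchanged.

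The core step is stabilization, $\widetilde{S}(x,\xi,\eta)=S(x,\xi)+\mathcal{Q}'(\eta)$ with $\mathcal{Q}'$ non-degenerate of index $i'$. The new quadratic index is $i+i'$, and one must prove
\[
H_{\ast+i+i'}\bigl(\widetilde{E}^b,\widetilde{E}^a\bigr)\;\cong\;H_{\ast+i}\bigl(E^b,E^a\bigr).
\]
Splitting $\RR^{d'}=V_-\oplus V_+$ with $\mathcal{Q}'=-|\eta_-|^2+|\eta_+|^2$, the linear homotopy $\eta_+\mapsto t\eta_+$, $t\in[1,0]$, decreases $\mathcal{Q}'$ and hence $\widetilde{S}$, yielding a strong deformation retraction of $\widetilde{E}^t$ onto $\{(x,\xi,\eta_-):S(x,\xi)-|\eta_-|^2\le t\}$. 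A relative Thom-isomorphism argument over the Euclidean negative subspace $V_-$ then identifies the relative homology of this pair with $H_{\ast-i'}$ of $(E^b,E^a)$, precisely cancelling the extra $i'$ in the degree shift.

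The main obstacle is making the stabilization step rigorous despite the non-compactness of the fibers: the deformation retraction and the Thom-isomorphism identification must be carried out carefully so that excision is valid and no "critical behavior at infinity" is absorbed or lost. The GFQI condition --- that outside a compact set $\widetilde{S}$ literally coincides with a quadratic form --- provides the uniform control required; with this one can localize the argument to a relative compact pair, after which the standard Morse-theoretic reasoning in the style of \cite{Tr94,Vit92,Th99} applies verbatim.
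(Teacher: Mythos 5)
The paper gives no argument for this proposition; it invokes \cite[lemma 3.6]{Tr94} directly, so your blind sketch is supplying the content that the paper delegates to a citation. Your strategy --- invoke Viterbo's uniqueness theorem (\Cref{thm:gfqiunique}) to reduce to checking invariance of $G_\ast^{(a,b]}$ under the three basic operations, and absorb the stabilization shift via the grading offset by the quadratic index built into \Cref{def:GFhomology} --- is the standard route and is essentially what Traynor does. The constant and fiber-preserving cases are handled correctly, and the degree arithmetic for stabilization ($i\mapsto i+i'$, cancelled by the shift) is right.

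The step you call a ``relative Thom isomorphism over $V_-$'' is doing more work than the name suggests. After retracting out $\eta_+$, the pair you must analyze is
\begin{equation*}
A^t=\left\{\left(x,\xi,\eta_-\right)\,;\,S\left(x,\xi\right)-\left|\eta_-\right|^2\le t\right\},
\end{equation*}
whose fiber over a base point $\left(x,\xi\right)$ is all of $V_-$ where $S\left(x,\xi\right)\le t$, and the complement of an open ball where $S\left(x,\xi\right)>t$. This is not a disk bundle (or complement of one) over a fixed base, so no off-the-shelf Thom isomorphism applies. The conclusion $H_\ast\left(A^b,A^a\right)\cong H_{\ast-i'}\left(E^b,E^a\right)$ is nevertheless correct; proving it requires either a K\"unneth-type argument exploiting that $\left(\left\{\mathcal{Q}'\le r\right\},\left\{\mathcal{Q}'\le s\right\}\right)$ for $s<0<r$ has relative $\ZZ_2$-homology concentrated in degree $i'$, or a direct Morse-theoretic excision argument as carried out in \cite{Tr94,Vit92}. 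You correctly flag this non-compactness/excision issue as the main obstacle, so I would rate the attempt as a sound blueprint with the crucial stabilization isomorphism asserted but not established.
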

	\begin{remark}
		The groups $ G_\ast^{\left(a,b\right]}\left(\varphi\right) $ are indeed invariant to conjugation, i.e for $ \psi\in\Ham_c\left(\RRnn,\omega_0\right) $ there is an induced isomorphism
		\begin{equation*}
			\psi^*:G_\ast^{\left(a,b\right]}\left(\psi\varphi\psi^{-1}\right) \to G_\ast^{\left(a,b\right]}\left(\varphi\right).
		\end{equation*}
		Though this is not the focus of this paper, its worth noting that Viterbo used this invariance together with a partial order on $ \Ham_c\left(\RRnn,\omega_0\right) $ (which translates to partial order of the generating functions) to define Symplectic homology groups associated to open sets in $ \RRnn $ which is invariant under symplectic transformations, further details can be found in \cite{Vit92} and \cite{Tr94}. We give this remark to emphasize a motivation of being able to calculate $ G_\ast^{\left(a,b\right]}\left(\varphi\right) $ explicitly.
	\end{remark}
	
	\paragraph[GF homology inv' to lower value]{}
	Recall that in classical Morse theory, two sublevel sets $ f^{-1}\left(\left(-\infty,a\right]\right) $ and $f^{-1}\left(\left(-\infty,b\right]\right) $ of a Morse function $ f:M\to\RR $ such that $ f^{-1}\left(\left[a,b\right]\right) $ is compact and contains no critical points are diffeomorphic. Roughly the same argument applies in the case of a GFQI even though the function is not compactly supported, if we consider $ a^\prime < a $ such that $ S^{-1}\left(\left[a^\prime,a\right]\right) $ lies entirely inside the region where $ S $ is a quadratic form $ \mathcal{Q} $, then even though $ S^{-1}\left(\left[a^\prime,a\right]\right) $ is not compact, it is given explicitly by $ \mathcal{Q}^{-1}\left(\left[a^\prime,a\right]\right) $ and so for general $ a^\prime < a $ we split $ S^{-1}\left(\left[a^\prime,a\right]\right) $ into the quadratic part and the non-quadratic part (which is compactly supported) so the following holds.
	\begin{lemma} \label{lemma:deformrelativesmallvalues}
		Let $ \varphi\in\Ham_c\left(\RRnn,\omega_0\right) $ and let $ a^\prime < a $, if the interval $ \left[a^\prime,a\right] $ does not contain any spectral value of $ \varphi $ (i.e the action of fixed points of $ \varphi $), then
		\begin{equation*}
			G_\ast^{\left(a,b\right]}\left(\varphi\right) = G_\ast^{\left(a^\prime,b\right]}\left(\varphi\right).
		\end{equation*}
	\end{lemma}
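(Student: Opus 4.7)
The plan is to establish the vanishing $H_*(E^a, E^{a'}) = 0$, from which the long exact sequence of the triple $(E^b, E^a, E^{a'})$ immediately yields $H_{*+i}(E^b, E^{a'}) \cong H_{*+i}(E^b, E^a)$, which is the asserted equality of GF-homology groups. This vanishing will follow by exhibiting a strong deformation retraction of $E^a$ onto $E^{a'}$.

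By the generating-function correspondence recalled in \Cref{sec:genfunc}, critical points of the normalized GFQI $S$ correspond to fixed points of $\varphi$ (including the critical submanifold at $\xi=0$ outside the compact support, of critical value $0$), and their critical values coincide with the spectral values of $\varphi$. The hypothesis therefore guarantees that $S^{-1}([a',a])$ contains no critical points of $S$. On a compact domain this would already give the retraction via the negative gradient flow. The main obstacle here is that $\SS^{2n}\times\RR^d$ is non-compact in the fiber direction, so gradient trajectories could a priori fail to be complete or escape to infinity before reaching level $a'$.

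To overcome this, I would use the splitting foreshadowed in the paragraph preceding the statement. Fix a compact set $K \subset \SS^{2n}\times\RR^d$ outside of which $S(x,\xi) = \mathcal{Q}(\xi)$ for the non-degenerate quadratic part. Outside $K$ the negative gradient flow of $S$ reduces to the linear flow of $-\nabla \mathcal{Q}$ in the fibers, which is complete, preserves each level slab of $\mathcal{Q}$, and strictly decreases $\mathcal{Q}$; since by hypothesis $[a',a]$ avoids the critical value $0$ of $\mathcal{Q}$, the norm $|\nabla \mathcal{Q}|$ is bounded below on $\mathcal{Q}^{-1}([a',a])$ (indeed $|\nabla \mathcal{Q}(\xi)|$ grows like $|\xi|$ for a non-degenerate quadratic form, and $|\xi|$ is bounded below on this annular level slab). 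Inside $K$ the standard Morse-theoretic gradient-like construction produces a vector field along which $S$ strictly decreases on the critical-free compact set $S^{-1}([a',a])\cap K$.

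Gluing these two pieces via a smooth bump function supported in a collar of $\partial K$ and rescaling, I obtain a complete vector field $X$ on $\SS^{2n}\times\RR^d$ satisfying $dS(X) = -1$ on a neighborhood of $S^{-1}([a',a])$ and $X = 0$ on $E^{a'}$. The time-$(a-a')$ flow of $X$, truncated so that each trajectory stops upon crossing level $a'$, gives the required strong deformation retraction of $E^a$ onto $E^{a'}$. The only genuine difficulty, namely non-compactness of the fibers, is thus entirely absorbed by the explicit linear structure of the gradient flow of $\mathcal{Q}$ outside $K$.
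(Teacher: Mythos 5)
Your argument is correct and reaches the same key technical insight as the paper, namely that the non-compactness of the fibers is harmless because $S$ coincides with the quadratic form $\mathcal{Q}$ outside a compact set, so a rescaled gradient-like vector field is complete. You package it a bit differently in two places. First, you reduce the conclusion to the vanishing $H_*(E^a,E^{a'})=0$ via the long exact sequence of the triple $(E^b,E^a,E^{a'})$ and then produce a strong deformation retraction of $E^a$ onto $E^{a'}$; the paper instead constructs a homotopy of pair maps $r_t\colon(E^b,E^{a'})\to(E^b,E^a)$ directly and uses $r_1$ to push $E^{a'}$ up to $E^a$, concluding by the homotopy invariance of relative homology. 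Second, you establish completeness of the vector field by observing that it is bounded (since $|\nabla\mathcal{Q}|$ is bounded away from zero on the annular slab $\mathcal{Q}^{-1}([a',a])$, using that $0\notin[a',a]$ by normalization), whereas the paper compactifies the total space to $\SS^{2n}\times\SS^d$, extends $\rho\nabla S$ by zero at $\infty$, and invokes the standard fact that a compactly supported vector field generates a global flow. Both routes buy the same thing; the triple-LES route is slightly leaner bookkeeping, while the paper's compactification is reused almost verbatim in the later \Cref{lemma:compdomforG^b}, so it avoids introducing two separate completeness arguments.
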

	
	\begin{proof}
		Let $ S:\SS^{2n}\times\RR^d\to\RR $ be a normalized GFQI of $ \varphi $ with quadratic part $ \mathcal{Q}:\RR^d\to\RR $, recall that $ G_\ast^{\left(a,b\right]}\left(\varphi\right) = H_{\ast+i}\left(E^b,E^a\right) $ where $ E^c $ denotes the sub-level set $ S $ with value $ c\in\RR $ and $ E = \SS^{2n}\times\RR^d $. Let $ a^\prime < a < b $ such that the interval $ \left[a^\prime,a\right] $ does not intersect the action spectrum of $ \varphi $, since the action spectrum of $ \varphi $ is equal to the set of critical values of $ S $ we get that $ S^{-1}\left(\left[a^\prime,a\right]\right) $ does not contain any critical points, we want to define a map that takes the pair $ \left(E^b,E^{a^\prime}\right) $ to the pair $ \left(E^b,E^a\right) $ and is homotopic to the identity through maps that preserves $ E^a $. We follow the classical arguments (see \cite[section 3]{Mil68}) of Morse theory and define a $ 1 $-parameter group of diffeomorphisms which takes $ E^{a^\prime} $ to $ E^a $, then we use this group to define the desired homotopy.
		
		Note that the Morse theory arguments require the set $ S^{-1}\left(\left[a^\prime,a\right]\right) $ to be compact which is not the case here since this set is unbounded on the fibers. So, we consider a compactification of $ E $ as the space $ \overline{E} = \SS^{2n}\times\left(\RR^d\cup\left\{\infty\right\}\right) \cong \SS^{2n}\times\SS^d $, even though the function $ S $ cannot be continuously extended to $ \overline{E} $, it is equal to $ \mathcal{Q} $ outside a compact set of $ E $ so the vector field $ \frac{\nabla S}{\left|\nabla S\right|^2} $ (defined away from critical points of $ S $) is in fact $ \frac{\nabla\mathcal{Q}}{\left|\nabla\mathcal{Q}\right|^2} $ outside a compact set and since $ \mathcal{Q} $ is a non-degenerate quadratic form, this vector field can be smoothly extended to $ \SS^{2n}\times\left\{\infty\right\} $ by $ 0 $. Now, the set $ S^{-1}\left(\left[a^\prime,a\right]\right)\cup\left\{\infty\right\} $ is compact, we denote by $ K $ a compact neighborhood of $ S^{-1}\left(\left[a^\prime,a\right]\right)\cup\left\{\infty\right\} $ and consider a smooth function $ \rho:\overline{E}\to\RR $ such that
		\begin{equation*}
			\rho\left(x,\xi\right) = \begin{cases}
				\frac{1}{\left|\nabla S\left(x,\xi\right)\right|^2} & \left(x,\xi\right)\in S^{-1}\left(\left[a^\prime,a\right]\right) \\
				0 & \xi = \infty\text{ or }\left(x,\xi\right)\notin K.
			\end{cases}
		\end{equation*}
		The vector field $ \rho\nabla S $ is then well defined on $ \overline{E} $ and vanishes outside a compact set, so it generates a flow $ \phi_t:\overline{E}\to\overline{E} $ such that $ \frac{\diff}{\diff t}\big\vert_{t_0}\phi_t\left(x,\xi\right) = \rho\left(\phi_{t_0}\left(x,\xi\right)\right)\nabla S\left(\phi_{t_0}\left(x,\xi\right)\right) $ (see \cite[lemma 2.4]{Mil68}). We define a map $ r_t:\left[0,1\right]\times E^b\to E^b $ by
		\begin{equation*}
			r_t\left(x,\xi\right) = \begin{cases}
				\left(x,\xi\right) & S\left(x,\xi\right)\geq a \\
				\phi_{t\left(a-S\left(x,\xi\right)\right)} & S\left(x,\xi\right)\leq a,
			\end{cases}
		\end{equation*}
		then since $ \frac{\diff}{\diff t}\big\vert_{t_0}S\left(\phi_t\left(x,\xi\right)\right) = 1 $ whenever $ \phi_{t_0}\left(x,\xi\right)\in S^{-1}\left(\left[a^\prime,a\right]\right) $, the curves $ \phi_{t\left(a-S\left(x\xi\right)\right)}\left(x,\xi\right) $ stay inside $ S^{-1}\left(\left[a^\prime,a\right]\right) $ for $ t\in\left[0,1\right] $ and $ r_1\left(E^{a^\prime}\right) = E^a $. Note that $ r_t\left(E^a\right)\subset E^a $ for all $ t\in\left[0,1\right] $ so the map $ r_t $ is our desired homotopy between $ r_0=\Id:\left(E^b,E^{a^\prime}\right)\to\left(E^b,E^a\right) $ and $ r_1:\left(E^b,E^{a^\prime}\right)\to\left(E^b,E^a\right) $ therefore they induce the same map on relative homology $ \left(\Id\right)_* = \left(r_1\right)_*:H_{\ast+i}\left(E^b,E^{a^\prime}\right)\to H_{\ast+i}\left(E^b,E^{a}\right) $, thus
		\begin{equation*}
			H_{\ast+i}\left(E^b,E^{a^\prime}\right) = \left(\Id\right)_*\left(H_{\ast+i}\left(E^b,E^{a^\prime}\right)\right)=\left(r_1\right)_*\left(H_{\ast+i}\left(E^b,E^{a^\prime}\right)\right) = H_{\ast+i}\left(E^b,E^{a}\right),
		\end{equation*}
		which concludes the proof.
	\end{proof}
	
	Finally, $ S $ is a GFQI so it has quadratic radii $ R_b,R_f > 0 $ such that $ S $ is equal to a non-degenerate quadratic form $ \mathcal{Q} $ outside $ \BB_{R_b}\times\BB_{R_f} \subset\RRnn\times\RR^d $ (and recall that $ \SS^{2n} = \RRnn\cup\left\{\infty\right\} $). We compose $ S $ with a linear fiber preserving diffeomorphism such that in the new coordinates $ \left(\xi^-,\xi^+\right) = \left(\xi_1^-,\dots,\xi_i^-,\xi_{i+1}^+,\dots,\xi_d^+\right) $ of $ \RR^d $ we have
	\begin{equation*}
		\mathcal{Q}\left(\xi\right) = -\left(\xi_1^-\right)^2-\dots-\left(\xi_i^-\right)^2+\left(\xi_{i+1}^+\right)^2+\dots+\left(\xi_d^+\right)^2,
	\end{equation*}
	where as before, $ i $ is the quadratic index of $ S $ (which is the index of $ \mathcal{Q} $) and we call the function $ S $ in these new coordinated a \textit{diagonalized GFQI}. We may also assume that $ R_f $ is large enough so that
	\begin{equation} \label{eq:Rflarge}
		-R_f^2 = \min_{\left|\xi\right| = R_f} Q\left(\xi\right) < \min_{\left(x,\xi\right)\in\BB_{R_b}\times\BB_{R_f}} S\left(x,\xi\right),
	\end{equation}
	or in other words, that the radius along the fibers is large enough such that the sublevel sets $ E^a $ for $ a \leq -R_f^2 $ do not intersect $ \BB_{R_b}\times\BB_{R_f} $. According to \Cref{lemma:deformrelativesmallvalues}, the groups $ G_\ast^{\left(a,b\right]}\left(\varphi\right) $ are all the same for such $ a $ (and $ b $ is fixed) so we denote by $ E^{-\infty} $ the set $ E^a $ for $ a \leq -R_f^2 $ and accordingly
	\begin{equation} \label{eq:G^b}
		G_\ast^{\left(-\infty,b\right]}\left(\varphi\right) = H_{\ast+i}\left(E^b,E^{-\infty}\right).
	\end{equation}

	\subsection{Generating function barcode} \label{sec:GFbarcode}
	Following \cite{Pol19}, we introduce the concept of persistence modules which is an algebraic tool introduced by G. Carlsson and A. Zamorodian \cite{ZomCar05} that allows one to measure how the groups $ G^{\left(-\infty,b\right]}_{\ast}\left(\varphi\right) $ associated to $ \varphi\in\Ham_c\left(\RRnn,\omega_0\right) $ change as a function of $ b $, then we use barcodes to associate a combinatorial object to this persistence module.
	
	\paragraph[Persistence modules]{}
	We first recall some of the basic definitions in the theory of persistence modules and barcodes:
	\begin{definition}
		A \textit{persistence module} (abbreviated as p.m) is a pair $ \left(V,\pi\right) $, where $ V $ is a collection $ \left\{V_t\right\}_{t\in\RR} $ of finite dimensional vector spaces over a field $ \FF $, and $ \pi $ is a collection $ \left\{\pi_{s,t}\right\}_{s\leq t\in\RR} $ of linear maps $ \pi_{s,t}:V_s\to V_t $ such that
		\begin{enumerate}
			\item (\textit{Persistence}) The collection $ \pi $ satisfies $ \pi_{s,r}=\pi_{t,r}\circ\pi_{s,t} $ for any $ s\leq t\leq r $,
			\item For all but a finite number of points $ t\in\RR $ there exists a neighborhood $ U $ of $ t $, such that $ \pi_{s,r} $ is an isomorphism for any $ s<r $ in $ U $,
			\item (\textit{Semicontinuity}) For any $ t\in\RR $ and any $ s\leq t $ sufficiently close to $ t $, the map $ \pi_{s,t} $ is an isomorphism,
			\item There exists some $ s_- \in\RR $, such that $ V_s =0 $ for any $ s<s_- $.
		\end{enumerate}
	\end{definition}
	\begin{example}[Interval modules]
		For any interval $ \left(a,b\right] $ where $ -\infty<a<b\leq\infty $, the persistence module $ \FF\left(a,b\right] $ defined as
		\begin{equation*}
			\left(\FF\left(a,b\right]\right)_t = \begin{cases}
				\FF & t\in\left(a,b\right] \\
				0 & t\notin\left(a,b\right]
			\end{cases}, \pi_{s,t} = \begin{cases}
				\Id & \left\{s,t\right\}\subseteq\left(a,b\right] \\
				0 & \left\{s,t\right\}\nsubseteq\left(a,b\right]
			\end{cases},
		\end{equation*}
		is called an \textit{interval module}.
	\end{example}
	\begin{example} \label{example:morse p.m}
		Another example of a p.m which is important to us is given by Morse theory, given a closed manifold $ M $ and a Morse function $ f:M\to\RR $ we define a p.m $ V\left(f\right) $ by setting $ V_t\left(f\right) = H_\ast\left(\left\{x\in M;f\left(x\right)<t\right\};\FF\right) $ (note that we take here sub-level sets with strict inequality, this is done in order to satisfy semicontinuity). Then for $ s\leq t $ the inclusions $ \left\{f<s\right\}\overset{in_{s,t}}{\hookrightarrow}\left\{f<t\right\} $ induce linear maps in homology $ \left(in_{s,t}\right)_*:V_s\to V_t $ so $ V\left(f\right) $ defines a p.m.
	\end{example}

	\paragraph[Morphisms and normal form]{}
	Interval modules serve as the ``basic blocks" in the sense that every other p.m decomposes into a direct sum of such interval modules. In order to make sense of this decomposition we need to define morphisms between persistence modules and the notion of isomorphism:
	\begin{definition}
		A p.m morphism $ A:V\to V^\prime$ is a collection $ \left\{A_t\right\}_{t\in\RR} $ of linear maps $ A_t:V_t\to V^\prime_t $ that satisfies $ \pi^\prime_{s,t}\circ A_s = A_t\circ\pi_{s,t} $ for all $ s\leq t $.
	\end{definition}
	Two p.m $ V $ and $ V^\prime $ are then called \textit{isomorphic} and denoted $ V\cong V^\prime $ if there exists two p.m morphisms $ A:V\to V^\prime $ and $ B:V^\prime\to V $ such that $ B\circ A = \Id_{V} $ and $ A\circ B = \Id_{V^\prime} $. A key feature of p.m is that it can be expressed as a sum of intervals modules (with different powers), this is called the normal form theorem and it appears e.g. in \cite{Pol19}.
	\begin{theorem}[Normal form theorem] \label{thm:normalform}
		Let $ \left(V,\pi\right) $ be a persistence module. Then there exists a finite collection $ \left\{\left(I_i,m_i\right)\right\}_{i=1}^N $ of pair-wise distinct intervals $ I_i = \left(a_i,b_i\right] $ ($ -\infty < a_i < b_i \leq \infty $) with their multiplicities $ m_i\in\NN $, such that
		\begin{equation*}
			V \cong \bigoplus_{i=1}^N \FF\left(I_i\right)^{m_i}.
		\end{equation*}
		Moreover, this identification is unique up to permutations.
	\end{theorem}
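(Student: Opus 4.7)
The plan is to exploit the finiteness conditions (2)--(4) to reduce the statement to the decomposition of a finite chain of finite-dimensional vector spaces, and then invoke a standard classification. By condition (2) there are only finitely many ``special'' parameters $t_1 < \cdots < t_N$ outside of which every $\pi_{s,r}$ is an isomorphism on a neighborhood, and by condition (4) I may prepend an extra parameter $t_0 < s_-$ below which the module vanishes. Using condition (2) to propagate the local isomorphisms across each component and condition (3) to see that $V_{t_k}$ agrees with the limit from below, one checks that $V_s$ is canonically a single vector space $W_k$ for all $s \in (t_{k-1}, t_k]$, and the persistence maps between adjacent stable regions assemble into a finite chain
\begin{equation*}
0 = W_0 \xrightarrow{\phi_1} W_1 \xrightarrow{\phi_2} W_2 \to \cdots \to W_N.
\end{equation*}
Any decomposition of this chain as a direct sum of ``interval chains'' (summands supported on consecutive indices $k, k{+}1, \ldots, \ell$ with identity transitions between them and zero outside) translates directly into a decomposition of $V$ as a direct sum of interval modules: a summand supported on indices $k, \ldots, \ell$ corresponds to the bar $(t_{k-1}, t_\ell]$, and one supported on $k, \ldots, N$ with no ``death'' corresponds to $(t_{k-1}, \infty]$.

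The main algebraic content is to decompose the finite chain above. The cleanest route is to identify it with a finitely generated $\ZZ_{\geq 0}$-graded module over $\FF[t]$, letting $t$ act from degree $k$ to degree $k{+}1$ via $\phi_{k+1}$, so that the structure theorem for finitely generated graded modules over a PID yields a unique direct sum decomposition into cyclic summands of the form $\FF[t]/(t^m)$ or $\FF[t]$. These cyclic summands correspond exactly to the interval chains described above. A self-contained alternative is an induction on $N$ that produces a compatible basis: at each stage one chooses a complement of $\ker \phi_N$ in $W_{N-1}$, extends together with a basis of $\mathrm{coker}\,\phi_N$ to a basis of $W_N$, and recurses on the subchain obtained by restricting and quotienting appropriately. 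The delicate point is ensuring the chosen bases remain compatible with every earlier transition map simultaneously.

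Uniqueness up to permutation follows from recovering the multiplicities from intrinsic invariants of $(V, \pi)$. For any $s < r$, the dimension $\dim \Ima \pi_{s,r}$ equals the number of intervals $(a_i, b_i]$ in the decomposition containing both $s$ and $r$, so by inclusion–exclusion over values of $s$ and $r$ taken just above and just below each special parameter, the multiplicity $m_i$ of every bar is expressed in terms of the rank function $(s, r) \mapsto \dim \Ima \pi_{s,r}$. Since this function depends only on the isomorphism class of $V$, so does the multiset $\{(I_i, m_i)\}$. I expect the main obstacle to lie in the existence step — namely, producing a decomposition of the chain when intervals span several special points at once; appealing to the PID classification sidesteps this neatly, while a purely linear-algebraic induction requires careful bookkeeping across all of the $W_k$.
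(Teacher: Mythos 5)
The paper states \Cref{thm:normalform} without proof, referring the reader to \cite{Pol19}; the result goes back to Zomorodian--Carlsson \cite{ZomCar05}, so there is no in-paper argument to compare against. Your proposal is essentially the standard proof and its structure is sound: conditions (2)--(4) of the definition, together with a chaining argument on $(t_{k-1}, t_k)$ and semicontinuity at the right endpoint, collapse $V$ to a finite chain $0 = W_0 \to W_1 \to \cdots \to W_N$ of finite-dimensional vector spaces that stabilizes past $t_N$; this is a finitely generated graded $\FF[t]$-module, and the structure theorem for finitely generated graded modules over the graded PID $\FF[t]$ gives the decomposition into shifted copies of $\FF[t]$ and $\FF[t]/(t^m)$, read back as rays $(t_{k-1},\infty)$ and bars $(t_{k-1}, t_\ell]$. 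Your uniqueness argument is also correct: for $s < t$, $\dim\Ima\pi_{s,t}$ counts (with multiplicity) the bars $(a,b]$ with $a < s$ and $t \leq b$, so inclusion--exclusion in $s$ and $t$ taken just above and just below the critical parameters recovers each $m_i$, and since the rank function is an isomorphism invariant so is the multiset $\{(I_i,m_i)\}$. Two points you should write out carefully if this is to become a complete proof: (i) the passage from ``each point of $(t_{k-1},t_k)$ has a neighborhood of regularity'' to ``$\pi_{s,r}$ is an isomorphism for all $s<r$ in $(t_{k-1},t_k]$'' needs a finite-cover/chaining argument across $(t_{k-1},t_k)$ followed by an appeal to condition (3) to include the endpoint $t_k$; (ii) finite generation of the associated graded $\FF[t]$-module rests on the chain becoming constant (identity transition maps) past $t_N$ together with each $W_k$ being finite-dimensional, which is worth saying explicitly. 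Neither is a gap in the approach, only in the level of detail.
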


	\paragraph[Interleaving distance]{}
	In order to define a metric on the space of p.m we first establish the concept of "shifts".
	\begin{example}[Shifted p.m]
		Let $ V $ be a p.m and $ \delta\in\RR $, the $ \delta $\textit{-shift} of $ V $, denoted by $ V\left[\delta\right] $, is a p.m defined by: $ \left(V\left[\delta\right]\right)_t = V_{t+\delta} $ and $ \left(\pi\left[\delta\right]\right)_{s,t} = \pi_{s+\delta,t+\delta} $.
	\end{example}
	Note that for a p.m $ V $ and $ \delta\in\RR $, the collection $ \Phi^\delta = \left\{\Phi^\delta_t\right\}_{t\in\RR} = \left\{\pi_{t,t+\delta}\right\}_{t\in\RR} $ defines a p.m morphism $ \Phi_V^\delta:V\to V\left[\delta\right] $, which is called the $ \delta $\textit{-shift morphism}. Furthermore, if $ A:V\to V^\prime $ is a p.m morphism, we denote by $ A\left[\delta\right]:V\left[\delta\right]\to V^\prime\left[\delta\right] $ the morphism such that $ \left(A\left[\delta\right]\right)_t = A_{t+\delta} $.
	\begin{definition}
		Given $ \delta > 0 $, two persistence modules $ \left(V,\pi\right) $ and $ \left(W,\theta\right) $ are called $ \delta $\textit{-interleaved} if there exist morphisms $ F:V\to W\left[\delta\right] $ and $ G:W\to V\left[\delta\right] $ such that the following diagrams commutes
		\begin{equation*}
			\xymatrix@C=2pc
			{
				V \ar[r]^-{F} \ar@/_1pc/[rr]_-{\Phi_V^{2\delta}} & W\left[\delta\right] \ar[r]^-{G\left[\delta\right]} & V\left[2\delta\right] && W \ar[r]^-{G} \ar@/_1pc/[rr]_-{\Phi_W^{2\delta}} & V\left[\delta\right] \ar[r]^-{F\left[\delta\right]} & W\left[2\delta\right]
			},
		\end{equation*}
		we refer to $ F $ and $ G $ as $ \delta $\textit{-interleaving morphisms}.
	\end{definition}
	Such $ \delta > 0 $ exists if and only if the spaces $ V_\infty $ and $ W_\infty $ are isomorphic and it also satisfies the properties needed to define a pseudo-metric on the space of (isomorphism classes of) persistence modules with isomorphic $ V_\infty $ by:
	\begin{definition}
		The \textit{interleaving distance} between two p.m $ V,W $ with $ V_\infty \cong W_\infty $ is
		\begin{equation*}
			\dint\left(V,W\right) = \inf\left\{\delta > 0 ; \text{$ V $ and $ W $ are $ \delta $-interleaved}\right\}
		\end{equation*}.
	\end{definition}
	Then $ \dint $ is an actual metric, i.e does not vanish on non-isomorphic p.m. See \cite[section 1.3]{Pol19} for more details.
	
	\paragraph[Barcodes and the isometry theorem]{}
	\begin{definition}
		A \textit{barcode} $\mathcal{B}$ is a finite collection $ \left\{\left(I_i,m_i\right)\right\}_{i=1}^N $ of intervals $ I_i = \left(a_i,b_i\right] $ where $ -\infty < a_i < b_i \leq \infty $ (called \textit{bars}) with multiplicities $ m_i\in\NN $. We view $ \mathcal{B} $ as a multi-set of intervals where each $ I_i $ appears $ m_i $ times.
	\end{definition}
	In view of Theorem \ref{thm:normalform} we may associate a unique barcode $ \mathcal{B}\left(V\right) $ to every (isomorphism class of) p.m $ V $, called the barcode of $ V $. Moreover, the space of barcodes (with the same amount of infinite bars) carries a metric called the \textit{bottleneck distance} and denoted by $ \dbot $ (see \cite{Pol19} for the exact definition) which makes the space of barcodes with the same amount of infinite bars a metric space and the normal form theorem gives a map $ V \mapsto \mathcal{B}\left(V\right) $
	\begin{equation*}
		\left(\left\{\text{p.m $ V $ with $ \dim V_\infty = n $}\right\},\dint\right) \mapsto \left(\left\{\text{barcode $ \mathcal{B} $ with $ n $ inifinite bars}\right\},\dbot\right),
	\end{equation*}
	furthermore, this map is in fact an isometry:
	\begin{theorem}[Isometry theorem] \label{thm:isometrythm}
		For any two p.m $ V,W $ with $ V_\infty \cong W_\infty $, the barcodes $ \mathcal{B}\left(V\right),\mathcal{B}\left(W\right) $ satisfy
		\begin{equation*}
			\dint\left(V,W\right) = \dbot\left(\mathcal{B}\left(V\right),\mathcal{B}\left(W\right)\right).
		\end{equation*}
	\end{theorem}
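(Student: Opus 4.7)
The plan is to establish the two inequalities $\dint(V,W) \leq \dbot(\mathcal{B}(V),\mathcal{B}(W))$ and $\dbot(\mathcal{B}(V),\mathcal{B}(W)) \leq \dint(V,W)$ separately. The starting point is the Normal Form Theorem (\Cref{thm:normalform}), which allows us to write $V \cong \bigoplus_i \FF(I_i)^{m_i}$ and $W \cong \bigoplus_k \FF(J_k)^{n_k}$. Since all constructions (interleaving morphisms, shifts, direct sums) respect the decomposition, it suffices to build the relevant morphisms summand-by-summand from combinatorial data on the intervals, and vice versa.

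For the easier direction $\dint \leq \dbot$, I would argue as follows. A $\delta$-matching $\mu$ realizing a bottleneck distance $\leq \delta$ consists of a partial bijection between the multisets of bars of $\mathcal{B}(V)$ and $\mathcal{B}(W)$ such that matched bars $(a,b]$ and $(a',b']$ satisfy $|a-a'|\leq \delta$ and $|b-b'|\leq \delta$ (with the convention $|\infty-\infty|=0$), while every unmatched bar has length at most $2\delta$. For each matched pair one checks directly that there are canonical morphisms $\FF(a,b] \to \FF(a'-\delta,b'-\delta]$ and back, given by the identity on the overlap and zero elsewhere, that fit into the interleaving diagram. For each unmatched bar of length $\leq 2\delta$, the zero morphism provides a $\delta$-interleaving with the trivial module. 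Taking direct sums yields $\delta$-interleaving morphisms $F:V\to W[\delta]$ and $G:W\to V[\delta]$, so $\dint(V,W) \leq \delta$.

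For the converse direction $\dbot \leq \dint$, which I expect to be the main obstacle, the idea is to produce a $\delta$-matching of the barcodes out of $\delta$-interleaving morphisms $F,G$. The cleanest approach is the induced matching construction of Bauer--Lesnick: for each $\delta$, define an injective map from the set of bars of $\mathcal{B}(V)$ of length $>2\delta$ to the set of bars of $\mathcal{B}(W)$, built by tracking how $F$ sends the generator corresponding to each interval into $W[\delta]$ and then identifying, among the bars of $W$, the one that carries the image. One then shows that matched bars satisfy the required $\delta$-closeness on endpoints and that any bar not in the image of either induced matching has length at most $2\delta$. The key input is an algebraic lemma relating the rank function (the dimension of $\pi_{s,t}$) to the multiplicities of bars: specifically, $\dim \pi_{s,t}$ equals the number of bars $(a,b]$ with $a<s$ and $b\geq t$, so $2\delta$-interleaving of $V$ and $W$ translates into numerical inequalities between their rank functions, which in turn control the induced matching.

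The technical heart of the argument is therefore the induced matching lemma: that a $\delta$-interleaving morphism on direct sums of interval modules restricts, up to shift, to an injective partial bijection on long enough intervals. I would prove this by working in coordinates adapted to the decomposition, analyzing the block structure of $F$ and $G$, and using the persistence/commutativity axioms $\pi_{s,r}=\pi_{t,r}\circ\pi_{s,t}$ to rule out image components landing in intervals whose endpoints are too far. Once this lemma is in place, both inequalities combine to give $\dint(V,W) = \dbot(\mathcal{B}(V),\mathcal{B}(W))$, and the statement follows.
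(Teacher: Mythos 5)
The paper does not actually prove the Isometry Theorem; it cites it as a known result from Bauer--Lesnick (reference \cite{BauLes} in the introduction) and from Polterovich's book \cite{Pol19}, and uses it as a black box. So there is no ``paper's own proof'' to compare against. Your proposal is an outline of the standard modern argument (the Bauer--Lesnick induced-matching proof), which is precisely the proof of the cited reference: split into the easy inequality $\dint\leq\dbot$ by assembling interleaving morphisms summand-by-summand from a $\delta$-matching, and the hard converse $\dbot\leq\dint$ via the induced matching construction, with the rank-function identity $\rk\,\pi_{s,t}=\#\{(a,b]:a<s,\ b\geq t\}$ as the numerical bridge. The outline is correct as far as it goes. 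What you leave as a black box (``the technical heart \ldots the induced matching lemma'') is genuinely the hard part: one cannot simply ``rule out image components landing in intervals whose endpoints are too far'' by inspection of block structure, because a $\delta$-interleaving morphism between decomposed modules need not be block-diagonal or even block-triangular in any intrinsic sense, and the induced matching must be defined via images and coimages of $F$ and then shown to interact correctly with the matching induced by $G$. So your proposal correctly identifies both the architecture of the proof and where the real work lies, but supplies only the architecture, not the work.
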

	In our context, we use persistence modules to define the change of $G_\ast^{\left(a,b\right]}\left(\varphi\right) $ as a function of $ b $, we then approximate it using finite samples and using the language of barcodes with bottleneck distance we can quantify the error of our approximation.

	\paragraph[gen' func' barcode]{}
	Recall the groups $ G_\ast^{\left(-\infty,b\right]}\left(\varphi\right) $ from \Cref{eq:G^b}. We wish to define a p.m associated to a Hamiltonian diffeomorphism $ \varphi $ like we did in \Cref{example:morse p.m} which measures how these groups change as a function of $ b $. Note that the homotopy type of $ E^{-\infty} $ is the same as of $ \SS^{2n}\times\left(\DD^{d-i}\times\partial\DD^i\right)  $, where $ \DD^k $ denotes the topological disc of dimension $ k $, and  the homotopy type of $ E = \SS^{2n}\times\RR^d $ (which is the same as $ \SS^{2n}\times\DD^d $) is obtained by gluing $ \DD^i $ along $ \partial\DD^i $, so when $ b = \infty $ (i.e, when $ E^b = E $) we get
	\begin{equation*}
		H_{\ast+i}\left(E,E^{-\infty}\right) = H_{\ast+i}\left(E^{-\infty}\cup_{\SS^{2n}\times\partial\DD^i}\left(\SS^{2n}\times\DD^i\right),E^{-\infty}\right) = H_{\ast+i}\left(\SS^{2n}\times\nicefrac{\DD^i}{\partial\DD^i}\right),
	\end{equation*}
	which by the K\"unneth formula (our homology groups are taken over the field $ \ZZ_2 $) is equal to $ H_{\ast}\left(\SS^{2n}\right) $. So, we see that when $ b\to\infty $ these groups recreate the original base manifold on which $ \varphi $ is defined since $ \varphi $ is compactly supported and we view it as a diffeomorphism of $ \SS^{2n} $ preserving the point at $ \infty $.
	\begin{definition} \label{def:barcodeofphi}
		Let $ \varphi\in\Ham_c\left(\RRnn\right) $ such that all fixed points of $ \varphi $ are non-degenerate, the \textit{GF-barcode} $ \mathcal{B}\left(\varphi\right) $ is the barcode associated to the p.m $ V\left(\varphi\right) $ given by
		\begin{equation*}
			\begin{split}
				V_t\left(\varphi\right) &= G_\ast^{\left(-\infty,t\right]}\left(\varphi\right) = H_{\ast+i}\left(E^t,E^{-\infty};\ZZ_2\right) \\
				\pi_{s,t} &= in_*:V_s\left(\varphi\right)\to V_t\left(\varphi\right),\text{ where } \left(E^s,E^{-\infty}\right)\overset{in}{\hookrightarrow}\left(E^t,E^{-\infty}\right),
			\end{split}
		\end{equation*}
		defined for $ t\notin\Crit\left(S\right) $ and extended by semicontinuity for all $ t\in\RR $. Where $ E = \SS^{2n}\times\RR^d $, $ S:E\to\RR $ is a normalized GFQI of $ \varphi $ with quadratic index $ i $. 
	\end{definition}
	The barcode $ \mathcal{B}\left(\varphi\right) $ is thus our desired invariant which concludes the construction of the GF-Barcode.

	\section{The approximation algrorithm} \label{sec:approxalgo}
	In this section we prove \Cref{thm:introthm} by showing how the GF-Barcode of a Hamiltonian diffeomorphism $ \varphi\in\Ham_c\left(\RRnn,\omega_{0}\right) $ can be numerically approximated.
	\begin{definition*}
		Let $ S:\RRnn\to\RR $. A \textit{sample} of $ S $ with mesh parameter $ m\in\NN $ is the restriction $ S:{\frac{1}{m}\ZZ^{2n}}\to\RR $, which is considered as a finite set when $ S $ is compactly supported.
	\end{definition*}
	
	The following theorem is essentially the same as Theorem \ref{thm:introthm} but with slightly more details.
	\begin{theorem} \label{thm:mainthm}
		There exists an algorithm which takes as input samples of $ N $ functions $ S_1,\dots,S_N:\RRnn\to\RR $ with mesh parameter $ m\in\NN $, such that each $ S_j $ is a generating functions of $$ \varphi_j\in\mathcal{H}_{R,T}\left(\RRnn,\omega_{0}\right):= \left\{\varphi\in\Ham_c\left(\RRnn,\omega_{0}\right);\supp\left(\varphi\right)\subset\BB_{R},\left\|\varphi - \Id\right\|_{C^0}\leq T\right\} ,$$
		and returns a barcode $ \mathcal{B} $ such that
		\begin{equation*}
			\dbot\left(\mathcal{B},\mathcal{B}\left(\varphi_N\circ\dots\circ\varphi_1\right)\right) \leq C\frac{\sqrt{n}TN^2}{m} = \varepsilon,
		\end{equation*}
		where the constant $ C $ depends only on $ R $.
		
		Furthermore, for fixed $ n,N,T,R $ the time complexity bound of the algorithm (i.e a bound on the number of operations needed to compute all steps) is polynomial in $ \frac{1}{\varepsilon} $, and for fixed $ \varepsilon, n, T,R $ the time complexity grows as $ N^N $.
	\end{theorem}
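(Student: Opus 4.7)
The plan is to implement the right-hand column of \Cref{fig:intro} as an explicit algorithm and then to bound the error incurred at each stage by a quantity of order $\sqrt{n}TN^2/m$, at which point the target inequality follows from the stability of sublevel-set persistence. I would first use the sampled values of each $S_j$ on $\tfrac{1}{m}\ZZ^{2n}$ to define a piecewise-multilinear interpolant $\widetilde S_j$ on a box containing $\BB_R$, then apply Chekanov's formula (\Cref{prop:compformula}) iteratively to produce
\[
\widetilde S \;=\; \widetilde S_1 \,\#\, \widetilde S_2 \,\#\, \cdots\, \#\, \widetilde S_N \;:\; \RRnn \times \RR^{2n(N-1)} \longrightarrow \RR ,
\]
which, since each $\widetilde S_j$ differs from the zero function by something compactly supported and the bilinear coupling $\langle P-p, Q'-Q\rangle$ contributed at each composition step is non-degenerate on the new auxiliary variables, is asymptotically quadratic at infinity in the sense of \Cref{def:asympquadatinfty}.

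Next I would convert $\widetilde S$ into a true GFQI with explicit quadratic radii $R_b, R_f$ depending on $R,T,N,n$: the base radius $R_b$ has to contain the projection of the support region coming from the compositional support estimates, and $R_f$ has to be large enough that \Cref{eq:Rflarge} holds, so that by \Cref{lemma:deformrelativesmallvalues} the sublevel sets below $-R_f^2$ are topologically trivial. This allows the algorithm to work on the compact region $\BB_{R_b}\times\BB_{R_f}$, and by \Cref{thm:gfqiunique} the barcode computed from this GFQI agrees with the intrinsic GF-barcode $\mathcal{B}(\varphi_N\circ\cdots\circ\varphi_1)$. I would then fix a cubical mesh of size $\tfrac{1}{m}$ on $\BB_{R_b}\times\BB_{R_f}$, realize the relative complex $K \sim \BB_{R_b}/\partial\BB_{R_b} \times \BB_{R_f}$, assemble the boundary matrices $\{\partial_j\}_{j=1}^{2nN}$, induce the lower-star filtration from the sampled values at $K^0$, and run the standard column reduction to obtain a reduced matrix $R$ whose pivot pairing yields the barcode $\mathcal{B}$.

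The main obstacle is the Lipschitz bookkeeping needed to obtain the constant in $C\sqrt{n}TN^2/m$. The idea would be: for $\varphi_j\in\mathcal{H}_{R,T}$ with $\varphi_j$ sufficiently $C^1$-small, the generating function $S_j$ of \Cref{def:smallgenfunc} has $C^1$-norm controlled by a constant multiple of $T$; each application of Chekanov's formula adds a bilinear coupling whose Lipschitz contribution on the relevant compact box is bounded in terms of the previous Lipschitz constants and the diameter of the box, and an induction over $N$ yields a Lipschitz constant for $\widetilde S - S$ of order $TN$ on the region of interest. Replacing the continuous $S$ by its values on the mesh produces a $C^0$-error of order $\sqrt{n}\,TN/m$ per composition step, accumulating to $\sqrt{n}TN^2/m$ after $N$ steps. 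The classical stability theorem $\dbot(\mathcal{B}(f),\mathcal{B}(g))\leq \|f-g\|_{C^0}$ then yields the claimed bottleneck bound.

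For the complexity claim, I would count: the cell complex $K$ has on the order of $(mR_b)^{2n}\cdot (mR_f)^{2n(N-1)}$ vertices, and since for fixed $n,N,T,R$ both $R_b,R_f$ are fixed and $m\sim 1/\varepsilon$, the number of cells is polynomial in $1/\varepsilon$; the standard matrix-reduction persistence algorithm runs in time cubic in the number of cells, which preserves polynomiality. For fixed $\varepsilon,n,T,R$ but varying $N$, the dimension $2nN$ of the ambient box forces the vertex count to grow like $c^N$ with $c$ depending on $m$ and the geometric radii, and since the implicit $m$ required to achieve a fixed $\varepsilon$ grows polynomially in $N$, the overall count behaves as $N^{\mathcal{O}(N)}$, matching the stated $N^N$ growth.
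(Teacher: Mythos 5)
Your proposal captures the same broad pipeline as the paper — Chekanov composition, explicit quadratic-at-infinity reduction, restriction to a compact domain, cubical discretization, lower-star filtration, matrix reduction — and the complexity heuristic at the end is essentially correct. However, there are two genuine gaps in the error analysis.

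First, the appeal to "the classical stability theorem $\dbot(\mathcal{B}(f),\mathcal{B}(g))\leq \|f-g\|_{C^0}$'' does not apply as stated. That theorem compares sublevel-set persistence of two functions on the \emph{same} space, whereas the comparison you actually need is between (i) the sublevel-set persistence module of the smooth GFQI $S$ on $\BB_{R_b}/\partial\BB_{R_b}\times\BB_{R_f}$ and (ii) the filtered-CW persistence module of the cubical pair $(K,L)=(X/\partial X\times Y,\,X/\partial X\times Y_0)$, which is a \emph{different} topological space (only homotopy equivalent to the former). Bridging this requires explicit interleaving morphisms induced by the homotopy equivalences $h=\widetilde{h_b}\oplus h_f$ and $h^{-1}$ from \Cref{lemma:cellcmplxhomot}, and the interleaving constant picks up contributions from both the Lipschitz constant $l=\max_{\BB_{R_b}\times\BB_{R_f}}|\nabla S|$ and the $C^0$-displacement $\|h^{\pm1}-\Id\|_{C^0}\leq\sqrt{2n+d}/m$ as well as the cell diameter $r$. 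This is precisely what \Cref{lemma:interleavingpm} establishes, and there is additional casework there (e.g., points near $\partial\BB_{R_b}$ whose quotient representatives are not unique) that cannot be swept under a plain $C^0$-stability invocation.

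Second, your quantitative accounting, "a Lipschitz constant for $\widetilde S - S$ of order $TN$'' and "$C^0$-error of order $\sqrt{n}TN/m$ per composition step, accumulating to $\sqrt{n}TN^2/m$ after $N$ steps,'' does not match the actual source of the $N^2$ and is not justified. In the paper's analysis the relevant quantity is the Lipschitz constant of $S$ itself (not of an error $\widetilde S-S$), which after the coordinate changes $\Phi_1,\Phi_2$ carries weights growing like $\sqrt{N-j}$, giving $l\leq C\,TN^{3/2}$; the cell diameter in dimension $2nN$ contributes $r\sim\sqrt{2nN}/m$; and the product $l\cdot r\sim\sqrt{n}TN^2/m$ yields the stated bound. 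Without tracking these weights the $N^{3/2}$ does not appear, and the "$N$ times $TN$'' count you propose is ad hoc. Relatedly, your opening step — building piecewise-multilinear interpolants $\widetilde S_j$ and composing those — is unnecessary and slightly off-route: the coordinate changes $\Phi_1,\Phi_2$ send $\frac{1}{m}\ZZ^{2nN}$ into integer linear combinations that land in $\frac{1}{m}\ZZ^{2n}$, so the composed generating function can be evaluated exactly on the relevant lattice from the given samples with no interpolation error; moreover, once you interpolate, the resulting $\widetilde S$ generates a \emph{different} Lagrangian, so Viterbo uniqueness (\Cref{thm:gfqiunique}) cannot be cited to conclude that its barcode equals $\mathcal{B}(\varphi_N\circ\cdots\circ\varphi_1)$ — you would need a separate stability argument for the interpolation error on top of everything else.
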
	
	 Given $ N $ samples of functions $ S_j:\RRnn\to\RR $ with mesh parameter $ m\in\NN $ (i.e the restriction $ S_j\vert_{\frac{1}{m}\ZZ^{2n}} $) such that $ S_j $ generates $ \varphi_j\in\mathcal{H}_{R,T}\left(\RRnn,\omega_{0}\right),$
	we lay out the steps needed to approximate the GF-Barcode of $ \varphi = \varphi_N\circ\dots\circ\varphi_1 $. Going over the definition of GF-Barcodes presented previously, we address the following issues in this section:
	\begin{itemize}
		\item Obtaining an explicit expression for an asymptotically quadratic at infinity generating function of $ \varphi $ by using a \textbf{composition formula} (see \Cref{prop:compformula}).
		\item Making the generating function into a GFQI in an explicit way and calculating its quadratic radii by using an \textbf{interpolation argument} (see \Cref{lemma:GFQI}).
		\item Showing that GF homology groups may be calculated from the restriction of the GFQI to a compact domain inside its quadratic radii using \textbf{Morse theoretic arguments} (see \Cref{lemma:compdomforG^b}).
		\item Constructing a cell complex approximating the domain of our GFQI and computing its boundary matrices using standard algebraic topology tools.
		\item Filtering the cell complex according to the samples of our GFQI on its $ 0 $-skeleton and permuting the boundary matrices according to the filtration.
		\item Extracting a barcode out of the boundary matrices using a \textbf{matrix reduction process} and bounding its bottleneck distance from the desired barcode.
	\end{itemize}
	
	\subsection{Composition formula} \label{sec:compositionformula}
	We start by showing how one can explicitly obtain a generating function asymptotically quadratic at infinity using generating functions of $C^1$-small Hamiltonian diffeomorphisms in $ \mathcal{H}_{R,T} $. We use Proposition \ref{prop:compformula} to get a generating function $ \#_{j=1}^N S_j $ for $ L_\varphi $ by induction and get the following expression.
	\begin{equation} \label{eq:compgenfunc}
		\begin{split}
			&\#_{j=1}^N S_j : \RRnn\times\left(\underbrace{\RRnn\times\dots\times\RRnn}_{N-1}\right) \to \RR \\
			&\#_{j=1}^N S_j\left(q_N,p_0;q_1,p_1,\dots,q_{N-1},p_{N-1}\right) = \sum_{j=1}^N S_j\left(q_j,p_{j-1}\right) + \sum_{j=2}^N \left\langle p_{j-1}-p_0,q_j - q_{j-1}\right\rangle.
		\end{split}
	\end{equation}
	Where we denote $ \varphi_j\left(q_{j-1},p_{j-1}\right) = \left(q_j,p_j\right) $ for $ j = 1,\dots,N $.
	
	\paragraph[asymp' quad' at inf']{}
	We consider the fiber preserving diffeomorphism $ \Phi_1:\RRnn\times\RR^d\to\RRnn\times\RR^d $ given by
	\begin{equation*}
		\begin{split}
			&\Phi_1\left(q_N,p_0;q_1,p_1,\dots,q_{N-1},p_{N-1}\right) = 	\left(q_N,p_0;\widetilde{q}_1,\widetilde{p}_1,\dots,\widetilde{q}_{N-1},\widetilde{p}_{N-1}\right)\\
			&\widetilde{q}_j=\begin{cases}
				q_N - q_{n-1} - \dots - q_j &  \left(1\leq j\leq N-1\right) \\
				q_N & j = N
			\end{cases}\qquad \widetilde{p}_j=\begin{cases}
				p_j + p_0  &  \left(1\leq j\leq N-1\right) \\
				p_0 & j = 0
			\end{cases}
		\end{split}
	\end{equation*}
	then $ \#_{j=1}^NS_j\circ\Phi_1 $ still generates $ L_\varphi $ and is given by
	\begin{equation*}
		\#_{j=1}^NS_j\circ\Phi_1\left(q_N,p_0;q_1,p_1,\dots,q_{N-1},p_{N-1}\right) = 	\sum_{j=1}^{N}S_j\left(\widetilde{q}_j,\widetilde{p}_{j-1}\right) + \sum_{j=1}^{N-1}\left\langle p_j,q_j\right\rangle.
	\end{equation*}
	For later convenience, we denote by $ \Phi_2:\RRnn\times\RR^d\to\RRnn\times\RR^d $ a second fiber preserving diffeomorphism given by
	\begin{equation*}
		\begin{split}
			&\Phi_2\left(q_N,p_0;\xi_1^-\dots,\xi_{N-1}^-,\xi_1^+,\dots,\xi_{N-1}^+\right) = 	\left(q_N,p_0;q_1,p_1,\dots,q_{N-1},p_{N-1}\right)\\
			&q_j=-\xi^-_j + \xi^+_j \qquad p_j=\xi^-_j + \xi^+_j \qquad\left(1\leq j\leq N-1\right).
		\end{split}
	\end{equation*}
	We obtain a generating function $ S^\prime = \#_{j=1}^NS_j\circ\Phi_1\circ\Phi_2 $ given by
	\begin{equation} \label{eq:asympquadgenfunc}
		S^\prime\left(q_N,p_0;\xi\right) = S^\prime\left(q_N,p_0;\xi_1^-\dots,\xi_{N-1}^-,\xi_1^+,\dots,\xi_{N-1}^+\right) = \sum_{j=1}^NS_j\left(\widetilde{q}_j,\widetilde{p}_{j-1}\right)+\mathcal{Q}\left(\xi\right),
	\end{equation}
	where
	\begin{equation*}
		\begin{split}
			\widetilde{q}_j = q_N+\sum_{k=j}^N\xi^-_k-\sum_{k=j}^N\xi^+_k, &\qquad \widetilde{p}_j = p_0+\xi^-_{j}+\xi^+_{j} \qquad \left(\xi^-_N = \xi^+_N = \xi^-_0 = \xi^+_0 = 0\right) \\
			\mathcal{Q}\left(\xi\right) &= -\sum_{j=1}^{N-1}\left(\xi^-_j\right)^2 +\sum_{j=1}^{N-1}\left(\xi^+_j\right)^2.
		\end{split}
	\end{equation*}
	and $ S^\prime $ still generates $ L_\varphi $. It follows that the partial derivatives of $ S^\prime-\mathcal{Q} $ in the fiber direction are
	\begin{equation*}
		\begin{split}
			\frac{\partial}{\partial \xi_i^-}\left(S^\prime-\mathcal{Q}\right) &= \sum_{j=1}^i\frac{\partial S_j}{\partial Q}\left(\widetilde{q}_j,\widetilde{p}_{j-1}\right) + \frac{\partial S_{i+1}}{\partial p}\left(\widetilde{q}_{i+1},\widetilde{p}_{i}\right) \\
			\frac{\partial}{\partial \xi_i^+}\left(S^\prime-\mathcal{Q}\right) &= -\sum_{j=1}^i\frac{\partial S_j}{\partial 	Q}\left(\widetilde{q}_j,\widetilde{p}_{j-1}\right) + \frac{\partial S_{i+1}}{\partial p}\left(\widetilde{q}_{i+1},\widetilde{p}_{i}\right),
		\end{split}
	\end{equation*}
	so we get a bound on $ \left|\frac{\partial}{\partial\xi}\left(S^\prime - \mathcal{Q}\right)\right| $ by
	\begin{equation*}
		\begin{split}
			\left|\frac{\partial}{\partial\xi}\left(S^\prime - \mathcal{Q}\right)\right| &\leq 	\sum_{j=1}^{N-1}\sqrt{2\left(N-j\right)}\left|\frac{\partial S_j}{\partial Q}\left(\widetilde{q}_j,\widetilde{p}_{j-1}\right)\right| + \sqrt{2}\sum_{j=2}^N\left|\frac{\partial S_j}{\partial p}\left(\widetilde{q}_j,\widetilde{p}_{j-1}\right)\right| \\
			&\leq \sqrt{2}\left(\sum_{j=1}^{N-1}\sqrt{N-j}\left|\nabla S_j\right| + \left|\nabla S_{j+1}\right|\right).
		\end{split}
	\end{equation*}
	Finally, Since all $ S_j $ generates $ \varphi_j\in\mathcal{H}_{R,T} $ we know $ \left|\nabla S_j\right| = \left|\left(P_{\varphi_j} - p,q - Q_{\varphi_j}\right)\right| < \left\|\varphi_j - \Id\right\|_{C^0} \leq T $ which gives the uniform bound (over $ \RRnn $)
	\begin{equation*}
		\left|\frac{\partial}{\partial\xi}\left(S^\prime - \mathcal{Q}\right)\right| < 	\sqrt{2}\left(\sum_{j=1}^{N-1}\sqrt{N-j}\left\|\varphi_j - \Id\right\|_{C^0} + \left\|\varphi_{j+1} - \Id\right\|_{C^0}\right) < \sqrt{2}T\left(\left(N-1\right)+\sum_{j=1}^{N-1}\sqrt{j}\right).
	\end{equation*}
	So $ S^\prime $ is indeed asymptotically quadratic at infinity.

	\subsection{Obtaining a GFQI} \label{sec:obtainingGFQI}
	\paragraph[obtaining GFQI]{}
	After obtaining an expression for $ S^\prime $ which is asymptotically quadratic at infinity, we wish to obtain an explicit expression for a GFQI out of $ S^\prime $.
	
	We use the following argument due to Viterbo (see \cite{Vit11}), essentially, we explicitly find a radius (in the fiber direction) outside of which there are no fiber-critical points and then interpolates the original generating function with the quadratic form in a way that does not create any new fiber-critical points, thus preserving the generated Lagrangian.
	\begin{lemma} \label{lemma:GFQI}
		Let $ S^\prime:\RRnn\times\RR^d\to\RR $ be an asymptotically quadratic at infinity generating function and for each $ x\in\RRnn $ let $ R_x > M_x\left\|A_{\mathcal{Q}}^{-1}\right\|_{\text{op}} $ where $ A_{\mathcal{Q}} $ is the matrix such that $ \mathcal{Q}\left(x,\xi\right) = \frac{1}{2}\left\langle A_{\mathcal{Q}}\xi,\xi \right\rangle $ and $ M_x $ is the bound of $ \left|\frac{\partial}{\partial \xi}\left(S^\prime-\mathcal{Q}\right)\right| $, then the function $ S^\prime\left(x,\xi\right) $ has no fiber-critical points outside a ball of radius $ R_x $ in the fiber $ \left\{x\right\}\times\RR^d $. Furthermore, if $ \mathcal{Q} $ is independent of the base variable, $ \left|\frac{\partial}{\partial \xi}\left(S^\prime-\mathcal{Q}\right)\right| $ is bounded uniformly over $ \RRnn $ and $ \sup_{x\in\RRnn}\left|S^\prime\left(x,0\right) \right|< C_0 $, then the function
		\begin{equation} \label{eq:defGFQI}
			S\left(x,\xi\right) = \rho\left(\left|\xi\right|\right)S^\prime\left(x,\xi\right) + \left(1-\rho\left(\left|\xi\right|\right)\right)\mathcal{Q}\left(\xi\right),
		\end{equation}
		generates the same Lagrangian as $ S^\prime $, where $ \rho $ is a smooth step function with sufficiently small derivative, and $ S\equiv\mathcal{Q} $ outside a ball of radius $ \frac{C_0}{M} + 4M\left\|A_\mathcal{Q}^{-1}\right\|_{\text{op}} $ in the fiber direction.
	\end{lemma}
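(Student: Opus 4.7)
The first claim is an immediate estimate on the fiber gradient. Writing $\partial_\xi S'(x,\xi) = A_\mathcal{Q}(x)\xi + \partial_\xi(S' - \mathcal{Q})(x,\xi)$, I would bound $|A_\mathcal{Q}(x)\xi| \geq |\xi|/\|A_\mathcal{Q}(x)^{-1}\|_{\text{op}}$ via the standard inverse-operator inequality and bound the second term by $M_x$. The triangle inequality then prevents $\partial_\xi S'$ from vanishing once $|\xi| > M_x\|A_\mathcal{Q}(x)^{-1}\|_{\text{op}}$.

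For the second part, the plan is to choose a smooth cutoff $\rho:[0,\infty)\to[0,1]$ with $\rho\equiv 1$ on $[0,R_1]$ and $\rho\equiv 0$ on $[R_2,\infty)$, with $R_1<R_2$ to be determined, and to show that the resulting $S$ has fiber-critical set contained in $\{|\xi|\leq R_1\}$, where it coincides with $S'$. On $\{|\xi|\leq R_1\}$ this is automatic since $S=S'$, and on $\{|\xi|\geq R_2\}$ it is automatic because $S=\mathcal{Q}$ is a non-degenerate quadratic form in the fiber. The substance of the argument is ruling out fiber-critical points in the transition annulus $R_1\leq|\xi|\leq R_2$.

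In that annulus, the fiber gradient of $S$ decomposes as
\begin{equation*}
\partial_\xi S \;=\; A_\mathcal{Q}\xi \;+\; \rho(|\xi|)\,\partial_\xi(S' - \mathcal{Q}) \;+\; \rho'(|\xi|)\,\frac{\xi}{|\xi|}\bigl(S'(x,\xi) - \mathcal{Q}(\xi)\bigr).
\end{equation*}
The first term is bounded below by $|\xi|/\|A_\mathcal{Q}^{-1}\|_{\text{op}}$ and the second by $M$. For the third, I would integrate the hypothesis $|\partial_\xi(S' - \mathcal{Q})|\leq M$ along the radial ray from $0$ and use $|S'(x,0)|\leq C_0$ together with $\mathcal{Q}(0)=0$ to obtain $|S'(x,\xi) - \mathcal{Q}(\xi)|\leq C_0 + M|\xi|$. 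This yields the pointwise lower bound
\begin{equation*}
|\partial_\xi S(x,\xi)| \;\geq\; \frac{|\xi|}{\|A_\mathcal{Q}^{-1}\|_{\text{op}}} \;-\; M \;-\; |\rho'(|\xi|)|\bigl(C_0 + M|\xi|\bigr),
\end{equation*}
valid throughout the transition annulus.

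The main obstacle is balancing the choice of $R_1,R_2$ against the size of $|\rho'|$, which is forced to be of order $1/(R_2-R_1)$ for any admissible step profile. Taking $R_1$ to be a sufficiently large multiple of $M\|A_\mathcal{Q}^{-1}\|_{\text{op}}$ creates enough slack in the first two terms to absorb the $\rho'$-contribution, provided the transition width is at least of order $R_1 + C_0/M$; an explicit optimization together with a choice of profile making $\|\rho'\|_\infty$ close to the minimal value $1/(R_2-R_1)$ then yields the claimed fiber support radius $\tfrac{C_0}{M} + 4M\|A_\mathcal{Q}^{-1}\|_{\text{op}}$. Finally, that $S$ generates the same Lagrangian as $S'$ is automatic: the fiber-critical loci coincide (both sitting inside $\{|\xi|\leq R_1\}$), and on that locus $S = S'$ and $\partial_x S = \partial_x S'$, so the Lagrange multipliers and hence the generated Lagrangians agree.
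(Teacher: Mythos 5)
Your proof follows essentially the same route as the paper. The first part is identical. For the second part you use the same three-term decomposition of $\partial_\xi S$, the same radial-integration bound $|S'(x,\xi)-\mathcal{Q}(\xi)|\leq C_0 + M|\xi|$, and the same mechanism of choosing $|\rho'|\approx 1/(R_2-R_1)$; the paper phrases the conclusion as an upper bound on $|\xi|$ at a fiber-critical point rather than a lower bound on $|\partial_\xi S|$, but these are contrapositive formulations of the same estimate. One small remark on your parameter sketch: the paper places the $C_0/M$ contribution into $R_1$ (taking $R_1 = C_0/M + 2M\|A_\mathcal{Q}^{-1}\|_{\text{op}}$ and width $2M\|A_\mathcal{Q}^{-1}\|_{\text{op}}$), whereas you put it into the width; both splits, once one actually solves the inequality $R_1(\|A_\mathcal{Q}^{-1}\|_{\text{op}}^{-1}-|\rho'|M) > M + |\rho'|C_0$ with $|\rho'|\approx 1/\text{width}$, land on the same outer radius $C_0/M + 4M\|A_\mathcal{Q}^{-1}\|_{\text{op}}$, so your sketch is consistent with the claimed constant even though the particular balancing you describe is slightly different from the paper's. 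Your closing observation that $\Sigma_S = \Sigma_{S'}$ (since both fiber-critical loci sit in $\{|\xi|\leq R_1\}$, using part one for $S'$, and there $S=S'$) is exactly how the paper concludes that the generated Lagrangians agree.
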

	
	\begin{proof}
		Let $ S^\prime:\RRnn\times\RR^d\to\RR $ be an asymptotically quadratic at infinity generating function with quadratic part $ \mathcal{Q} $, the first part of the lemma is proven by the following claim.
		\begin{claim}
			For each $ x\in\RRnn $, Let $ R_x > M_x\left\|A_\mathcal{Q}^{-1}\right\|_{\text{op}} $, where $ A_\mathcal{Q} $ is the matrix such that $ \mathcal{Q}\left(x,\xi\right)=\frac{1}{2}\left\langle A_\mathcal{Q}\xi,\xi\right\rangle $ and $ M_x > 0 $ is the bound on $ \left|\frac{\partial}{\partial \xi}\left(S^\prime\left(x,\xi\right)-\mathcal{Q}\left(x,\xi\right)\right)\right| $. Then
			\begin{equation*}
				 \left|\xi\right|>R_x \implies \frac{\partial S^\prime}{\partial\xi}\left(x,\xi\right) \neq 0.
			\end{equation*}
		\end{claim}
		\begin{proof}[Proof of claim]
			Let $ \xi\in\RR^d $ such that $ \left|\xi\right| \geq R_x $, then we have
			\begin{equation*}
				\begin{split}
					\left|\frac{\partial S^\prime}{\partial\xi}\left(x,\xi\right)\right| &= \left|\frac{\partial\mathcal{Q}}{\partial\xi}\left(x,\xi\right)+\frac{\partial}{\partial\xi}\left(S^\prime\left(x,\xi\right)-\mathcal{Q}\left(x,\xi\right)\right)\right| \\
					&\geq \left|A_\mathcal{Q}\xi\right| - \left|\frac{\partial}{\partial\xi}\left(S^\prime\left(x,\xi\right)-\mathcal{Q}\left(x,\xi\right)\right)\right| \geq \left\|A_\mathcal{Q}^{-1}\right\|_{\text{op}}^{-1}\left|\xi\right| - M_x > 0,
				\end{split}
			\end{equation*}
			since $ \left\|A_\mathcal{Q}^{-1}\right\|_{\text{op}}^{-1}\left|\xi\right| \geq \left\|A_\mathcal{Q}^{-1}\right\|_{\text{op}}^{-1}R_x > M_x $.
		\end{proof}
		
		Next, assuming $ \mathcal{Q} $ is independent of the base variable and $ \sup_{x\in\RRnn}M_x < M $, we set $ R = M\left\|A_\mathcal{Q}^{-1}\right\|_{\text{op}} $ then by the first part of the lemma we know that $ S^\prime $ does not have any fiber critical points (in all fibers) outside a ball of radius $ R $. Further assuming $ \sup_{x\in\RRnn} \left|S^\prime\left(x,0\right)\right| < C_0 $ we denote
		\begin{equation*}
			R_f^- = \frac{C_0}{M} + 2R \qquad R_f = R_f^- + 2R = \frac{C_0}{M} + 4M\left\|A_\mathcal{Q}^{-1}\right\|_{\text{op}},
		\end{equation*}
		and take a smooth function $ \rho:\left[0,\infty\right)\to\RR $ such that
		\begin{equation*}
			\rho\left(r\right) =
			\begin{cases}
				1 & r\in\left[0,R_f^-\right] \\
				0 & r\in\left[R_f,\infty\right)
			\end{cases} \qquad 0\geq\rho^\prime\geq-\varepsilon,
		\end{equation*}
		where $ \varepsilon = \frac{1}{2R-\delta} $ for $ \delta < 2\left(R-M\left\|A_{\mathcal{Q}}^{-1}\right\|_{\text{op}}\right) $. Then we consider the function
		\begin{equation*}
			S\left(x,\xi\right) = \rho\left(\left|\xi\right|\right)S^\prime\left(x,\xi\right) + 	\left(1-\rho\left(\left|\xi\right|\right)\right)\mathcal{Q}\left(\xi\right),
		\end{equation*}
		which clearly identifies with $ \mathcal{Q} $ outside of a ball of radius $ R_f $ in the fiber direction.
		\begin{claim}
			$\frac{\partial S}{\partial \xi}\left(x,\xi\right) = 0 \implies \left|\xi\right|<R_f^- $.
		\end{claim}
		\begin{proof}[Proof of claim]
			Let $ \left(x,\xi\right)\in\RRnn\times\RR^d $ such that $ \frac{\partial S}{\partial \xi}\left(x,\xi\right) = 0 $, then we have
			\begin{equation*}
				0 = \frac{\partial S}{\partial \xi}\left(x,\xi\right) = \rho^\prime\left(\left|\xi\right|\right)\frac{\xi}{\left|\xi\right|}\left(S^\prime\left(x,\xi\right)-\mathcal{Q}\left(\xi\right)\right) + \rho\left(\left|\xi\right|\right)\frac{\partial}{\partial\xi}\left(S^\prime\left(x,\xi\right)-\mathcal{Q}\left(\xi\right)\right) + d\mathcal{Q}\left(\xi\right).
			\end{equation*}
			Note that $ \left|S^\prime\left(x,\xi\right)-\mathcal{Q}\left(\xi\right)\right| \leq \left|S^\prime\left(x,0\right) + M\left|\xi\right|\right|  \leq C_0+M\left|\xi\right|$, then $ d\mathcal{Q}\left(\xi\right) = A_\mathcal{Q}\xi $ satisfies
			\begin{equation*}
				\begin{split}
					\left\|A_\mathcal{Q}^{-1}\right\|_{\text{op}}^{-1}\left|\xi\right| &\leq \left|A_\mathcal{Q}\xi\right| = \left|-\rho^\prime\left(\left|\xi\right|\right)\frac{\xi}{\left|\xi\right|}\left(S^\prime\left(x,\xi_0\right)-\mathcal{Q}\left(\xi\right)\right) - \rho\left(\left|\xi\right|\right)\frac{\partial}{\partial\xi}\left(S^\prime\left(x,\xi\right)-\mathcal{Q}\left(\xi\right)\right)\right| \\
					&\leq \varepsilon\left|S^\prime\left(x,\xi\right)-\mathcal{Q}\left(\xi\right)\right| + \left|\frac{\partial}{\partial\xi}\left(S^\prime\left(x,\xi\right)-\mathcal{Q}\left(\xi\right)\right)\right| \leq \varepsilon\left(C_0+M\left|\xi\right|\right) + M,
				\end{split}
			\end{equation*}
			and so we indeed get
			\begin{equation*}
				\left|\xi\right| \leq \frac{\varepsilon C_0 + M}{\left\|A_\mathcal{Q}^{-1}\right\|_{\text{op}}^{-1}-\varepsilon M} = \frac{\frac{C_0}{2R-\delta}+M}{\left\|A_\mathcal{Q}^{-1}\right\|_{\text{op}}^{-1} - \frac{M}{2R-\delta}} \leq \frac{\frac{C_0}{2M\left\|A_\mathcal{Q}^{-1}\right\|_{\text{op}}}+M}{\left\|A_\mathcal{Q}^{-1}\right\|_{\text{op}}^{-1} - \frac{M}{2M\left\|A_\mathcal{Q}^{-1}\right\|_{\text{op}}}} = \frac{C_0}{M}+2M\left\|A_\mathcal{Q}^{-1}\right\|_{\text{op}} < R_f^-.
			\end{equation*}
		\end{proof}
		Finally, this claim shows that $ S $ identifies with $ S^\prime $ on the set of fiber critical points $ \Sigma_{S} = \Sigma_{S^\prime} $, so they generate the same Lagrangian which concludes the proof.
	\end{proof}
	
	\paragraph[calc quad radii]{}
	In our context, we denote
	\begin{equation*}
		M = \sqrt{2}T\left(\left(N-1\right)+\sum_{j=1}^{N-1}\sqrt{j}\right),
	\end{equation*}
	then $ M $ is a uniform bound on $ \left|\frac{\partial}{\partial\xi}\left(S^\prime - \mathcal{Q}\right)\right| $, $ \mathcal{Q} $ is independent of the base variable, and since all $ S_j $ are compactly supported inside a ball of radius $ R $ and satisfy $ \left|\nabla S_j\right|\leq T $ we have
	\begin{equation*}
		\sup_{\left(q_N,p_0\right)\in\RRnn}\left|S^\prime\left(q_N,p_0,0\right)\right| = 	\sup_{\left(q_N,p_0\right)\in\RRnn}\left|\sum_{j=1}^NS_j\left(q_N,p_0\right)\right| \leq \sum_{j=1}^N\max \left|S_j\right| \leq N\cdot T\cdot R = C_0,
	\end{equation*}
	so we denote
	\begin{equation*}
		R_f^- = \frac{C_0}{M} + M \qquad R_f^+ = \frac{C_0}{M} + 2M,
	\end{equation*}
	and define $ \rho:\RR_{\geq0}\to\RR_{\geq0} $ to be a smooth function which is equal to
	\begin{equation*}
		\rho\left(r\right) =
		\begin{cases}
			1 & 0\leq r\leq R_f^- \\
			\frac{R_f^+-r}{M} & R_f^-\leq r \leq R_f^+\\
			0 & R_f \leq r
		\end{cases},
	\end{equation*}
	outside a small neighborhood of $ R_f^- $ and $ R_f^+ $ thus \Cref{lemma:GFQI} implies that the function $ S $ given by \Cref{eq:defGFQI} is a GFQI which identifies with $ \mathcal{Q} $ outside a ball of radius $ R_f^+ $ in the fiber direction. Finally, we denote
	\begin{equation*}
		R_b = \sqrt{2n}\left(R+T\right) + 	2\sqrt{n\left(N-1\right)}R_f^+,
	\end{equation*}
	and claim that $ S = \mathcal{Q} $ outside a ball of radius $ R_b $. indeed, for each $ j $, if we denote $ \varphi_j\left(q,p\right) = \left(Q,P\right) $ then $ \left|\left(q,p\right)-\left(Q,p\right)\right| = \left|q-Q\right| \leq T $ and we get
	\begin{equation*}
		\left|\left(Q,p\right)\right|\geq R+T \implies \left|\left(q,p\right)\right| \geq \left|\left(Q,p\right)\right| - T \geq R,
	\end{equation*}
	so since $ R $ contains the compact support of $ \varphi_j $ we see that $ S_j $ vanishes outside a ball of radius $ R+T $. It follows that for $ \left(q_N,p_0,\xi\right) $ such that $ \left|\xi\right| = \left|\left(\xi_1^-,\dots,\xi_{N-1}^-,\xi_1^+\dots,\xi_{N-1}^+\right)\right|\leq R_f^+ $ and $ \left|\left(q_n,p_0\right)\right|\geq R_b $ we have the following two options:
	\begin{enumerate}
		\item $ \left|p_0\right| = \left|\left(\left(p_0\right)_1,\dots,\left(p_0\right)_n\right)\right| \geq \frac{R_b}{\sqrt{2}} $, which means that there exists a coordinate $ 1\leq l \leq n  $ such that $ \left|\left(p_0\right)_l\right|\geq \frac{R_b}{\sqrt{2n}} $ therefore for all $ j=1,\dots,N $
		\begin{equation*}
			\begin{split}
				\left|\left(p_0\right)_l+\left(\xi_{j-1}^-\right)_l + \left(\xi_{j-1}^+\right)_l\right| &\geq \left|\left(p_0\right)_l\right| - 	\max_{\left|\xi\right|\leq R_f^+}\left|\left(\xi^-_{j-1}\right)_l + \left(\xi^+_{j-1}\right)_l\right| \\
				&\geq  \frac{R_b}{\sqrt{2n}} - \sqrt{2}R_f^+ \geq  R+T.
			\end{split}
		\end{equation*}
		
		\item $ \left|q_N\right| = \left|\left(\left(q_N\right)_1,\dots,\left(q_N\right)_n\right)\right| \geq \frac{R_b}{\sqrt{2}} $, which means that there exists a coordinate $ 1\leq l \leq n $ such that $ \left|\left(q_N\right)_l\right|\geq \frac{R_b}{\sqrt{2n}} $ therefore for all $ j=1,\dots,N $
		\begin{equation*}
			\begin{split}
				\left|\left(q_N\right)_l+\sum_{k=j}^N\left(\xi^-_k\right)_l-\sum_{k=j}^N\left(\xi^+_k\right)_l\right| &\geq 	\left|\left(q_N\right)_l\right| - \max_{\left|\xi\right|\leq R_f^+}\left|\sum_{k=1}^{N-1}\left(\xi^-_k\right)_l-\left(\xi^+_k\right)_l\right| \\
				&\geq \frac{R_b}{\sqrt{2n}} - \sqrt{2\left(N-1\right)}R_f^+ \geq R+T.
			\end{split}
		\end{equation*} 
	\end{enumerate}
	So, in both cases we see that $ S_j\left(q_N+\sum_{k=j}^N\xi^-_k-\sum_{k=j}^N\xi^+_k,p_0+\xi^-_{j-1}+\xi^+_{j-1}\right) = 0 $ for all $ j $ and thus $ S^\prime\left(q_N,p_0,\xi\right) = \mathcal{Q}\left(\xi\right) $. This shows that outside $ \BB_{R_b} $, the function $ S $ becomes just $ \mathcal{Q} $ and therefore it is indeed a GFQI with quadratic radii $ R_b,R_f^+ $, in order to make sure the fiber radius also satisfies \Cref{eq:Rflarge} we set
	\begin{equation*}
		R_f = \sqrt{\left(R_f^+\right)^2-\sum_{j=1}^N\min S_j},
	\end{equation*}
	this concludes the construction of a GFQI $ S $ of $ \varphi = \varphi_N\circ\dots\circ\varphi_1 $ that has quadratic radii $ R_b,R_f $ explicitly given as functions of $ N,T,R $ and $ \min S_j $.

	\subsection{GF homology on a compact domain} \label{sec:GFcompdomain}
	Our next goal is to show that $ G_\ast^{\left(a,b\right]}\left(\varphi\right) $ is equal to the relative homology of sublevel sets of $ S $ restricted to a compact set, so we may approximate it numerically. This is done by following the classical construction described by Milnor (see \cite[section 3]{Mil68}) which relates the homotopy type of sublevel sets near a critical level of a compact manifold to the Morse index of the function, then in our case (which  is not compact), considering the compact domain where $ S $ is not quadratic as a kind of "critical set" and relating the homotopy type of sublevel sets to those of the part which intersects the compact domain.
	
	\paragraph[deformation on a fiber]{}
	Recall that our GFQI $ S $ is defined on a vector bundle $ p:E = \SS^{2n}\times \RR^d \to \SS^{2n} $, then for $ x\in\SS^{2n} $, we examine a single fiber $ p^{-1}\left(x\right) = \left\{x\right\}\times\RR^d $ and introduce the following notations:
	\begin{align}
		E^c_x &= E^c\cap p^{-1}\left(x\right) = \left\{\xi\in\RR^d;S\left(x,\xi\right)\leq c\right\} \subset p^{-1}\left(x\right) \nonumber \\
		D^i &= \left\{\left(\xi^-,0\right)\in\RR^d;\left|\xi^-\right|\leq R_f\right\} \subset \BB_{R_f} \label{eq:defD^i}.
	\end{align}
	Then, following similar arguments as in \cite[section 3]{Mil68}, we show that $ E^{-\infty}_x\cup \left(E^b_x\cap\BB_{R_f}\right) $ is a deformation retract of $ E^b_x $ and use excision to remove the set $ E^{-\infty}_x\setminus\partial D^i $ which is a deformation retract of an excisable set, thus getting
	\begin{equation} \label{eq:defonfiber}
		\begin{split}
			H_*\left(E^b_x,E^{-\infty}_x\right) &= H_*\left(E^{-\infty}_x\cup \left(E^b_x\cap\BB_{R_f}\right), E^{-\infty}_x\right) \\
			&= H_*\left(\partial D^i\cup \left(E^b_x\cap\BB_{R_f}\right),\partial D^i\right) \\
			&= H_*\left(E^b_x\cap\BB_{R_f},\partial D^i\right).
		\end{split}	
	\end{equation}
	
	\paragraph[deformation on the bundle]{}
	Finally, \Cref{eq:defonfiber} implies that all the "interesting" data required to calculate $ H_*\left(E^b_x,E^{-\infty}_x\right) $ is contained in a compact set and we also show that these deformations always preserve $ \BB_{R_f} $. So, since outside $ \BB_{R_f} $ the level sets of $ S $ are all the same we may deform all fibers simultaneously which leads us to the desired result.
	\begin{lemma} \label{lemma:compdomforG^b}
		Let $ S:\SS^{2n}\times\RR^d\to\RR $ be a diagonalized GFQI for $ \varphi\in\Ham_c\left(\RRnn,\omega_0\right) $ with quadratic radii $ R_b,R_f > 0 $ and quadratic index $ i $. Then for large enough $ R_f $ (i.e, under the assumption in \Cref{eq:Rflarge}), we have
		\begin{equation*}
			G^{\left(-\infty,b\right]}_{\ast}\left(\varphi\right) = H_{\ast+i}\left(E^b,E^{-\infty}\right)  = H_{\ast+i}\left(E^b\cap\left(\SS^{2n}\times\BB_{R_f}\right),\SS^{2n}\times\partial D^i\right),
		\end{equation*}
		where $ E $ and $ E^c $ are as in \Cref{def:GFhomology}.
	\end{lemma}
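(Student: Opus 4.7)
The plan is to globalize the fiberwise argument already sketched in \Cref{eq:defonfiber}. The key structural fact, established in \Cref{sec:obtainingGFQI}, is that the GFQI $S$ coincides with the fiber-constant quadratic form $\mathcal{Q}(\xi)=-|\xi^-|^2+|\xi^+|^2$ on the complement of the compact set $\SS^{2n}\times\BB_{R_f}$. As a result, all deformations constructed in a single fiber depend only on $\xi$ and glue trivially to a continuous deformation over the base $\SS^{2n}$.

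First I would construct a strong deformation retraction of the pair $(E^b,E^{-\infty})$ onto the pair $(E^{-\infty}\cup(E^b\cap(\SS^{2n}\times\BB_{R_f})),\,E^{-\infty})$. On each fiber, the region $\{|\xi|\geq R_f\}\cap E^b$ is governed solely by $\mathcal{Q}$, and the diagonalized form makes the sublevel sets there explicit. Flowing along a suitably cut-off, reparametrized gradient of $-\mathcal{Q}$ in the $(\xi^-,\xi^+)$-directions, I can push the part of $E^b$ lying outside $\SS^{2n}\times\BB_{R_f}$ either down into $E^{-\infty}$ (by decreasing $|\xi^+|$ and increasing $|\xi^-|$ along quadratic level curves) or onto the sphere $\SS^{2n}\times\partial\BB_{R_f}$, while leaving $E^b\cap(\SS^{2n}\times\BB_{R_f})$ pointwise fixed. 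Because the resulting time-$1$ map and homotopy are formulas in $\xi$ alone, they extend by $(\mathrm{id}_{\SS^{2n}},\cdot)$ to a continuous global deformation, and by construction preserve both $E^b$ and $E^{-\infty}$.

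Next I would apply excision, using that $\SS^{2n}\times\partial D^i$ is a deformation retract of $E^{-\infty}$ in the fiber direction: on each fiber, $E^{-\infty}_x=\{\mathcal{Q}(\xi)\leq -R_f^2\}$ retracts onto $\{(\xi^-,0):|\xi^-|=R_f\}=\partial D^i$ by the radial flow along $\xi^+\mapsto 0$ followed by rescaling $\xi^-$ to norm $R_f$, again a formula independent of $x$. Setting $U:=E^{-\infty}\setminus(\SS^{2n}\times\partial D^i)$, the closure $\overline{U}$ lies in the interior of $E^{-\infty}$ inside $E^{-\infty}\cup(E^b\cap(\SS^{2n}\times\BB_{R_f}))$, so excision gives
\begin{equation*}
H_\ast\!\left(E^{-\infty}\cup(E^b\cap(\SS^{2n}\times\BB_{R_f})),\,E^{-\infty}\right)\cong H_\ast\!\left(E^b\cap(\SS^{2n}\times\BB_{R_f}),\,\SS^{2n}\times\partial D^i\right).
\end{equation*}
Shifting the degree by the quadratic index $i$ and invoking \Cref{def:GFhomology} then yields the claim.

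The main obstacle is the careful bookkeeping that the fiberwise flow never increases the value of $S$ above $b$ and never escapes $E^b$, and that the various cut-offs can be chosen to depend only on $|\xi|$ so as to be compatible across fibers. This is routine once one notes that on the relevant region $S=\mathcal{Q}(\xi)$ exactly, so the Morse-theoretic argument of Milnor \cite[section 3]{Mil68} applies verbatim in each fiber with the same choices, and the globalization amounts to the observation that a product with $\mathrm{id}_{\SS^{2n}}$ of a continuous fiber deformation is continuous on the total space.
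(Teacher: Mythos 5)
Your high-level plan is the same as the paper's: work fiberwise, build a deformation retraction of the pair $(E^b_x,E^{-\infty}_x)$ onto $(E^{-\infty}_x\cup(E^b_x\cap\BB_{R_f}),E^{-\infty}_x)$, excise, and then globalize using the fact that outside $\SS^{2n}\times\BB_{R_f}$ the function $S$ equals the fiber-constant form $\mathcal{Q}$. However, two steps in your sketch are gaps rather than routine bookkeeping.

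First, the deformation retraction is not actually constructed, and your description of it is internally inconsistent: the negative gradient flow of $\mathcal{Q}$ is $\dot\xi^-=2\xi^-,\ \dot\xi^+=-2\xi^+$, which is transverse to the level sets of $\mathcal{Q}$, not ``along quadratic level curves.'' More importantly, that flow does not monotonically decrease $|\xi|$: when $|\xi^-|>|\xi^+|$ it increases $|\xi|$, so trajectories of points just outside $\BB_{R_f}$ may never hit $\partial\BB_{R_f}$, while others do; making the resulting stopping map into a continuous strong deformation retraction that simultaneously stays inside $E^b_x$ is precisely what the paper has to work for. The paper introduces an auxiliary function $F(\xi)=\mathcal{Q}(\xi)-\mu(|\xi|^2)$ with a carefully chosen bump $\mu$, arranges $E^b_x\subset F^b$ and $\BB_{R_f}\subset F^a$, and then passes through the intermediate set $F^a$ in three retractions; a nontrivial point there (their Claims 4 and 6) is a computation showing $\tfrac{d}{dt}\mathcal{Q}(\phi_{-t}(\xi))\leq 0$ along the reverse $F$-flow and along the subsequent rescaling, which is what guarantees the flows preserve $E^b_x$. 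None of this is ``routine once one notes $S=\mathcal{Q}$'': the constraint is exactly to keep the sublevel set $\{S\leq b\}$ invariant, and a naive gradient flow of $-\mathcal{Q}$ does not give it.

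Second, your excision step is incorrect as stated. With $X=E^{-\infty}\cup\left(E^b\cap(\SS^{2n}\times\BB_{R_f})\right)$, $A=E^{-\infty}$ and $U=E^{-\infty}\setminus(\SS^{2n}\times\partial D^i)$, one has $\SS^{2n}\times\partial D^i\subset\overline{U}$ (approach a point of $\partial D^i$ by rescaling $\xi^-$ outward), but $\SS^{2n}\times\partial D^i\not\subset\Int_X(A)$: every neighborhood of a point of $\partial D^i$ in $X$ meets $E^b\cap(\SS^{2n}\times\BB_{R_f})\setminus E^{-\infty}$, since $\mathcal{Q}=-R_f^2<b$ there. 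So $\overline{U}\not\subset\Int_X(A)$ and the excision axiom does not apply directly. The paper acknowledges this and instead observes that $E^{-\infty}_x\setminus\partial D^i$ is a deformation retract of a set that \emph{is} excisable (the standard Milnor maneuver of excising a slightly shrunk open set and retracting the collar onto $\partial D^i$). This is fixable, but your one-line justification is false and must be replaced by that argument.
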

	
	\begin{proof}
		Recall the following notations:
		\begin{equation*}
			\begin{split}
				&\SS^{2n} = \RRnn\cup\left\{\infty\right\}\text{ and } E=\SS^{2n}\times\RR^d, \\
				&S:E\to\RR\text{ is a diagonalized GFQI with quadratic radii } R_b,R_f\text{ and quadratic part }\mathcal{Q}, \\
				&\mathcal{Q}:\RR^d\to\RR\text{ such that }\mathcal{Q}\left(\xi\right) = -\xi_1^--\dots-\xi_i^-+\xi_{i+1}^++\dots+\xi_d^+, \\
				&E^c = \left\{\left(x,\xi\right)\in E; S\left(x,\xi\right) \leq c\right\}\text{ and }E^{-\infty} = E^a\text{ for } a \leq -R_f^2, \\
				&E^c_x = E^c\cap p^{-1}\left(x\right) = \left\{\xi\in\RR^d;S\left(x,\xi\right)\leq c\right\} \subset p^{-1}\left(x\right), \\
				&D^i = \left\{\left(\xi^-,0\right)\in\RR^d;\left|\xi^-\right|\leq R_f\right\} \subset \BB_{R_f}.
			\end{split}
		\end{equation*}
		Then, let $ -R_f^2<b\notin\Crit\left(S\right) $ and set $ a=-R_f^2 $, we start with a single fiber $ p^{-1}\left(x\right) = \left\{x\right\}\times\RR^d $ and show that the set $ E^{-\infty}_x\cup \left(E^b_x\cap\BB_{R_f}\right) $ is a deformation retract of $ E^b_x $ by considering a "bump" function $ \mu:\RR_{\geq0}\to\RR_{\geq0} $ such that
		\begin{equation*}
			\begin{cases}
				\mu\left(r\right) = \mu_0 > 2R_f^2 & r < R_f^2 \\
				\mu\left(r\right) = 0 & r\geq 5R_f^2,
			\end{cases}
		\end{equation*}
		and $ -1 < \mu^\prime\left(r\right) \leq  0 $. We define a function $ F:\RR^d\to\RR $ by $ F\left(\xi\right) = \mathcal{Q}\left(\xi\right) - \mu\left(\left|\xi\right|^2\right) $ and denote its sub-level sets by $ F^c = \left\{\xi ; F\left(\xi\right) \leq c\right\} $. Now, the result follows from a series of claims:
		\begin{claim}
			$ E_x^b \subset F^b $ and $ \BB_{R_f} \subset F^a $.
		\end{claim}
		\begin{proof}[Proof of claim]
			From the definition of $ F $ it follows that $ F\leq S $ and thus $ E^b \subset F^b $ which proves the first part. For the second part, If $ \xi\in \BB_{R_f} $ then $ \mathcal{Q}\left(\xi\right) \leq R_f^2 $ and $ \mu\left(\left|\xi\right|^2\right) > 2R_f^2 $ so
			\begin{equation*}
				F\left(\xi\right) = \mathcal{Q}\left(\xi\right) - \mu\left(\left|\xi\right|\right) < R_f^2 - 2R_f^2 = -R_f^2 = a,
			\end{equation*}
			thus $ \xi \in F^a $ as claimed.
		\end{proof}
		\begin{claim}
			$ F $ has no critical points in $ F^{-1}\left(\left[a,b\right]\right) $.
		\end{claim}
		\begin{proof}[Proof of claim]
			Differentiating $ F $ gives
			\begin{equation*}
				\begin{split}
					\frac{\partial F}{\partial\left(\xi_1^-,\dots \xi_i^-\right)} &= 2\left(\xi_1^-,\dots 	\xi_i^-\right)\left(-1-\mu^\prime\left(\left|\xi\right|^2\right)\right) \\
					\frac{\partial F}{\partial\left(\xi_{i+1}^+,\dots \xi_d^+\right)} &= 2\left(\xi_{i+1}^+,\dots 	\xi_d^+\right)\left(1-\mu^\prime\left(\left|\xi\right|^2\right)\right),
				\end{split}
			\end{equation*}
			and since $ -1-\mu^\prime\left(\left|\xi\right|^2\right) < 0 $ and $ 1-\mu^\prime\left(\left|\xi\right|^2\right) \geq 1 $ we see that $ F $ has a critical point only at the origin which as we saw in the last claim satisfies $ F\left(0\right) < a $ and so $ F $ has no critical points in $ F^{-1}\left(\left[a,b\right]\right) $.
		\end{proof}
		\begin{claim}
			$ F^a $ is a deformation retract of $ F^b $.
		\end{claim}
		\begin{proof}[Proof of claim]
			Following the same argument as in the proof of \Cref{lemma:deformrelativesmallvalues} (which are basically the classical Morse theory arguments as in \cite[section 3]{Mil68}), the set $ F^{-1}\left(\left[a,b\right]\right) $ has no critical points and $ F = \mathcal{Q} $ outside a compact set, so we consider the compactification $ \RR^d\cup\left\{\infty\right\} $ and take a compact neighborhood $ K $ of $ F^{-1}\left(\left[a,b\right]\right)\cup\left\{\infty\right\} $ (which is now compact), then the vector field $ \rho\nabla F $ for a smooth function $ \rho $ that vanish outside $ K $ and is equal to $ \frac{1}{\left|\nabla F\right|^2} $ on $ F^{-1}\left(\left[a,b\right]\right) $ is equal to $ \frac{\nabla\mathcal{Q}}{\left|\nabla \mathcal{Q}\right|^2} $ near $ \infty $ so it can be smoothly extended to $ \infty $ by $ 0 $ and we get a smooth vector field that vanishes outside a compact set. This vector field then generates a flow $ \phi_t $ that satisfied $ \frac{\diff}{\diff t}\big\vert_{t_0}F\left(\phi_t\left(x,\xi\right)\right) = 1  $ as long as $ \phi_{t_0}\left(x,\xi\right)\in F^{-1}\left(\left[a,b\right]\right) $ and is stationary at $ \infty $, so $ \phi_{b-a}\left(F^a\right) = F^b $ and we define the map $ r_t:\left[0,1\right]:F^b\to F^b $ by
			\begin{equation*}
				r_t\left(\xi\right) = \begin{cases}
					\xi & F\left(\xi\right)\leq a \\
					\phi_{t\left(a-F\left(\xi\right)\right)} & a\leq F\left(x,\xi\right)\leq b,
				\end{cases}
			\end{equation*}
			which is a retraction of $ F^b $ onto $ F^a $ as claimed.
		\end{proof}
		\begin{claim}
			$ F^a\cap E_x^b $ is a deformation retract of $ E_x^b $.
		\end{claim}
		\begin{proof}[Proof of claim]
			We first note that the reverse flow $ \phi_{-t} $ from the previous claim takes $ F^b $ to $ F^a $ (when $ t\in\left[0,b-a\right] $) and it is generated by the vector field $ -\frac{\nabla F}{\left|\nabla F\right|^2} $ as long as $ \phi_{-t}\left(\xi\right)\in F^{-1}\left(\left[a,b\right]\right) $ so it also satisfies
			\begin{equation*}
				\begin{split}
					\frac{\diff}{\diff t}\big\vert_{t_0}\mathcal{Q}\left(\phi_{-t}\left(\xi\right)\right) &= 	d\mathcal{Q}_{\phi_{-t_0}\left(\xi\right)}\left(\frac{\diff}{\diff t}\big\vert_{t_0}\phi_{-t}\left(\xi\right)\right) = d\mathcal{Q}_{\phi_{-t_0}\left(\xi\right)}\left(-\frac{\nabla F\left(\phi_{-t_0}\left(\xi\right)\right)}{\left|\nabla F\left(\phi_{-t_0}\left(\xi\right)\right)\right|^2}\right) \\
					&=\frac{1}{\left|\nabla F\left(\phi_{-t_0}\left(\xi\right)\right)\right|^2}\left\langle\nabla\mathcal{Q}\left(\phi_{-t_0}\left(\xi\right)\right),-\nabla\mathcal{Q}\left(\phi_{-t_0}\left(\xi\right)\right)+2\mu^\prime\left(\left|\phi_{-t_0}\left(\xi\right)\right|^2\right)\phi_{-t_0}\left(\xi\right)\right\rangle \\
					&= \frac{-\left|\nabla \mathcal{Q}\left(\phi_{t_0}\left(\xi\right)\right)\right|^2 + 	2\mu^\prime\left(\left|\phi_{t_0}\left(\xi\right)\right|^2\right)\left\langle\nabla\mathcal{Q}\left(\phi_{t_0}\left(\xi\right)\right),\phi_{t_0}\left(\xi\right)\right\rangle}{\left|\nabla F\left(\phi_{-t_0}\left(\xi\right)\right)\right|^2}.
				\end{split}
			\end{equation*}
			Next, since $ \mathcal{Q} $ is a diagonalized non-degenerate quadratic form we also have $ 2\left|\phi_{t_0}\left(\xi\right)\right| = \left| \nabla\mathcal{Q}\left(\phi_{t_0}\left(\xi\right)\right)\right| $ so we conclude that
			\begin{equation*}
				\left|2\mu^\prime\left(\left|\phi_{t_0}\left(\xi\right)\right|^2\right)\left\langle\nabla\mathcal{Q}\left(\phi_{t_0}\left(\xi\right)\right),\phi_{t_0}\left(\xi\right)\right\rangle\right| \leq 2 \left|\nabla \mathcal{Q}\left(\phi_{t_0}\left(\xi\right)\right)\right|\left|\phi_{t_0}\left(\xi\right)\right| = \left|\nabla \mathcal{Q}\left(\phi_{t_0}\left(\xi\right)\right)\right|^2
			\end{equation*}
			which means that $ \frac{\diff}{\diff t}\big\vert_{t_0}\mathcal{Q}\left(\phi_t\left(\xi\right)\right) \leq 0 $ and thus the values of $ \mathcal{Q} $ decreases along the deformation from $ F^b $ to $ F^a $. Now we take the deformation $ \phi_t $ and restrict it to the set $ E_x^b $, since this flow is a deformation retract onto $ F^a $ which contains the ball $ \BB_{R_f} $ then we know that all points inside $ \BB_{R_f} $ are stationary and therefore the restriction to $ E_x^b $ only moves points where $ S = \mathcal{Q} $ so we may say that this flow also decreases $ S $. We deduce that for all $ t \geq 0 $, $ \phi_t\left(E_x^b\right) \subset E_x^b $. Finally, we see that $ \phi_1\left(E_x^b\right) \subset F^a $ and of course $ \phi_1\left(F^a\cap E_x^b\right) = F^a\cap E_x^b $ so we can say that $ F^a \cap E_x^b $ is a deformation retract of $ E_x^b $ as claimed.
		\end{proof}
		\begin{claim}
			$ E_x^a \cup \BB_{R_f} $ is a deformation retract of $ F^a $.
		\end{claim}
		\begin{proof}[Proof of claim]
			We define the following function $ s_t\left(\xi\right):\left[0,1\right]\times\RR^d\to\RR $ by
			\begin{equation*}
				s_t\left(\xi\right) = \begin{cases}
					\sqrt{\left(1-t\right) + t\cdot \frac{\left|R_f^2-\left|\xi^-\right|^2\right|}{\left|\xi^+\right|^2}} & R_f^2 - \left|\xi^+\right|^2 \leq \left|\xi^-\right|^2 \leq R_f^2+\left|\xi^+\right|^2 \\
					1 & \text{o.w}
				\end{cases},
			\end{equation*}
			and using $ s_t $ we define a retraction $ \psi_t\left(\xi\right):\left[0,1\right]\times\RR^d\rightarrow\RR^d $ by
			\begin{equation} \label{eq:deformationpsi}
				\psi_t\left(\xi\right) = \left(\xi^-,s_t\left(\xi\right)\xi^+\right),
			\end{equation}
			which satisfies the following properties.
			\begin{equation*}
				\begin{split}
					\psi_0 &= \Id \\
					\psi_1\left(\xi\right) &= \begin{cases}
						\xi & \xi\in E_x^a\cup \BB_{R_f} \\
						\xi & \xi\ \in F^a \cap \BB_{5R_f}^\complement = E_x^a \cap \BB_{5R_f}^\complement \\
						\left(\xi^-,\sqrt{\frac{R_f^2-\left|\xi^-\right|^2}{\left|\xi^+\right|^2}}\xi^+\right) & \xi\in F^a\cap\left(\BB_{5R_f}\setminus \	BB_{R_f}\right)\text{ and }\left|\xi^-\right|^2 \leq R_f^2 \\
						\left(\xi^-,\sqrt{\frac{\left|\xi^-\right|^2 - R_f^2}{\left|\xi^+\right|^2}}\xi^+\right) & \xi\in F^a\cap\left(\BB_{5R_f}\setminus \	BB_{R_f}\right)\text{ and } R_f^2 < \left|\xi^-\right|^2.
					\end{cases}
				\end{split}
			\end{equation*}
			Note that
			\begin{equation*}
				\begin{split}
					\left|\left(\xi^-,\sqrt{\frac{R_f^2-\left|\xi^-\right|^2}{\left|\xi^+\right|^2}}\xi^+\right)\right|^2 &= \left|\xi^-\right|^2+\frac{R_f^2-\left|\xi^-\right|^2}{\left|\xi^+\right|^2}\left|\xi^+\right|^2 = R_f^2 \\
					\mathcal{Q}\left(\xi^-,\sqrt{\frac{\left|\xi^-\right|^2-R_f^2}{\left|\xi^+\right|^2}}\xi^+\right) &= -\left|\xi^-\right|^2+\frac{\left|\xi^-\right|^2-R_f^2}{\left|\xi^+\right|^2}\left|\xi^+\right|^2 = -R_f^2 = a,
				\end{split}
			\end{equation*}
			thus $ \psi_1\left(F^a\right) = E_x^a\cup \BB_{R_f} $ as claimed.
		\end{proof}
		\begin{claim}
			$ E_x^a\cup \left(E_x^b\cap\BB_{R_f}\right) = \left(E_x^a \cup \BB_{R_f}\right) \cap E_x^b $ is a deformation retract of $ F^a\cap E_x^b $.
		\end{claim}
		\begin{proof}
			For the deformation $ \psi_t $ defined in \Cref{eq:deformationpsi} we have for all $ \xi\in\BB_{R_f} $ that $ \frac{\diff}{\diff t}\big\vert_{t_0}\mathcal{Q}\left(\psi_t\left(\xi\right)\right) = 0 $ since $ \psi_t = \Id $ in this region, and for $ \xi\notin \BB_{R_f} $ we have
			\begin{equation*}
				\frac{\diff}{\diff t}\big\vert_{t_0}\mathcal{Q}\left(\psi_t\left(\xi\right)\right) = \left|\xi^+\right|^2\cdot\frac{\diff}{\diff t}\big\vert_{t_0}s_t^2\left(\xi\right) = \left|\xi^+\right|^2\left(-1+\frac{\left|R_f^2-\left|\xi^-\right|^2\right|}{\left|\xi^+\right|^2}\right),
			\end{equation*}
			 so since in this case $ R_f^2 \leq\left|\xi\right|^2 = \left|\xi^-\right|^2+\left|\xi^+\right|^2 $ and we get
			\begin{equation*}
				\frac{\diff}{\diff t}\big\vert_{t_0}\mathcal{Q}\left(\psi_t\left(\xi\right)\right) = \left|\xi^+\right|^2\left(-1+\frac{\left|R_f^2-\left|\xi^-\right|^2\right|}{\left|\xi^+\right|^2}\right) \leq 0.
			\end{equation*}
			So, we see that $ \mathcal{Q} $ decreases along the deformation $ \psi_t $ and thus we can restrict it to $ F^a\cap E_x^b $ and get a deformation onto $ \left(E_x^a \cup \BB_{R_f}\right) \cap E_x^b = E^a\cup\left(E_x^b\cap\BB_{R_f}\right) $ as claimed.
		\end{proof}
	
	Combining these claims, we see that indeed $ E^{-\infty}_x\cup \left(E^b_x\cap\BB_{R_f}\right) = E^a\cup\left(E_x^b\cap\BB_{R_f}\right) $ is a deformation retract of $ F^a\cap E_x^b  $ which in turn is a deformation retract of $ E^b_x $. Then, as we mentioned in \Cref{eq:defonfiber}, we use excision to remove the set $ E_a^a \setminus \partial D^i $ which is a deformation retract of an excisable set thus getting
	\begin{equation*}
		\begin{split}
			H_*\left(E^b_x,E^{-\infty}_x\right) &= H_*\left(E^{-\infty}_x\cup \left(E^b_x\cap\BB_{R_f}\right), E^{-\infty}_x\right)\\
			&= H_*\left(\partial D^i\cup \left(E^b_x\cap\BB_{R_f}\right),\partial D^i\right) \\
			&= H_*\left(E^b_x\cap\BB_{R_f},\partial D^i\right).
		\end{split}	
	\end{equation*}

	We can now use all the deformations above for the entire bundle $ E $ since they all preserve $ \BB_{R_f} $ and outside this ball the sets $ E_x^b $ are the same for all $ x\in\SS^{2n} $ whenever $ b \geq a $ (of course we also have that the sets $ E_x^a $ is the same for all fibers since they are always outside $ \BB_{R_f} $ by \Cref{eq:Rflarge}), so we get
	\begin{equation*}
		G^{\left(-\infty,b\right]}_\ast\left(\varphi\right) = H_{\ast+i}\left(E^b,E^{-\infty}\right) = H_{\ast+i}\left(E^b\cap\left(\SS^{2n}\times\BB_{R_f}\right),\SS^{2n}\times \partial D^i\right),
	\end{equation*}
	which proves the lemma.
	\end{proof}
	
	With this Lemma we see that in order to calculate $ G^{\left(-\infty,b\right]}_{\ast}\left(\varphi\right) $, one only needs to know the function $ S $ on a compact set, which is then possible to approximate using a finite sample.

	\subsection{Step 1 - Constructing a cell complex} \label{sec:step1}
	Since $ S $ identifies with the quadratic form $ \mathcal{Q} $ outside $ \BB_{R_b} $ we may consider $ S $ as a function on $ \BB_{R_b}/\partial\BB_{R_b}\times\RR^d $ instead of our domain $ \left(\RRnn\cup\left\{\infty\right\}\right)\times\RR^d $. Then the idea is to start with two lattices in $ \RRnn $ and $ \RR^d $, define cubical complexes from these lattices which are "close" to $ \BB_{R_b} $ and $ \BB_{R_f} $, use algebraic tools to manipulate the cellular complexes into the desired quotient and product, so we get a finite cellular complex homotopic to $ \left(\BB_{R_b}/\partial\BB_{R_b}\times\BB_{R_f},\BB_{R_b}/\partial\BB_{R_b}\times \partial D^i\right) $ where $ \partial D^i \subset \partial\BB_{R_f} $ is as in \Cref{eq:defD^i} and this homotopy is also close ot the identity.
	\begin{enumerate}
		\item For the vertex set
		\begin{equation*}
			X^0 = \frac{1}{m}\ZZ^{2n}\cap \BB_{R_b+\frac{\sqrt{2n}}{m}},
		\end{equation*}
		consider the full cubical complex $ X $ generated by the vertex set $ X^0 $ with the lattice structure of $ \ZZ^{2n} $. We denote the cells of $ X $ as $ \left\{b_\alpha^k\right\}_{k\in\left\{0,\dots,2n\right\},\alpha\in\mathcal{B}_k} $ and recall that the chain complex of the cellular homology $ H_k^{CW}\left(X;\ZZ_2\right) $ is given by
		\begin{equation*}
			\xymatrix@C=2pc
			{
				\dots \ar[r]^-{\partial_{k+1}^X}
				& \spn_{\ZZ_2}\left(\left\{b_\alpha^{k}\right\}_{\alpha\in\mathcal{B}_k}\right) \ar[r]^-{\partial_{k}^X}
				& \dots,
			}
		\end{equation*}
		and we view $ \spn_{\ZZ_2}\left(\left\{b_\alpha^{k}\right\}_{\alpha\in\mathcal{B}_k}\right) $ as the space generated by the indexing set $ \left\langle\mathcal{B}_k\right\rangle $ from now on. The matrices representing the boundary operators $ \partial_k^X $ with respect to the basis $ \left\{b_\alpha^{k}\right\}_{\alpha\in\mathcal{B}_k} $ and $ \left\{b_\alpha^{k-1}\right\}_{\alpha\in\mathcal{B}_{k-1}} $ may be computed explicitly using the \textit{Cellular Boundary Formula} (see \cite[140]{Hat02}). We denote these matrices as $ \partial_k^X $ as well.
		
		\item For the sub-complex $ \partial X $, we denote the cells of $ \partial X $ as $ \left\{b_\alpha^k\right\}_{k\in\left\{0,\dots,2n\right\},\alpha\in\mathcal{D}_k\subset\mathcal{B}_k} $ and compute the matrices representing the maps
		\begin{equation*}
			\begin{split}
				&q_k:\left\langle\mathcal{B}_k\right\rangle \to \left\langle\mathcal{B}_k\setminus\mathcal{D}_k\right\rangle \qquad 	1\leq k\leq 2n \\
				&q_0:\left\langle\mathcal{B}_0\right\rangle \to \left\langle\mathcal{B}_0\setminus\mathcal{D}_0\cup 	b^0_{\mathcal{D}_0}\right\rangle
			\end{split}
		\end{equation*}
		defined by the quotient map $ q:X\to X/\partial X $. Then, the matrices $ \overline{\partial}_k^X = q_{k-1}\cdot\partial_k^X $ are the boundary matrices of the chain complex whose homology is $ H_\ast^{CW}\left(X/\partial X;\ZZ_2\right) $. We denote the indexing sets for the cells of $ X/\partial X $ as $ \overline{\mathcal{B}}_k $ for $ 0\leq k\leq 2n $.
		
		\item For the vertex set
		\begin{equation*}
			Y^0 = \frac{1}{m}\ZZ^{2n\left(N-1\right)}\cap \BB_{R_f+\frac{\sqrt{2n\left(N-1\right)}}{m}},
		\end{equation*}
		consider the full cubical complex $ Y $ generated by the vertex set $ Y^0 $ with the lattice structure of $ \ZZ^{2n\left(N-1\right)} $. We denote the cells of $ Y $ as $ \left\{f_\beta^k\right\}_{k\in\left\{0,\dots,2n\left(N-1\right)\right\},\beta\in\mathcal{F}_k} $, and the matrices representing the boundary operators
		\begin{equation*}
			\partial_k^Y:\left\langle\mathcal{F}_k\right\rangle \to \left\langle\mathcal{F}_{k-1}\right\rangle \qquad 1\leq k \leq 	2n\left(N-1\right),
		\end{equation*}
		of the chain complex whose homology is $ H_\ast^{CW}\left(Y;\ZZ_2\right) $ with respect to the basis $ \left\{f^k_\beta\right\}_{\beta\in\mathcal{F}_k} $ and $ \left\{f^{k-1}_\beta\right\}_{\beta\in\mathcal{F}_{k-1}} $ may be computed explicitly using the cellular boundary formula. We denote these matrices as $ \partial_k^Y $ as well.
		
		\item For the sub-complex $ Y_0 $ which consists of all cells whose vertices belong to
		\begin{equation*}
			\left\{\left(\xi_1^-,\dots,\xi_{N-1}^-,0\dots,0\right)\in\frac{1}{m}\ZZ^{2n\left(N-1\right)}\right\}\cap\partial Y,
		\end{equation*}
		we denote the cells of $ Y_0 $ as $ \left\{f_\beta^k\right\}_{k\in\left\{0,\dots,2n\left(N-1\right)\right\},\beta\in\widetilde{\mathcal{F}}_k\subset\mathcal{F}_k} $ and restrict the matrices $ \partial_k^Y $ to
		\begin{equation*}
			\widetilde{\partial}_k^Y:\left\langle\mathcal{F}_k\setminus\widetilde{\mathcal{F}}_k\right\rangle\to\left\langle\mathcal{F}_{k-1}\setminus\widetilde{\mathcal{F}}_{k-1}\right\rangle \qquad 1\leq k \leq 2n\left(N-1\right).
		\end{equation*}
		So the restricted matrices $ \widetilde{\partial}_k^Y $ represent the boundary operators of the relative chain complex whose homology is $ H_\ast^{CW}\left(Y,Y_0;\ZZ_2\right) $ with respect to the basis $ \left\{f^k_\beta\right\}_{\beta\in\mathcal{F}_k\setminus\widetilde{\mathcal{F}}_k} $ and $ \left\{f^{k-1}_\beta\right\}_{\beta\in\mathcal{F}_{k-1}\setminus\widetilde{\mathcal{F}}_{k-1}} $. We denote the indexing sets $ \mathcal{F}_k\setminus\widetilde{\mathcal{F}}_k $ as $ \overline{\mathcal{F}}_k $.
		
		\item Following a special case of \cite[theorem A.6]{Hat02} we see that $ H_\ast^{CW}\left(K,L;\ZZ_2\right) $ where $ K = X/\partial X\times Y $ and $ L=X/\partial X\times Y_0 $ is the homology associated to the chain complex
		\begin{equation*}
			\xymatrix@C=2pc
			{
				\dots \ar[r]^-{\partial_{j+1}}
				& \bigoplus\limits_{k=0}^j\left\langle\overline{\mathcal{B}}_k\right\rangle\otimes\left\langle\overline{\mathcal{F}}_{j-k}\right\rangle \ar[r]^-{\partial_{j}}
				& \dots,
			}
		\end{equation*}
		so the matrices representing the operators $ \partial_j $ with respect to the basis $ \left\{b_\alpha^k\otimes f^{j-k}_\beta\right\}_{\left(\alpha,\beta\right)\in\overline{\mathcal{B}}_k\times\overline{\mathcal{F}}_{j-k}}^{k=0,\dots,j} $ and $ \left\{b_\alpha^k\otimes f^{j-1-k}_\beta\right\}_{\left(\alpha,\beta\right)\in\overline{\mathcal{B}}_k\times\overline{\mathcal{F}}_{j-1-k}}^{k=0,\dots,j-1} $ are given by the block matrix
		\begin{equation*}
			\newcommand{\rr}[1]{\multicolumn{1}{c|}{#1}}
			\partial_j = \left(\begin{array}{c|cccccc}
				& \langle\overline{\mathcal{B}}_0\rangle\otimes\langle\overline{\mathcal{F}}_{j}\rangle & \langle\overline{\mathcal{B}}_1\rangle\otimes\langle\overline{\mathcal{F}}_{j-1}\rangle & \dotsm & \dotsm & \langle\overline{\mathcal{B}}_{j-1}\rangle\otimes\langle\overline{\mathcal{F}}_{1}\rangle & \langle\overline{\mathcal{B}}_j\rangle\otimes\langle\overline{\mathcal{F}}_{0}\rangle \\ \hline
				\langle\overline{\mathcal{B}}_0\rangle\otimes\langle\overline{\mathcal{F}}_{j-1}\rangle & \rr{\Id_{\overline{\mathcal{B}_0}}\otimes\widetilde{\partial}^Y_{j}} & \rr{\overline{\partial}^X_1\otimes\Id_{\overline{\mathcal{F}}_{j-1}}} & 0 & \dotsm & \dotsm & 0 \\ \cline{2-3}
				\langle\overline{\mathcal{B}}_1\rangle\otimes\langle\overline{\mathcal{F}}_{j-2}\rangle &\rr{0} & \rr{\Id_{\overline{\mathcal{B}}_1}\otimes\widetilde{\partial}^Y_{j-1}} & \ddots & 0 & \dotsm & 0 \\ \cline{3-3}
				\vdots & \vdots & 0 & \ddots & \ddots && \vdots \\ \cline{6-6}
				\langle\overline{\mathcal{B}}_{j-2}\rangle\otimes\langle\overline{\mathcal{F}}_{1}\rangle & \vdots & \vdots && \rr{\ddots} &\rr{\overline{\partial}^X_{j-1}\otimes\Id_{\overline{\mathcal{F}}_1}} & 0 \\ \cline{6-7}
				\langle\overline{\mathcal{B}}_{j-1}\rangle\otimes\langle\overline{\mathcal{F}}_{0}\rangle & 0 & 0 & \dotsm & \rr{\dotsm} & \rr{\Id_{\overline{\mathcal{B}}_{j-1}}\otimes\widetilde{\partial}^Y_1} & \rr{\overline{\partial}^X_j\otimes\Id_{\overline{\mathcal{F}}_0}} \\ \cline{6-7}
			\end{array}\right).
		\end{equation*}
	\end{enumerate}
	
	After obtaining these boundary matrices, we claim that the pair of cell complexes $ \left(K,L\right) $ is homotopic to our desired domain $ \left(\BB_{R_b}/\partial\BB_{R_b}\times\BB_{R_f},\BB_{R_b}/\partial\BB_{R_b}\times \partial D^i\right) $ via homotopy equivalences which are close to the identity in the following sense.
	\begin{lemma} \label{lemma:cellcmplxhomot}
		The pairs $ \left(X,\partial X\right) $ and $ \left(Y,Y_0\right) $ satisfy
		\begin{enumerate}
			\item $ \max\left\{\diam\left(e_\alpha\right);e_\alpha\text{ is a cell of }X\times Y\right\} = \frac{\sqrt{2nN}}{m} $,
			
			\item $ \left(X,\partial X\right) $ is homotopic to $ \left(\BB_{R_b},\partial\BB_{R_b}\right) $ via homotopy equivalences
			\begin{equation*}
				h_b:\left(X,\partial X\right) \to \left(\BB_{R_b},\partial\BB_{R_b}\right) \qquad h_b^{-1}:\left(\BB_{R_b},\partial\BB_{R_b}\right)\to\left(X,\partial X\right),
			\end{equation*}
			that satisfy $ \left\|h_b^{\pm1} - \Id\right\|_{C^0} \leq \frac{\sqrt{2n}}{m} $ and descends to a homotopy equivalence on the quotient
			\begin{equation*}
				\widetilde{h_b}:X/\partial X \to \BB_{R_b}/\partial\BB_{R_b} \qquad \widetilde{h_b^{-1}}:\BB_{R_b}/\partial\BB_{R_b}\to X/\partial X,
			\end{equation*}
			
			\item $ \left(Y,Y_0\right) $ is homotopic to $ \left(\BB_{R_f},\partial D^i\right) $ via homotopy equivalences
			\begin{equation*}
				h_f:\left(Y,Y_0\right) \to \left(\BB_{R_f},\partial D^i\right) \qquad h_f^{-1}:\left(\BB_{R_f},\partial D^i\right)\to\left(Y,Y_0\right),
			\end{equation*}
			which satisfies $ \left\|h_f^{\pm1} - \Id\right\|_{C^0} \leq \frac{\sqrt{2n\left(N-1\right)}}{m} $.
		\end{enumerate}
	\end{lemma}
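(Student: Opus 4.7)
The plan is to verify the three claims in order; parts (2) and (3) share a common radial-rescaling construction, while part (1) is immediate from the cubical structure.

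Part (1) is a direct computation. A top-dimensional cell of $X \times Y$ is the product of a $2n$-cube of side $1/m$ coming from $X$ with a $2n(N-1)$-cube of side $1/m$ coming from $Y$, i.e.\ a $2nN$-cube of side $1/m$, whose diameter is $\sqrt{2nN}/m$.

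For Part (2), the key preliminary observation is the sandwich
\[
\BB_{R_b} \subseteq X \subseteq \BB_{R_b + \sqrt{2n}/m}.
\]
The upper inclusion follows since each cell is the convex hull of its vertices, which lie in $\BB_{R_b+\sqrt{2n}/m}$. For the lower inclusion, any $x$ with $|x| \leq R_b$ lies in some lattice cube $C$ of diameter $\sqrt{2n}/m$; every vertex of $C$ is within $\sqrt{2n}/m$ of $x$ and hence in $X^0$, so $C \subseteq X$. Given this sandwich, for each $\theta \in S^{2n-1}$ set $r_X(\theta) = \sup\{r \geq 0 : r\theta \in X\} \in [R_b, R_b + \sqrt{2n}/m]$ and define
\[
h_b(r\theta) = \frac{R_b}{r_X(\theta)}\, r\theta, \qquad h_b^{-1}(r\theta) = \frac{r_X(\theta)}{R_b}\, r\theta,
\]
extended continuously at the origin. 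These are mutually inverse homeomorphisms of pairs $(X, \partial X) \leftrightarrow (\BB_{R_b}, \partial \BB_{R_b})$, and the bound $|r_X(\theta) - R_b| \leq \sqrt{2n}/m$ combined with $|x| \leq R_b + \sqrt{2n}/m$ yields $\|h_b^{\pm 1} - \Id\|_{C^0} \leq \sqrt{2n}/m$. Since $h_b$ maps $\partial X$ onto $\partial \BB_{R_b}$, it descends to a homotopy equivalence $\widetilde{h_b}$ on the quotients.

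Part (3) is handled by the same radial rescaling in $\RR^d$, which yields $h_f, h_f^{-1}$ with $\|h_f^{\pm 1} - \Id\|_{C^0} \leq \sqrt{2n(N-1)}/m$. The additional task is to verify that these give a pair equivalence $(Y, Y_0) \leftrightarrow (\BB_{R_f}, \partial D^i)$. I would first show $|Y_0| = \partial Y \cap \{\xi^+ = 0\}$: by the cubical structure, a cell of $Y$ with all vertices in $\{\xi^+ = 0\}$ lies entirely in that subspace, and every cell of $\partial Y$ meeting $\{\xi^+ = 0\}$ admits a face belonging to $Y_0$ which exactly covers that intersection. Since radial rescaling preserves the linear subspace $\{\xi^+ = 0\}$, it carries $|Y_0|$ onto $\partial \BB_{R_f} \cap \{\xi^+ = 0\} = \partial D^i$, and likewise $h_f^{-1}(\partial D^i) = |Y_0|$.

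The main obstacle I anticipate is the well-definedness of $r_X$ and $r_Y$ as continuous positive functions, which requires $X$ and $Y$ to be star-shaped with respect to the origin. This is not automatic from the sandwich inclusion alone, since a point in an outer boundary cube of $X$ could in principle only be reached by rays that temporarily exit $X$ inside the thin annulus $R_b < |x| \leq R_b + \sqrt{2n}/m$. For mesh parameter $m$ large enough compared to $1/R_b$ and $1/R_f$, this becomes a short combinatorial PL exercise: any cube whose center has norm at most $R_b - \sqrt{2n}/m$ is automatically in $X$, giving a star-shaped inner core, and star-shapedness on the thin outer annulus then follows from convexity of the underlying smooth ball. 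Should this direct argument prove fiddly, one can bypass star-shapedness entirely by replacing the radial rescaling with a PL collar-neighborhood construction, producing a pair-homotopy equivalence controlled purely by the Hausdorff distance between $X$ (respectively $Y$) and the target ball.
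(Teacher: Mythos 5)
Your approach is essentially the same as the paper's: verify part (1) by counting cube dimensions, establish the sandwich $\BB_{R_b}\subseteq X\subseteq\BB_{R_b+\sqrt{2n}/m}$, and build radial maps close to the identity. The paper uses two separate radial deformation retractions $F_t$ (identity on $\partial X$, pushing outward) and $G_t$ (identity on $\BB_{R_b}$, pushing inward), taking $h_b=G_1$ and $h_b^{-1}=F_1$ as a homotopy-inverse pair; you instead build a single radial rescaling $h_b(r\theta)=\frac{R_b}{r_X(\theta)}r\theta$ and its genuine inverse, which is cleaner and strictly stronger (a homeomorphism rather than a homotopy equivalence). Your part (3) argument that $|Y_0|=\partial Y\cap\{\xi^+=0\}$ and that radial rescaling carries it onto $\partial D^i$ is also correct and makes explicit what the paper only asserts. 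The star-shapedness concern you raise is real and, notably, the paper's own proof has the same unexamined dependence (both $F_t$ and $G_t$ are described as "radial functions acting monotonically on each ray," which only makes sense if each ray meets $X$ in an interval); you flag it honestly and sketch two plausible resolutions, whereas the paper does not address it at all, so relative to the paper your account is at least as careful. If you want to close the gap cleanly, the collar/Hausdorff-distance argument you mention in the last sentence is probably the more robust route since it sidesteps the combinatorics of the cubical boundary; either way the $\|h^{\pm1}-\Id\|_{C^0}\leq\sqrt{2n}/m$ estimate you derive from $R_b\leq r_X(\theta)\leq R_b+\sqrt{2n}/m$ is the right bound and is what the paper needs downstream.
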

	
\begin{proof} \
		\begin{enumerate}
			\item Note that in our notations
			\begin{equation*}
				\max\left\{\diam\left(e_\alpha\right);e_\alpha\text{ is a cell of }X\times Y\right\} = \max_{\left(\alpha,\beta\right)\in\mathcal{B}_{2n}\times\mathcal{F}_{2n\left(N-1\right)}}\left\{\diam\left(b_\alpha^{2n}\times 	f_\beta^{2n\left(N-1\right)}\right)\right\},
			\end{equation*}
			so all top dimensional cells in $ X\times Y $ are simply $ 2nN $-dimensional cubes with edges of length $ \frac{1}{m} $, which means that the maximal diameter of such cells is exactly $ \frac{\sqrt{2nN}}{m} $.
			
			\item Since $ X $ is defined as the full cubical complex induced by the lattice structure of $ \ZZ^{2n} $ with all vertices contained in $ \frac{1}{m}\ZZ^{2n}\cap \BB_{R_b+\frac{\sqrt{2n}}{m}} $, it clearly holds that $ \BB_{R_b} \subset X \subset \BB_{R_b+\frac{\sqrt{2n}}{m}} $. We define two homotopies on $ X $:
			\begin{itemize}
				\item $ F_t:\left[0,1\right]\times X \to X $ such that $ F_0 = \Id $ and $ F_1\left(\left(X\setminus\BB_{R_b}\right)\cup\partial\BB_{R_b}\right) = \partial X $ is a radial function that acts increasingly (in variable $ t $) on each ray and such that $ F_t\vert_{\partial X} = \Id $ for all $ t\in\left[0,1\right] $.
				
				\item $ G_t:\left[0,1\right]\times X \to X $ such that $ G_0 = \Id $ and $ G_1\left(\left(X\setminus\BB_{R_b}\right)\cup\partial\BB_{R_b}\right) = \partial\BB_{R_b} $ is a radial function that acts decreasingly (in variable $ t $) on each ray and such that $ G_t\vert_{\BB_{R_b}} = \Id $ for all $ t\in\left[0,1\right] $.
			\end{itemize}
			Note that since the homotopies $ F_t,G_t $ are radial functions which act monotonically (in $ t $) on each ray then $ F_1\left(\partial\BB_{R_b}\right) = \partial X \subset \BB_{R_b+\frac{\sqrt{2n}}{m}} $ and $ G_1\left(\partial X\right) = \partial\BB_{R_b} $ so we get that the desired homotopy equivalences
			\begin{equation*}
				h_b = G_1:\left(X,\partial X\right)\to\left(\BB_{R_b},\partial\BB_{R_b}\right) \qquad h_b^{-1} = 	F_1:\left(\BB_{R_b},\partial\BB_{R_b}\right)\to\left(X,\partial X\right),
			\end{equation*}
			that indeed satisfy $ \left\|h_b^{\pm1} - \Id\right\|_{C^0} \leq \frac{\sqrt{2n}}{m} $.
			
			Finally, in order to show these maps descends to homotopy equivalences on the quotients we need the following claim, the proof of which is omitted for brevity but essentially only uses elementary topological arguments.
			\begin{claim} \label{claim:quotienthomotopy}
				Let $ X $ be a topological space and let $ A_0\subset A\subset X $ be subspaces of $ X $. If $ X_0 \subset \left(X\setminus A\right)\cup A_0 $ is a subspace such that
				\begin{align}
					&\left(X,\left(X\setminus A\right)\cup A_0\right) \sim_{\text{Rel }X_0}\left(X,X_0\right)\text{ through maps which sends }\left(X\setminus A\right)\cup A_0\text{ into itself}, \label{eq:quotienthomotopycond1} \\
					&\left(X,\left(X\setminus A\right)\cup A_0\right)\sim_{\text{Rel }A}\left(A,A_0\right)\text{ through maps which sends }\left(X\setminus A\right)\cup A_0\text{ into itself}, \label{eq:quotienthomotopycond2}
				\end{align}
				then $ X/X_0 $ is homotopy equivalent to $ A/A_0 $.
			\end{claim}
			Thus, the homotopies $ h_b^{\pm1} $ show that $ \left(X,\left(X\setminus\BB_{R_b}\right)\cup\partial \BB_{R_b}\right) \sim_{\text{Rel }X_0} \left(X,\partial X\right) $ through maps that send $ \left(X\setminus\BB_{R_b}\right)\cup\partial \BB_{R_b} $ into itself and that $ \left(X,\left(X\setminus\BB_{R_b}\right)\cup\partial \BB_{R_b}\right) \sim_{\text{Rel }A} \left(\BB_{R_b},\partial\BB_{R_b}\right) $ through maps that send $ \left(X\setminus\BB_{R_b}\right)\cup\partial \BB_{R_b} $ into itself, so $ \partial\BB_{R_b} \subset \BB_{R_b}\subset X $ and $ \partial X\subset\left(X\setminus\BB_{R_b}\right)\cup\partial\BB_{R_b} $ satisfy the assumptions \Cref{eq:quotienthomotopycond1},\Cref{eq:quotienthomotopycond2} in the previous claim therefore they descend to the quotient and we get homotopy equivalences
			\begin{equation*}
				\widetilde{h_b}:X/\partial X \to \BB_{R_b}/\partial\BB_{R_b} \qquad \widetilde{h_b^{-1}}:\BB_{R_b}/\partial\BB_{R_b}\to X/\partial X,
			\end{equation*}
			ad claimed.
			
			\item Since the complex $ Y $ is defined same as $ X $ is also clearly holds that $ \BB_{R_f} \subset Y \subset \BB_{R_f+\frac{\sqrt{2n\left(N-1\right)}}{m}} $, and the same construction gives homotopy equivalences
			\begin{equation*}
				h_f:\left(Y,\partial Y\right)\to\left(\BB_{R_f},\partial\BB_{R_f}\right) \qquad 	h_f^{-1}=\left(\BB_{R_f},\partial\BB_{R_f}\right)\to\left(Y,\partial Y\right),
			\end{equation*}
			that satisfy $ \left\|h_f^{\pm1} - \Id\right\|_{C^0} \leq \frac{\sqrt{2n\left(N-1\right)}}{m} $, so by definition of the sub complex $ Y_0 $ and the subset $ \partial D^i $ we get that $ h_f\left(Y_0\right) = \partial D^i $ and $ h_f^{-1}\left(Y_0\right) = \partial D^i $, which proves the Lemma.
		\end{enumerate}
	\end{proof}
	
	So, we conclude that the following maps are homotopy equivalences of our domain and $ \left(K,L\right) $
	\begin{equation*}
		\begin{split}
			h&=\widetilde{h_b}\oplus h_f :\left(K,L\right)\to\left(\BB_{R_b}/\partial\BB_{R_b}\times\BB_{R_f},\BB_{R_b}/\partial\BB_{R_b}\times \partial D^i\right)\\
			h^{-1}&=\widetilde{h_b^{-1}}\oplus h_f^{-1} :\left(\BB_{R_b}/\partial\BB_{R_b}\times\BB_{R_f},\BB_{R_b}/\partial\BB_{R_b}\times \partial D^i\right)\to \left(K,L\right).
		\end{split}
	\end{equation*}

	\subsection{Step 2 - Filtering the complex} \label{sec:step2}
	Once we have a cell complex which approximates our domain, we wish to sample the generating function on this complex to get a barcode related to sub-level sets - we claim that this barcode is close in bottleneck distance to the barcode defined in the smooth setting. To make this more precise we start by introducing the notions of \textit{filtered chain complex} and the \textit{persistent homology} it induces, following \cite[section 6.2]{Pol19} we define everything on the chain level and so we may later use the relative chain complex as in our case.
	
	\paragraph[filtered chain cmplxs]{}
	\begin{definition} \label{def:filteredcomplex}
		A \textit{filtered chain complex} is a chain complex $ \left(C_\ast,\partial\right) $ along with a sequence of increasing sub-complexes called a \textit{filtration}
		\begin{equation*}
			\emptyset = C_\ast^0 \subset C_\ast^1 \subset \dots C_\ast^N = C,
		\end{equation*}
		such that the differential preserves the filtration, i.e $ \partial\left(C_\ast^j\right) \subset C_{\ast-1}^j $ for all $ 0\leq j\leq N $.	
	\end{definition}
	Let $ \left(C_\ast,\partial\right) $ be a bounded chain complex of finite type with maximal dimension $ n $ and for each $ 0\leq k\leq n $, let $ \mathcal{E}_k $ be a chosen (finite and unordered) basis for $ C_k $. Denote $ C = \oplus_{k=0}^n C_k $ and $ \mathcal{E} = \cup_{k=1}^n \mathcal{E}_k $, if $ u:\mathcal{E}\to\RR $ is a function defined on the basis of $ C $ then $ u $ may be extended to all chains in $ C $ by
	\begin{equation} \label{eq:extensionformbasis}
		u\left(\sum_{e\in \mathcal{E}}a_e e\right) = \max_{a_e\neq 0}u\left(e\right).
	\end{equation}
	The function $ u:C\to\RR $ is called \textit{monotonic} if it is decreasing with respect to the operation of $ \partial $, i.e $ u\left(\partial c\right)\leq u\left(c\right) $ for all $ c\in C $, the nature of the function's extension from $ \mathcal{E} $ to $ C $ implies that its enough to check monotonicity on basis elements, i.e $ u\left(\partial e\right) \leq u\left(e\right) $ for all $ e\in \mathcal{E} $. For $ t\in\RR $ we set
	\begin{equation*}
		C_k^t = \left\{c^k\in C_k;u\left(c^k\right)<t\right\}, \text{ generated by: }  \mathcal{E}_k^t = \left\{e^k\in \mathcal{E}_k;u\left(e^k\right) < t\right\},
	\end{equation*}
	and denote $ C^t = \oplus_{k=0}^n C_k^t, \mathcal{E}^t = \cup_{k=0}^n\mathcal{E}_k^t $ as before. Since $ \mathcal{E} $ is a finite set, the function $ u $ only takes values in $ \left\{\min u = t_0 \leq t_2 \leq \dots \leq t_N = \max u\right\} $ where $ N = \left|\mathcal{E}\right| $ so we have a sequence of chain complexes
	\begin{equation*}
		\emptyset = C_\ast^{t_0} \subset C_\ast^{t_2} \subset \dots \subset C_\ast^{t_N} \subset C_\ast,
	\end{equation*}
	such that $ \partial\left(C_\ast^{t_j}\right) \subset C_{\ast-1}^{t_j} $ for all $ j $. We conclude that a function $ u:\mathcal{E}\to\RR $ gives rise to a filtered chain complex this way and we say this filtration is induces by the function $ u $ and a preferred basis $ \mathcal{E} $.
	
	Assume form now on that all homology groups are taken with $ \ZZ_2 $ coefficients, then this filtration defines a p.m $ V\left(C_\ast^t\right) $ by considering the homology of sub-complexes $ \bigoplus_{k=0}^n H_k\left(C_\ast^t,\partial\right) $ with the morphisms $ \pi_{s,t} $ induced by inclusions $ C_\ast^s\subset C_\ast^t $, since the function only takes on a finite number of values this p.m is given by a finite sequence (in each degree)
	\begin{equation*}
		\xymatrix@C=2pc
		{
			0 = H_{k}\left(C_\ast^{0},\partial\right) \ar[r]
			& H_{k}\left(C_\ast^{1},\partial\right) \ar[r]
			& \dots \ar[r]
			& H_{k}\left(C_\ast^{N},\partial\right) \ar[r]
			& H_{k}\left(C_\ast,\partial\right),
		}
	\end{equation*}
	where we denote $ C_\ast^{t_j} = C_\ast^j $ and $ \pi_{i,j} = \pi_{t_i,t_j} $ for brevity. Let $ \alpha \in H_{k}\left(C_\ast^i,\partial\right) $, we say that $ \alpha $ is \textit{born} at level $ i $ if $ \alpha\notin\Ima\pi_{i-1,i} $ and we say $ \alpha $ \textit{dies entering} level $ j $ if it merges with an "older" class when moving from $ H_k\left(C_\ast^{j-1,\partial}\right) $ to $ H_k\left(C_\ast^j,\partial\right) $, i.e if $ \pi_{i,j-1}\left(\alpha\right)\notin\Ima\pi_{i-1,j-1} $ but $ \pi_{i,j}\left(\alpha\right) \in\Ima\pi_{i-1,j} $.
	\begin{remark}
		In some computational topology texts (such as \cite{EdHar10}) the previous construction takes a slightly different form, the basis $ \mathcal{E} $ is often considered as a simplicial complex and so the function $ u $ defines a sequence of increasing simplicial complexes which is referred to as a \textit{filtered complex}. This filtered complex corresponds to a sequence of homology groups connected by morphisms (induced by inclusions) and thus defines \textit{persistent homology groups} as the images of these morphisms between each two levels of the filtration, this notion corresponds to our persistence modules. Finally, a \textit{persistent diagram} is used to visualize the data and read information about the persistent homology, this corresponds to our barcode.
	\end{remark}
	
	\paragraph[filtering the cmplx]{}
	Going back to our setting, we use the input samples $ S_j\vert_{\frac{1}{m}\ZZ^{2n}} $ along with the expression obtained for the GFQI $ S $ in \Cref{eq:defGFQI} to get a sample of $ S $ on a compact set (therefore the sample is finite), i.e we have
	\begin{equation*}
		S\vert_{\frac{1}{m}\ZZ^{2nN}\cap\left(\BB_{R_b+\frac{\sqrt{2n}}{m}}\times\BB_{R_f+\frac{\sqrt{2n\left(N-1\right)}}{m}}\right)}
	\end{equation*}
	which means we have a sample of $ S $ on the set of vertices
	\begin{equation*}
		\left\{b_\alpha^0\times f_\beta^0\right\}_{\left(\alpha,\beta\right)\in\mathcal{B}_0\times\mathcal{F}_0} = X^0\times Y^0 	\subset \RR^{2nN},
	\end{equation*}
	which descends to a sample of $ S $ on $ K^0 = \left(X^0/\left(\partial X\right)^0\right) \times Y^0 $ since $ S = \mathcal{Q} $ outside $ \BB_{R_b} $ and we get a sample of $ S\vert_{K^0} $ that induces a filtration on the chain complex
	\begin{equation}\label{eq:relchaincmplx}
		\xymatrix@C=2pc
		{
			\dots \ar[r]^-{\partial_{j+1}}
			& \bigoplus\limits_{k=0}^j\left\langle\overline{\mathcal{B}}_k\right\rangle\otimes\left\langle\overline{\mathcal{F}}_{j-k}\right\rangle \ar[r]^-{\partial_{j}}
			& \dots,
		}
	\end{equation}
	the homology of which is $ H_{\ast}^{CW}\left(K,L;\ZZ_2\right) $, by extending this sample to a function on the set of basis elements
	\begin{equation*}
		\widetilde{S}:\bigcup_{j=0}^{2nN}\left\{b_\alpha^k\otimes f^{j-k}_\beta\right\}_{\left(\alpha,\beta\right)\in\overline{\mathcal{B}}_k\times\overline{\mathcal{F}}_{j-k}}^{k=0,\dots,j} \to \RR,
	\end{equation*}
	defined recursively as
	\begin{equation*}
		\widetilde{S}\left(b_\alpha^k\otimes f^{j-k}_\beta\right) = \max\left\{\widetilde{S}\left(b_{\alpha^\prime}^{k^\prime}\otimes f^{j-1-{k^\prime}}_{\beta^\prime}\right);b_{\alpha^\prime}^{k^\prime}\otimes 	f^{j-1-{k^\prime}}_{\beta^\prime}\in\partial_j\left(b_\alpha^k\otimes f^{j-k}_\beta\right)\right\},
	\end{equation*}
	so we get a function $ \widetilde{S} $ defined on the set of cells in $ K\setminus L $ that is decreasing along boundaries of cells.
	
	This collection of cells serve as a basis for the vector spaces in the chain complex associated to the relative homology $ H_\ast^{CW}\left(K,L;\ZZ_2\right) $ given by \Cref{eq:relchaincmplx}, and since $ \widetilde{S} $ decreases along boundaries, the extension of $ \widetilde{S} $ to the entire vector space (defined by \Cref{eq:extensionformbasis}) produces a monotonic function and we get a filtered chain complex induced by $ \widetilde{S} $ with a preferred basis given by the cells of $ K\setminus L $ and we say this filtration is induced by $ S\vert_{K^0} $. We can now define a p.m as follows.
	
	\begin{definition}
		Let $ \varphi\in\Ham_c\left(\RRnn,\omega_0\right) $ and let $ S:\RRnn\times\RR^d\to\RR $ be a GFQI of $ \varphi $ with quadratic radii $ R_b,R_f $ and quadratic index $ i $. Assume that the cell complexes $ K=X/\partial X \times Y $ and $ L = X/\partial X\times Y_0 $ satisfy the assumptions in \Cref{lemma:cellcmplxhomot}, then the approximated persistence module $ \widetilde{V}\left(\varphi,S,K,L\right) $ is the p.m associated to the filtered chain complex \Cref{eq:relchaincmplx} with grading shifted by $ i $, preferred basis given by all cells in $ K\setminus L $ and the filtration induced by $ S\vert_{K^0} $.
	\end{definition}
	We conclude that $ \widetilde{V}\left(\varphi,S,K,L\right) $ is then the p.m associated to the chain complex whose boundary matrices are $ \left\{\partial_j\right\}_{j=1}^{2nN} $, filtered by $ \widetilde{S} $ and with grading shifted by $ N-1 $. We permute the rows and columns of all matrices $ \partial_j $ according to the ordering of basis elements given by the values of $ \widetilde{S} $ and re-denote the new matrices as $ \partial_j $.

	\subsection{Step 3 - Extracting the barcode} \label{sec:step3}
	Finally, after permuting the rows and columns of all boundary matrices according to the filtration induced by our generating function, we wish the extract the barcode associated to the p.m $ \widetilde{V}\left(\varphi,S,K,L\right) $ from the data encoded in these matrices and claim it is close in bottleneck distance to our desired barcode, for this we first describe a process called matrix reduction which enables one to read the barcode of a filtered chain complex from its boundary matrices.
	
	\paragraph[Boundary matrix reduction]{}
	Given a filtered chain complex $ C_\ast^t $ with preferred basis $ \mathcal{E} $, the p.m $ V\left(C_\ast^t\right) $ has a barcode denoted $ \mathcal{B}\left(C_\ast^t\right) $ and we show how this barcode can be explicitly recovered from the matrix of the boundary operator $ \partial $ with respect to the basis $ \mathcal{E} $ with an appropriate ordering. This can be done by turning the matrix to \textit{reduced} form.
	\begin{definition} \label{def:reducedmatrix}
		Let $ D $ be a matrix with $ \ZZ_2 $ coefficients. For every non zero column $ j $, the \textit{pivot index} denoted $ \Pivot\left(j\right) $ is the maximal index $ i $ such that $ \partial_{ij} = 1 $ and $ D $ is called \textit{reduced} if no two columns has the same pivot index.
	\end{definition}
	We shall next give a way of obtaining reduced form of $ \partial $ by a product $ R=\partial\cdot T $ with a triangular matrix, note that zero columns correspond to cycles (since their boundary is empty) and non-zero columns represent boundaries (since they contain the result of $ \partial $ operating on the $ j $-th column of $ T $). We explain here the procedure of defining the \textit{compatible ordering} on $ \mathcal{E} $ and reducing the matrix of $ \partial $ in order to read the desired barcode.
	\begin{enumerate}
		\item We start by ordering the basis elements $ e^k\in \mathcal{E}_k $ for each degree $ k $ in ascending order according to their value $ u\left(e^k\right) $ (in case of equality the order is arbitrary). So we get sequences $ \left\{e_1^k,\dots,e_{N_k}^k\right\} = \mathcal{E}_k $ such that
		\begin{equation*}
			u\left(e_i^k\right) < u\left(e_j^k\right) \implies i<j,
		\end{equation*}
		then we put all basis elements in one sequence ordered by degree and then by the internal order, we get the sequence
		\begin{equation*}
			\left\{e_1,\dots,e_N\right\} = \left\{e_1^0,\dots,e_{N_1}^0;e_1^1,\dots,e_{N_1}^1;\dots;e_1^k,\dots,e_{N_k}^k;\dots;e_1^n\dots,e_{N_n}^n\right\}=\mathcal{E}.
		\end{equation*}
		
		\item Denote by $ \left(\partial_{ij}\right)_{i,j=1\dots,N} $ the matrix with $ \ZZ_2 $ coefficients associated to the operator $ \partial:C\to C $ with respect to this ordered basis (i.e $ \partial\left(e_j\right) = \sum_{i=1}^{N} \partial_{ij}e_i $). The boundary of every basis element is recorded in its column and note that since $ \partial\left(C_k\right) \subset C_{k-1} $, $ \partial_{ij} $ is in fact a block matrix where each block $ \left(\partial^k_{i,j}\right)^{k=1,\dots,n}_{i=1,\dots,N_{k-1},j=1,\dots,N_k} $ is upper triangular and corresponds to $ \partial\vert_{C_k} $.
		
		\item We reduce the matrix $ \partial $ by adding columns from left to right - at the $ j $-th column, we check if there exists a column $ j_0 < j $ such that $ \Pivot\left(j_0\right) = \Pivot\left(j\right) $, in which case we add column $ j_0 $ to column $ j $. Note that this pivot indices can only be the same for columns of the same degree since they represent a boundary element, furthermore, because this matrix is over $ \ZZ_2 $, addition decreases the value of $ \Pivot\left(j\right) $. We continue checking for columns $ j_0 < j $ with $ \Pivot\left(j_0\right) = \Pivot\left(j\right) $ until the first $ j $ columns form a reduced sub-matrix.
		
		Note that since we only perform column operations by degree and from left to right, the matrix $ T_{ij} $ which encodes these operations is a block matrix $ T_{ij}^k $ with upper triangular blocks that has non zero diagonal, and so we get that this reduced form is given by $ R_{ij} = \partial_{ij} \cdot T_{ij} $. $ R_{ij} $ is also a block matrix, we denote its blocks by $ R_{ij}^k = \partial_{ij}^k\cdot T_{ij}^k $ for $ k=1,\dots,n $. Furthermore, for non-zero columns in $ R_{ij}^k $ we denote by $ i=\Pivot_k\left(j\right) $ the maximal index $ i $ with $ R_{ij}^k = 1 $.
		
		\item After $ \partial_{ij} $ is reduced this way, its columns represent the boundaries of chains $ \left\{f^{k+1}_1,\dots,f^{k+1}_{N_{k+1}}\right\} $ where $ f^{k+1}_j = \sum_{i< j}T_{ij}^{k+1} e_i^{k+1} + e_j^{k+1} $ and we look at all pairs of chains $ \left(e_i^{k},f_j^{k+1}\right) $ such that $ i = \Pivot_{k+1}\left(j\right) $. Note that in this case column $ i $ in $ R_{ij}^k $ must be zero and we get $ u\left(f_j^{k+1}\right) \geq u\left(e_i^{k}\right) $ since $ e_i^{k} $ belongs to the boundary of $ f_j^{k+1} $, we call such a pair \textit{essential} if $ u\left(e_i^{k}\right) < u\left(f_j^{k+1}\right) $.
		
		\item For each degree $ k<n $, we denote the interval $ I_i^{k} = \left(u\left(e_i^{k}\right),u\left(f_j^{k+1}\right)\right] $ whenever the pair $ \left(e_i^{k},f_j^{k+1}\right) $ is essential and for all  degrees $ k $ we denote the rays $ J_i^{k} = \left(u\left(e_i^{k}\right),\infty\right) $ whenever column $ i $ of $ R_{ij}^k $ is zero and $ i \neq \Pivot_{k+1}\left(j\right) $ for any $ j $.
	\end{enumerate}
	\begin{lemma} \label{lemma:barcodepairing}
		The barcode containing all intervals $ I_i^k $ and $ J_i^k $ ($ 0\leq k \leq n$) is equal to $ \mathcal{B}\left(C_\ast^t\right) $.
	\end{lemma}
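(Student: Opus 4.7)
The plan is to interpret the reduction procedure as recording exactly the births and deaths of homology classes in the filtered chain complex, and then to invoke the uniqueness of the persistence barcode via the normal form theorem (\Cref{thm:normalform}). I would first argue that although the reduced matrix $R$ depends on the specific column operations used, the set of pivot indices $\{\Pivot_k(j)\}$ in each degree, and hence the pairing $(e_i^k, f_j^{k+1})$ with $i = \Pivot_{k+1}(j)$, is an invariant of the filtered chain complex. This is the so-called \emph{pairing uniqueness lemma}: given the compatible ordering on $\mathcal{E}$, the lowest index of a nonzero entry in a reduced column is determined by the ranks of submatrices $\partial^{\leq p, \leq q}$ formed by the first $p$ rows and first $q$ columns, a quantity unchanged by the left-to-right column additions used in the reduction. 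I would prove this by noting that each column operation only adds an earlier column to a later one, so the rank of every top-left submatrix is preserved, and the pivot index of column $j$ is determined by the jump in these ranks.

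Next I would translate the algorithmic data into persistence-theoretic content. A zero column $i$ in $R^k$ means that $e_i^k$ (after the column operations) has become a cycle in $C_k^{u(e_i^k)+\epsilon}$; combined with the compatible ordering, such a cycle represents a new homology class born entering level $u(e_i^k)$ in degree $k$, because no earlier combination of basis elements of value less than $u(e_i^k)$ cancels it. Conversely, if $i = \Pivot_{k+1}(j)$, then the chain $f_j^{k+1} = e_j^{k+1} + \sum_{i'<j} T^{k+1}_{i'j} e_{i'}^{k+1}$ has boundary whose leading term (in the filtration order on degree-$k$ basis elements) is $e_i^k$. This means that upon passing the threshold $u(f_j^{k+1})$, the class born at $e_i^k$ becomes a boundary and dies. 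When $u(e_i^k) = u(f_j^{k+1})$ (the non-essential case), the corresponding birth and death happen at the same filtration level, so no interval is contributed and this is consistent with declaring only essential pairs to yield bars.

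Putting these together, each essential pair $(e_i^k, f_j^{k+1})$ contributes an interval $I_i^k = (u(e_i^k), u(f_j^{k+1})]$ in the degree-$k$ barcode, and each unpaired zero column in degree $k$ (a cycle that never becomes a boundary) contributes a ray $J_i^k = (u(e_i^k), \infty)$. The total dimension of $H_k(C_\ast^t, \partial)$ equals the number of cycles born at or before $t$ in degree $k$ minus the number that have already become boundaries by time $t$, which is precisely the number of intervals $I_i^k$ and $J_i^k$ containing $t$. Since the persistence module of a filtered chain complex is determined up to isomorphism by its dimension function together with the connecting maps induced by inclusions, and both sides agree on this data, the collection of intervals $\{I_i^k, J_i^k\}$ must equal $\mathcal{B}(C_\ast^t)$ by \Cref{thm:normalform}.

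The main obstacle I anticipate is the pairing uniqueness step: one must carefully verify that the left-to-right, degree-preserving column additions do not alter the pivot structure. I would handle this by tracking, for each pair $(p,q)$, the rank of the submatrix formed by rows with index $\leq p$ and columns with index $\leq q$, showing these ranks are invariant under admissible reductions, and then expressing $\Pivot_k(j)$ purely in terms of these rank jumps. Once uniqueness is established the identification with the barcode is essentially bookkeeping, so this rank-invariance argument is where the real content of the lemma lies.
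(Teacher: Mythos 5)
Your proposal gets the intuition right and follows the same general storyline as the paper (reading births from zero columns, deaths from pivots, assembling intervals), but there are two issues worth flagging, one organizational and one a genuine gap.

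\textbf{Unneeded detour.} You spend a good deal of effort proving pairing uniqueness via rank-invariance of top-left submatrices. In the paper this is a \emph{separate} statement (the Pairing Lemma, cited to Edelsbrunner--Harer), and it is not needed to prove \Cref{lemma:barcodepairing}: the lemma asserts that \emph{any} reduced matrix obtained by left-to-right column operations yields the correct barcode, so you do not need to know that different reductions give the same pivots. The pivot-uniqueness fact is useful elsewhere, but it is tangential to this particular lemma.

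\textbf{The gap.} Your concluding step is that the total dimension of $H_k(C_\ast^t,\partial)$ equals the number of intervals among $\{I_i^k, J_i^k\}$ containing $t$, and that therefore, ``since the persistence module is determined by its dimension function together with the connecting maps,'' the two barcodes coincide. The parenthetical hedge about connecting maps is exactly the point you never verify, and dimension functions alone do \emph{not} determine a barcode: the persistence modules $\FF(0,2] \oplus \FF(1,3]$ and $\FF(0,3] \oplus \FF(1,2]$ have the same dimension function at every $t$ but are non-isomorphic. To close the argument you must either (a) show that the ranks of every $\pi_{s,t}$, not just the dimensions of $V_t$, agree on both sides, or (b) produce an explicit isomorphism of persistence modules. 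The paper takes route (b): it fixes, once and for all, explicit cycle representatives $c_i^k$ (namely $\partial f_j^{k+1}$ when $i = \Pivot_{k+1}(j)$, and $f_i^k$ when column $i$ is zero and unpaired), shows $u(c_i^k) = u(e_i^k)$, and then proves that for each $t$ the subset of these \emph{fixed} cycles whose interval contains $t$ is a basis of $H_k(C_\ast^t)$. Because the same chain $c_i^k$ represents a generator across all $t$ in its interval, the inclusion-induced maps $\pi_{s,t}$ are forced to send generator to generator, which is precisely the interval-module decomposition. Your argument stops at counting; the paper's argument tracks the generators themselves through the filtration, and that extra step is where the lemma is actually won. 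Your rank-of-submatrices idea could in principle be developed into route (a) (the ranks of $\partial^{\leq p,\leq q}$ do encode the persistence numbers $\rk\,\pi_{s,t}$), but you would need to make that translation explicit rather than invoking dimension equality and the normal form theorem.
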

	\begin{proof}
		For each degree $ k $, the barcode $ \mathcal{B}\left(C_\ast^t\right) $ consists of intervals associated to linearly independent generators of the homology groups $ H_k\left(C_\ast^t,\partial\right) = \ker\left(\partial_k\vert_{C_k^t}\right)/\Ima\left(\partial_{k+1}\vert_{C_{k+1}^t}\right) $, we shall show that for every $ t\in\RR $ these generators correspond to chains derived from the intervals $ I_i^k $ and $ J_i^k $ which contains $ t $.
		
		First, note that for $ k>0 $ we have the change of basis from $ e^{k}_1,\dots,e^{k}_{N_{k}} $ to $ f^{k}_1,\dots,f^{k}_{N_{k}} $ which is done by a triangular matrix with non zero diagonal so the vectors $ f_j^{k} $ are all linearly independents. Moreover, the extension of $ u $ from $ \mathcal{E} $ to $ C $ given by \Cref{eq:extensionformbasis} implies that
		\begin{equation*}
			u\left(f_j\right) = u\left(\sum_{i\leq j}T_{ij}e_i\right) = \max_{T_{ij}=1}u\left(e_i\right),
		\end{equation*}
		and since our ordering of $ e_1,\dots,e_N $ assures us that $ u\left(e_i\right)\geq u\left(e_{i-1}\right) $ as long as both has the same degree, we see that in fact $ u\left(f_j^{k}\right) = u\left(e_j^{k}\right) $ and conclude the space $ C_k^t $ is generated by the chains $ f_i^k $ with $ u\left(f_i^k\right) < t $ and also by the chains $ e_i^k $ with $ u\left(e_i\right)<t $. We consider the following chains in $ C_k $ when $ 0<k<n $.
		\begin{equation*}
			c^k_i = \begin{cases}
				\partial f^{k+1}_j & i=\Pivot_{k+1}\left(j\right) \\
				f^k_i & i\neq\Pivot_{k+1}\left(j\right)\text{ for all } j\text{ and }\partial f_i^k = 0,
			\end{cases}
		\end{equation*}
		and when $ k=0,n $ we set
		\begin{equation*}
			c^0_i = \begin{cases}
				\partial f^{1}_j & i=\Pivot_{1}\left(j\right) \\
				e^0_i & i\neq\Pivot_{1}\left(j\right)\text{ for all } j
			\end{cases} \qquad c^n_i = \begin{cases}
			f^n_i & \partial f^n_i = 0
		\end{cases}.
		\end{equation*}
		So, all chains $ c_i^k $ are linearly independent cycles and in all cases we have $ u\left(c^k_i\right) = u\left(e^k_i\right) $. Let $ t\in\RR $, we claim that the set
		\begin{equation*}
			\left\{c_i^k;\exists i\text{ such that }t\in I_i^k\text{ or }t\in J_i^k\right\},
		\end{equation*}
		is a basis of $ H_k\left(C_\ast^t,\partial\right) $. Then, the interval modules generated by this basis correspond to parts of the direct sum in the p.m $ H_k\left(C_\ast^t,\partial\right) $ so the degree $ k $ part of the barcode $ \mathcal{B}\left(C_\ast^t\right) $ is exactly given by the intervals $ I_i^k,J_i^k $.
		
		We prove the claim: since $ c_i^k $ are all cycles, we just need to show that $ c_i^k\in C_k^t $ and $ c_i^k $ is not a boundary if and only if $ t\in I_i^k $ or $ t\in J_i^k $. We split into two cases:
		\begin{enumerate}
			\item If $ i=\Pivot_{k+1}\left(j\right) $, then $ c_i^k = \partial f_j^{k+1} $. So, the addition of $ e_i^k $ marks the birth of this cycle at level $ u\left(e_i^k\right) = u\left(c_i^k\right) $ and the addition of $ e^{k+1}_j $ marks the death of this cycle (it becomes a boundary) when entering level $ u\left(e^{k+1}_j\right) = u\left(f^{k+1}_j\right) $ - more precisely, we have
			\begin{itemize}
				\item $ u\left(e_i^k\right) < t $ if and only if $ c_i^k = \partial f_j^{k+1} $ is a cycle in $ C_k^t $,
				\item $ u\left(f_j^{k+1}\right) < t $ if and only if $ c_i^k = \partial f_j^{k+1} $ is a boundary in $ C_k^t $.
			\end{itemize}
			The former holds since $ u\left(c_i^k\right) = u\left(e_i^k\right) < t $ implies that $ c_i^k\in C_k^t $ (it is always a cycle) and $ u\left(c_i^k\right) = u\left(e_i^k\right) \geq t $ implies $ c_i^k\notin C_k^t $. The latter holds since $ u\left(f_j^{k+1}\right) < t $ implies $ f_j^{k+1}\in C_{k+1}^t $ so $ \partial f_j^{k+1}\in C_k^t $ is a boundary and if $ u\left(f_j^{k+1}\right) \geq t $ then $ f_j^{k+1}\notin C_{k+1}^t $ and so the cycle $ c_i^k = \partial f_j^{k+1} $ cannot be a boundary of any "younger" cycle in $ C_{k+1}^t $, indeed, using the fact that $ f^{k+1}_p $ forms a basis of $ C_{k+1} $ and by our ordering we have $ f_p^{k+1}\notin C_{k+1}^t $ for all $ p\geq j $, we assume
			\begin{equation*}
				c_i^k = \partial\left(\sum_{p<j}a_pf_p^{k+1}\right) =  \sum_{p<j}a_p\partial f_p^{k+1},
			\end{equation*}
			then since $ i=\Pivot_{k+1}\left(j\right) $ we know that $ c_i^k = \sum_{q<i}b_qe_q^k + e_i^k $ so some of the chains $ a_p\partial f_p^{k+1} $ must be non-zero and thus they have unique pivot indices $ q = \Pivot_{k+1}\left(p\right) $, it follows that one of these indices must be equal to $ i $, which contradicts the fact that the matrix $ R_{ij} $ is reduced.
			
			\item If $ i\neq\Pivot_{k+1}\left(j\right) $ for all $ j $ and $ \partial f_i^k = 0 $, then $ c_i^k = f_i^{k} $ (or $ c_i^0 = \partial e_i^0 $ in case $ k=0 $). So, the addition of $ e_i^k $ marks the birth of this cycle at level $ u\left(e_i^k\right)=u\left(c_i^k\right) $ and this cycle never died since it is not the boundary of any $ \left(k+1\right) $-chain, so we have
			\begin{itemize}
				\item $ u\left(e_i^k\right) < t $ if and only if $ c_i^k $ is a cycle in $ C_k^t $ which is not a boundary.
			\end{itemize}
			This holds since $ u\left(c_i^k\right) = u\left(e_i^k\right) \geq t $ implies the $ c_i^k\notin C_k^t $ and $ u\left(c_i^k\right) = u\left(e_i^k\right) < t $ implies $ c_i^k\in C_k^t $, then it is a cycle by our assumption and it cannot be a boundary since $ i $ is not a pivot index of any chain.
		\end{enumerate}
		Thus, we showed that in both cases, $ c_i^k $ is a generator of $ H_k\left(C_\ast^t,\partial\right) $ if and only if there exists $ i $ such that $ u\left(e_i^k\right)<t\leq u\left(f_j^{k+1}\right) $ and $ i=\Pivot_{k+1}\left(j\right) $ or when $ u\left(e_i^k\right)<t $ and $ \partial f_i^k = 0 $ which exactly means that $ t\in I_i^k = \left(u\left(e_i^k\right),u\left(f_j^{k+1}\right)\right] $ for an essential pair $ \left(e_i^k,f_j^{k+1}\right) $, or $ t\in J_i^k = \left(u\left(e_i^k\right),\infty\right) $ when $ f_i^k $ is a cycle, as claimed.
	\end{proof}	
	
	During this process we used the pairing between chains $ e_i $ and $ f_j $ given by the pivot index, it is not clear up front why this pairing is well defined and does not depend on the reduced form of $ \partial_{ij} $ but it is indeed the case as stated by the \textit{pairing lemma} \cite[183]{EdHar10}:
	\begin{proposition}[Pairing lemma]
		The pairing $ i = \Pivot\left(j\right) $ is unique in the sense that it does not depends on the reduced form $ R $ obtained from $ \partial $ by left to right column operations.
	\end{proposition}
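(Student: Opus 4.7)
The plan is to prove the pairing lemma by showing that the positions of the pivots in any reduction $R$ of $\partial$ can be read off from rank data of submatrices of $\partial$ itself, and that this rank data is invariant under left-to-right column additions. Once the pivot positions are characterized intrinsically in terms of $\partial$, their independence from the particular reduction is immediate.

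For each pair of indices $(i,j)$, let $\partial_i^j$ denote the lower-left submatrix of $\partial$ consisting of rows with index $\geq i$ and columns with index $\leq j$, and similarly $R_i^j$ for any reduced form $R$. The first step is to observe that a single left-to-right column operation, which adds column $k$ to column $\ell$ with $k<\ell$, preserves the rank of every such lower-left submatrix: if $\ell\leq j$ then both columns lie inside the submatrix and the column span is unchanged, while if $\ell>j$ then column $\ell$ does not even appear in the submatrix. Iterating over the entire reduction procedure, this yields the key identity
\begin{equation*}
\rk\!\left(R_i^j\right)=\rk\!\left(\partial_i^j\right)\qquad\text{for all }i,j.
\end{equation*}

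Next I would extract the pivot positions of $R$ from these ranks. Because $R$ is reduced (each pivot sits in a distinct row), the rank of $R_i^j$ equals the number of columns $\ell\leq j$ whose pivot $\Pivot(\ell)$ satisfies $\Pivot(\ell)\geq i$. By inclusion–exclusion one then obtains the indicator
\begin{equation*}
\mathbbm{1}\bigl[\Pivot(j)=i\bigr]=\rk\!\left(R_i^j\right)-\rk\!\left(R_{i+1}^j\right)-\rk\!\left(R_i^{j-1}\right)+\rk\!\left(R_{i+1}^{j-1}\right),
\end{equation*}
and by the invariance above the right-hand side equals the analogous expression in the ranks of $\partial_i^j,\partial_{i+1}^j,\partial_i^{j-1},\partial_{i+1}^{j-1}$. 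Thus whether the pair $(i,j)$ is a pivot pair is determined entirely by $\partial$ and the chosen ordered basis, independently of which sequence of left-to-right column additions was used to produce $R$. This proves that the pairing $i=\Pivot(j)$ is unique.

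The argument is essentially a bookkeeping exercise and contains no genuine obstacle: the only subtle point is ensuring that the rank invariance holds for \emph{every} lower-left submatrix, including those that cut through the middle of a column operation, which is precisely why one must restrict to left-to-right additions. Right-to-left additions would destroy the invariance of $\rk(\partial_i^j)$ for certain $j$, and the lemma would fail. I would therefore emphasize this point carefully when verifying the rank invariance step, since it is both the heart of the proof and the source of the ordering hypothesis on the reduction procedure.
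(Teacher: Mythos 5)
Your argument is correct, and it is essentially the standard proof of the pairing lemma. Note, however, that the paper does not prove this statement at all: it simply cites it from Edelsbrunner--Harer \cite{EdHar10}, which is exactly where your argument comes from. Your two steps are exactly the ones in that reference: (1) left-to-right column additions preserve the rank of every lower-left submatrix $\partial_i^j$ (rows $\geq i$, columns $\leq j$), since the added column either lies strictly to the left (so the operation is an elementary column operation inside the submatrix) or strictly to the right (so the submatrix is untouched); and (2) for a \emph{reduced} matrix the distinct pivot rows make the nonzero columns of each $R_i^j$ linearly independent, so $\rk(R_i^j)$ counts columns $\ell\leq j$ with $\Pivot(\ell)\geq i$, and the inclusion--exclusion formula recovers $\mathbbm{1}[\Pivot(j)=i]$ from these ranks. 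Since the ranks agree with those of $\partial$, the pivot pairing is an invariant of $\partial$ together with the ordered basis, independent of the particular reduction. The one point worth spelling out explicitly when writing this up is that in step (2) you are implicitly restricting to \emph{nonzero} columns (zero columns have no pivot), which is harmless since they contribute nothing to the rank; otherwise the argument is complete and matches the cited source.
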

	With \Cref{lemma:barcodepairing} we see that the barcode of a filtered chain complex can be explicitly computed and next we conclude the argument by defining a filtered complex whose barcode approximates $ \mathcal{B}\left(\varphi\right) $.
	\begin{remark}
		The procedure outlined here for the reduction of a boundary matrix is referred to as the \textit{standard} reduction technique in computational topology - this process can be explained and visualized geometrically in a simple way. It's worth mentioning that an alternative construction is available, which turns $ e_i $ into a jordan basis $ f_i $ by an upper triangular change of basis, this construction is more algebraic but does not use any "heavy" tools and is outlined in \cite[section 6.2]{Pol19}.
		
		Furthermore, many improvements over the standard reduction technique exists, see for example \cite{Phat17} for an implementation of several such reduction techniques. Even though some of these improvements have better running times in special cases, the worst case complexity remains cubic (in the number of basis elements), as is the standard reduction.
	\end{remark}

	\paragraph[extracting the barcode]{}
	Using this on the boundary matrices $ \partial_j $ gives the barcode associated to $ \widetilde{V}\left(\varphi,S,K,L\right) $ and we bound the bottleneck distance of this barcode from $ \mathcal{B}\left(\varphi\right) $ using the following lemma, which bounds the interleaving distance of $ \widetilde{V}\left(\varphi,S,K,L\right) $ from $ V\left(\varphi\right) $.
	\begin{lemma} \label{lemma:interleavingpm}
		Let $ \varphi\in\Ham_c\left(\RRnn,\omega_0\right) $ and let $ S:\RRnn\times\RR^d\to\RR $ be a GFQI of $ \varphi $ with quadratic radii $ R_b,R_f $. Assume that the cell complexes $ K=X/\partial X \times Y $ and $ L = X/\partial X\times Y_0 $ satisfy the assumptions in \Cref{lemma:cellcmplxhomot} and denote
		\begin{equation*}
			r = \max\left\{\diam\left(e_\alpha\right);e_\alpha\text{ is a cell of }X\times Y\right\} \qquad l = \max_{\left(x,\xi\right)\in\BB_{R_b}\times\BB_{R_f}}\left|\nabla S\left(x,\xi\right)\right|.
		\end{equation*}
		Then, for all $ \delta > l\left(\frac{\sqrt{2n+d}}{m} + r\right) $, the persistence modules $ V\left(\varphi\right) $ and $ \widetilde{V}\left(\varphi,S,K,L\right) $ are $ \delta $-interleaved.
	\end{lemma}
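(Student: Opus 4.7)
The plan is to build explicit $\delta$-interleaving morphisms $F \colon V(\varphi) \to \widetilde{V}(\varphi,S,K,L)[\delta]$ and $G \colon \widetilde{V}(\varphi,S,K,L) \to V(\varphi)[\delta]$ from the homotopy equivalence of pairs $h,h^{-1}$ supplied by \Cref{lemma:cellcmplxhomot}, together with the identification from \Cref{lemma:compdomforG^b} of $V_t(\varphi)$ with the relative singular homology of the pair $(E^t\cap(\SS^{2n}\times\BB_{R_f}),\,\SS^{2n}\times\partial D^i)$. Because $S$ depends only on $\xi$ outside $\BB_{R_b}$, this pair is naturally carried by $(\BB_{R_b}/\partial\BB_{R_b}\times\BB_{R_f},\,\BB_{R_b}/\partial\BB_{R_b}\times\partial D^i)$, which matches the pair $(K,L)$ on the combinatorial side.

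The core ingredient is a filtration-compatibility estimate. By induction on cell dimension using the recursive definition of $\widetilde{S}$ along boundaries, one checks that $\widetilde{S}(e)=\max_{v} S(v)$ where the maximum ranges over the $0$-cells of $e$; hence $|\widetilde{S}(e)-S(p)|\le l\,r$ for every $p\in e$, using the $l$-Lipschitz bound on $S$ in $\BB_{R_b}\times\BB_{R_f}$ and the diameter bound $r$ on cells (outside $\BB_{R_b}\times\BB_{R_f}$ the generating function is the quadratic form $\mathcal{Q}$, whose variation is already controlled by $l$). Combining this with the $C^0$-bound $\|h^{\pm1}-\Id\|_{C^0}\le \sqrt{2n+d}/m$ obtained by taking the product of the bounds on $\widetilde{h_b}^{\pm1}$ and $h_f^{\pm1}$ in \Cref{lemma:cellcmplxhomot}, the triangle inequality yields: for any $\delta> l\bigl(\sqrt{2n+d}/m+r\bigr)$, the map $h^{-1}$ sends the smooth sublevel set $\{S\le t\}$ into the filtered sub-complex $K^{t+\delta}$, and symmetrically $h$ sends $K^t$ into $\{S\le t+\delta\}$.

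Passing to relative homology (using equivalence of cellular and singular homology on the pair $(K,L)$) then produces the candidate maps $F_t$ and $G_t$, and naturality under inclusions of sub-complexes and sublevel sets upgrades them to persistence-module morphisms of the required shifts. To verify the two interleaving squares, it suffices to observe that $G[\delta]\circ F$ and $F[\delta]\circ G$ are induced by $h_*\circ (h^{-1})_*=\Id_*$ and $(h^{-1})_*\circ h_*=\Id_*$, each followed by the inclusion of sub-complexes corresponding to the $2\delta$-shift; these compositions therefore coincide with the shift morphisms $\Phi_V^{2\delta}$ and $\Phi_{\widetilde V}^{2\delta}$.

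The main obstacle I anticipate is the careful treatment of the quotient $\BB_{R_b}/\partial\BB_{R_b}$ and of the collapsed vertex $b^0_{\mathcal{D}_0}$ of $X/\partial X$: the sample of $S$ at this vertex paired with any $f^0_\beta$ must consistently equal $\mathcal{Q}(\xi_\beta)$ so that both the filtration $\widetilde{S}$ is well-defined on $K\setminus L$ and the collapsed region matches the smooth filtration on $\BB_{R_b}/\partial\BB_{R_b}\times\BB_{R_f}$. This is handled once one notes that $S$ genuinely reduces to $\mathcal{Q}$ on $\partial X\times Y$, so the identifications induced by the quotient are compatible with the sampling; with that in place, confirming that $\widetilde{h_b},\widetilde{h_b^{-1}}$ from \Cref{lemma:cellcmplxhomot} descend to the quotient as filtered maps after the $\delta$-shift is routine, and the rest of the proof is a direct verification of the interleaving diagrams.
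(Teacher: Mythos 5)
Your proposal is correct and follows essentially the same route as the paper: use Lemma~\ref{lemma:compdomforG^b} to realize $V_t(\varphi)$ as relative homology on the compact domain, use the maps $h,h^{-1}$ from Lemma~\ref{lemma:cellcmplxhomot} to build candidate morphisms, split the error as $l\cdot\|h^{\pm1}-\Id\|_{C^0}+l\cdot r$ via the Lipschitz bound, and verify the interleaving squares from $h\circ h^{-1}\simeq\Id$. Your flagged concern about the collapsed vertex $b^0_{\mathcal{D}_0}$ is exactly the technicality the paper resolves by case analysis on whether $x$ lies in $\Int X$, in $\Int X\setminus\Int\BB_{R_b}$, or in $\partial X$, using $S=\mathcal{Q}$ outside $\BB_{R_b}$ as you indicate.
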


	\begin{proof}
		The approximated p.m $ \widetilde{V}\left(\varphi,S,K,L\right) $ is given by the p.m associated to the homology of the filtered chain complex
		\begin{equation*}
			\xymatrix@C=2pc
			{
				\dots \ar[r]^-{\partial_{k+i+1}}
				& \spn_{\ZZ_2}\left\{e_a^{k+i}; e_a^{k+i} \text{ is a $ \left(k+i\right) $-cell of $ K\setminus L $}\right\} \ar[r]^-{\partial_{k+i}}
				& \dots,
			}
		\end{equation*}
		with the filtration induced by $ \widetilde{S} $ which is an extension of $ S:\left(\RRnn\cup\left\{\infty\right\}\right)\times\RR^d\to\RR $ from the set of vertices $ K^0 $ to all cells taken as the maximal value over all vertices contained in each cell, i.e for $ t\in\RR $, the spaces $ \widetilde{V}_t $ are the homology associated to the chain complex
		\begin{equation} \label{eq:Vtilde_t}
			\xymatrix@C=2pc
			{
				\dots \ar[r]^-{\partial_{\ast+i+1}}
				& \spn_{\ZZ_2}\left\{e_a^{\ast+i}; e_a^{\ast+i} \text{ is an $ \left(\ast+i\right) $-cell of $ K\setminus L $ and $ \widetilde{S}\left(e_a^{\ast+i}\right)<t $}\right\} \ar[r]^-{\partial_{\ast+i}}
				&\dots.
			}
		\end{equation}
	
		Recall that we have $ K = X/\partial X\times Y $ for cell complexes $ X,Y $ satisfying the assumption of \Cref{lemma:cellcmplxhomot} and for every point $ \left(\left[x\right],\xi\right)\in K $ we denote by $ e\left(\left[x\right],\xi\right) $ the cell $ e_a^k $ which $ \left(\left[x\right],\xi\right)\in\Int e_a^k $ (in case $ k=0 $ we just have $ e\left(\left[x\right],\xi\right) = \left(\left[x\right],\xi\right) $). We then consider the extension $ \widetilde{S} $ as a function defined on the total space $ K\setminus L $ by setting
		\begin{equation*}
			\widetilde{S}\left(\left[x\right],\xi\right) = 	\widetilde{S}\left(e\left(\left[x\right],\xi\right)\right),
		\end{equation*}
		and for $ t\in\RR $ we denote by $ K_t = L\cup\left\{\left(\left[x\right],\xi\right)\in K\setminus L; \widetilde{S}\left(\left[x\right],\xi\right)<t\right\} $ then $ K_t $ is a sub-complex of $ K $ which contains $ L $ for all $ t $ and the space $ H_{\ast+i}\left(K_t,L;\ZZ_2\right) = H_{\ast+i}^{CW}\left(K_t,L;\ZZ_2\right) $ is exactly the homology associated to the chain complex in \Cref{eq:Vtilde_t} so we see that in fact $ \widetilde{V}_t =  H_{\ast+i}\left(K_t,L;\ZZ_2\right) $. We further introduce the following notations.
		\begin{itemize}
			\item The $ k $-cells of $ X $ are denoted by $ \left\{b_\alpha^k\right\}_{\alpha\in\mathcal{B}_k} $ and $ k $-cells of $ \partial X $ by $ \left\{b_\alpha^k\right\}_{\alpha\in\mathcal{D}_k\subset\mathcal{B}_k} $.
			
			\item The $ 0<k $-cells of $ X/\partial X $ are $ \left\{b_\alpha^k\right\}_{\alpha\in\mathcal{B}_k\setminus\mathcal{D}_k} $ and the $ 0 $-cells are $ \left\{b_\alpha^0\right\}_{\alpha\in\mathcal{B}_0\setminus\mathcal{D}_0}\cup \left[b_{\mathcal{D}_0}^0\right] $
			
			\item The $ k $-cells of $ Y $ are denoted by $ \left\{f_\beta^k\right\}_{\beta\in\mathcal{F}_k} $ and the $ k $-cells of $ Y_0 $ by $ \left\{f_\beta^k\right\}_{\beta\in\widetilde{\mathcal{F}}_k\subset\mathcal{F}_k} $.
			
			\item The $ k $-cells of $ K $ are then $ \left\{b_\alpha^j\times f_\beta^{k-j}\right\}_{\left(\alpha,\beta\right)\in\mathcal{B}_j\setminus\mathcal{D}_j\times\mathcal{F}_{k-j}} $ or $ \left\{\left[b_{\mathcal{D}_0}^0\right]\times f_\beta^k\right\}_{\beta\in\mathcal{F}_k} $.
			
			\item The $ k $-cells of $ L $ are thus $ \left\{b_\alpha^j\times f_\beta^{k-j}\right\}_{\left(\alpha,\beta\right)\in\mathcal{B}_j\setminus\mathcal{D}_j\times\widetilde{\mathcal{F}}_{k-j}} $ or $ \left\{\left[b_{\mathcal{D}_0}^0\right]\times f_\beta^k\right\}_{\beta\in\widetilde{\mathcal{F}}_k} $.
			
			\item By definition of $ \widetilde{S} $ as a function on the total space, each point $ \left(\left[x\right],\xi\right)\in K\setminus L $ has a vertex $ e_a^0\in K^0 $ such that $ \widetilde{S}\left(e\left(\left[x\right],\xi\right)\right) = \widetilde{S}\left(e_a^0\right) $, we denote this vertex as $ e^0\left(\left[x\right],\xi\right) $.
			
			\item Note that $ e^0\left(\left[x\right],\xi\right) $ is a $ 0 $-cell of $ K $ which takes one of two forms:
			\begin{equation*}
				b_\alpha^0\times f_\beta^0\qquad\text{ or }\qquad\left[b_{\mathcal{D}_0}^0\right]\times f_\beta^0,
			\end{equation*}
			in both cases we denote the vertex $ f_\beta^0\in\RR^d $ as $ f^0\left(\xi\right) $.
			
			\item When $ x\in\Int X $ we have $ \left[x\right] = x $ and the vertex $ e^0\left(x,\xi\right) $ might still take on both forms (that depends on how close $ x $ is to the boundary and on the values of $ S $ near $ \left(x,\xi\right) $). In both cases, there exists a vertex $ b_\alpha^0\in X $ such that $ e^0\left(x,\xi\right) = \left[b_\alpha^0\right]\times f^0\left(\xi\right) $ and $ \left|\left(x,\xi\right) - \left(b_\alpha^0,f^0\left(\xi\right)\right)\right|\leq r $. We denote the vertex $ b_\alpha^0\in\RRnn $ as $ b^0\left(\left[x\right]\right) $.
			
			\item When $ x\in\partial X $, the vertex $ e^0\left(\left[x\right],\xi\right) $ must be of the form $ \left[b_{\mathcal{D}_0}^0\right]\times f^0\left(\xi\right) $ and $ b^0\left(\left[x\right]\right) $ is no longer well defined (since it depends on the choice of representative for $ \left[x\right] $), but in this case, the quadratic at infinity nature of $ S $ implies that $ \widetilde{S}\left(\left[x\right],\xi\right) = \mathcal{Q}\left(f^0\left(\xi\right)\right) $.
			
		\end{itemize}
		Let $ \delta > l\left(\frac{\sqrt{2n+d}}{m} + r\right) $, we are now ready to show that $ \widetilde{V}\left(\varphi,S,K,L\right) $ and $ V\left(\varphi\right) $ are $ \delta $-interleaved. Note that \Cref{lemma:compdomforG^b} implies the p.m $ V\left(\varphi\right) $ is given by the collection
		\begin{equation*}
			V_t = H_{\ast+i}\left(E^t,\BB_{R_b}/\partial\BB_{R_b}\times\partial D^i\right) = H_{\ast+i}\left(\left(\BB_{R_b}/\partial\BB_{R_b}\times\partial D^i\right)\cup\left\{S<t\right\},\BB_{R_b}/\partial\BB_{R_b}\times\partial D^i\right)
		\end{equation*}
		where we view $ S $ as a function on $ E = \BB_{R_b}/\partial\BB_{R_b}\times\BB_{R_f} $ which by \Cref{eq:Rflarge} takes its minimum on the set $ \BB_{R_b}/\partial\BB_{R_b}\times\partial D^i $ so we set
		\begin{equation*}
			E^t = \left(\BB_{R_b}/\partial\BB_{R_b}\times\partial D^i\right)\cup\left\{\left(\left[x\right],\xi\right)\in\BB_{R_b}/\partial\BB_{R_b}\times\BB_{R_f};S\left(x,\xi\right)<t\right\},
		\end{equation*}
		and recall the p.m $ \widetilde{V}\left(\varphi,S,K,L\right) $ is given by the collection
		\begin{equation*}
			\widetilde{V}_t =  H_{\ast+i}\left(K_t,L;\ZZ_2\right) = H_{\ast+i}\left(L\cup\left\{\widetilde{S}<t\right\},L\right). 
		\end{equation*}
	
		We shall show that the maps $ h=\widetilde{h_b}\oplus h_f $ and $h^{-1}=\widetilde{h_b^{-1}}\oplus h_f^{-1} $ given by \Cref{lemma:cellcmplxhomot} induces $ \delta $-interleaving morphisms between these persistence modules. Starting with
		\begin{equation*}
			\left(K,L\right) \xrightarrow{h = \widetilde{h}_b\oplus h_f}\left(\BB_{R_b}/\partial\BB_{R_b}\times\BB_{R_f},\BB_{R_b}/\partial\BB_{R_b}\times\partial D^i\right),
		\end{equation*}
		let $ \left(\left[x\right],\xi\right)\in K $, we show that $ \left|S\left(h\left(\left[x\right],\xi\right)\right)-\widetilde{S}\left(\left[x\right],\xi\right)\right|<\delta $ by relating the value of $ \widetilde{S}\left(\left[x\right],\xi\right) $ to the value of $ S $ on the vertex $ e^0\left(\left[x\right],\xi\right) $ and then bounding the distence between this vertex and $ h\left(\left[x\right],\xi\right) $ by using the bound on $ \left\|h_b\oplus h_f-\Id\right\|_{C^0} $ and the diameter of the complex.
		
		Because of the quotient $ X/\partial X $ is involved, this argument has some technicalities to work out such as when the vertex $ e^0\left(\left[x\right],\xi\right) $ or the image $ h\left(\left[x\right],\xi\right) $ has multiple representatives in the quotient so formally we have to consider several different cases:
		\begin{enumerate}
			\item If $ x\in\Int X $, then $ \left(\left[x\right],\xi\right) = \left(x,\xi\right) $ and we have the vertex $ \left(b^0\left(x\right),f^0\left(\xi\right)\right)\in X\times Y\subset\RRnn\times\RR^d $ such that
			\begin{equation*}
				\left|\left(x,\xi\right) - \left(b^0\left(x\right),f^0\left(\xi\right)\right)\right|\leq r,
			\end{equation*}
			and $ e^0\left(x,\xi\right) = \left[b^0\left(x\right)\right]\times f^0\left(\xi\right) $. We further consider two sub-cases, according to the form of the vertex $ e^0\left(x,\xi\right) $:
			\begin{enumerate}
				\item If $ e^0\left(x,\xi\right)\in\Int X\times Y $, then $ e^0\left(x,\xi\right) = b^0\left(x\right)\times f^0\left(\xi\right) $ so we have
				\begin{equation*}
					\widetilde{S}\left(\left[x\right],\xi\right) = \widetilde{S}\left(e^0\left(x,\xi\right)\right) = S\left(b^0\left(x\right),f^0\left(\xi\right)\right),
				\end{equation*}
				and we then consider two (final) sub-cases, according to the image of $ x $ under the homotopy equivalence $ h_b:\left(X,\partial X\right)\to\left(\BB_{R_b},\partial\BB_{R_b}\right) $ given by \Cref{lemma:cellcmplxhomot}:
				\begin{enumerate}
					\item If $ x\in\Int\BB_{R_b}\subset X $, then since $ h_b $ is a homotopy rel $ \BB_{R_b} $ we have $ h_b\left(x\right) = x\in\Int\BB_{R_b} $ so $ \widetilde{h}_b\left(\left[x\right]\right) = \left[h_b\left(x\right)\right] = h_b\left(x\right) $ and so the inequality
					\begin{equation*}
						\left|h\left(x,\xi\right) - \left(x,\xi\right)\right|=\left|\left(h_b\left(x\right),h_f\left(\xi\right)\right) - \left(x,\xi\right)\right|\leq\frac{\sqrt{2n+d}}{m}
					\end{equation*}
					holds since by \Cref{lemma:cellcmplxhomot} we have $ \left\|h_b\oplus h_f-\Id\right\|_{C^0}\leq\frac{\sqrt{2n+d}}{m}  $. Thus, using the Lipshitz property of $ S $ we get
					\begin{equation*}
						\begin{split}
							\left|S\left(h\left(\left[x\right],\xi\right)\right) - \widetilde{S}\left(\left[x\right],\xi\right)\right| &= \left|S\left(h\left(x,\xi\right)\right) - S\left(b^0\left(x\right),f^0\left(\xi\right)\right)\right| \\
							&\leq l\left(\underbrace{\left|h\left(x,\xi\right)-\left(x,\xi\right)\right|}_{\leq\frac{\sqrt{2n+d}}{m}}+\underbrace{\left|\left(x,\xi\right)-\left(b^0\left(x\right),f^0\left(\xi\right)\right)\right|}_{\leq r}\right) < \delta
						\end{split}
					\end{equation*}
				
					\item If $ x\in\Int X\setminus\Int\BB_{R_b} $, then the expression $ \left|h\left(x,\xi\right)-\left(x,\xi\right)\right| $ is no loner well defined since $ \left[h_b\left(x\right)\right]\in\left[\partial\BB_{R_b}\right] $, but we do know that before taking the quotient we have
					\begin{equation*}
						\left|\left(h_b\left(x\right),h_f\left(\xi\right)\right)-\left(x,\xi\right)\right|\leq\frac{\sqrt{2n+d}}{m},
					\end{equation*}
					and since the value of $ S $ does not depend on the quotient $ \widetilde{h}_b\left(x\right) $ (rather only on $ h_b\left(x\right) $) we again see that
					\begin{equation*}
						\begin{split}
							\left|S\left(h\left(\left[x\right],\xi\right)\right) - \widetilde{S}\left(\left[x\right],\xi\right)\right| &= \left|S\left(h_b\left(x\right),h_f\left(\xi\right)\right) - S\left(b^0\left(x\right),f^0\left(\xi\right)\right)\right| \\
							&\leq l\left(\left|\left(h_b\left(x\right),h_f\left(\xi\right)\right)-\left(x,\xi\right)\right|+\left|\left(x,\xi\right)-\left(b^0\left(x\right),f^0\left(\xi\right)\right)\right|\right)<\delta.
						\end{split}
					\end{equation*}
				\end{enumerate}
			
			\item If $ e^0\left(x,\xi\right)\in\left[\partial X\right]\times Y $ we may still say that $ \widetilde{S}\left(\left[x\right],\xi\right) = S\left(b^0\left(x\right),f^0\left(\xi\right)\right) $ since $\left[b^0\left(x\right)\right] = \left[b_{\mathcal{D}_0}^0\right]$ therefore $ b^0\left(x\right)\in\partial X $ and so in particular $ \left|b^0\left(x\right)\right|\geq R_b $, we get
			\begin{equation*}
				\widetilde{S}\left(\left[x\right],\xi\right) = \widetilde{S}\left(e^0\left(x,\xi\right)\right) = \mathcal{Q}\left(f^0\left(\xi\right)\right) = S\left(b^0\left(x\right),f^0\left(\xi\right)\right),
			\end{equation*}
			and both sub-cases follow in the same way.
			\end{enumerate}
		
			\item If $ x\in\partial X $, the vertex $ b^0\left(\left[x\right]\right) $ is no longer well defined but $ f^0\left(\xi\right) $ still satisfies
			\begin{equation*}
				\left|\xi - f^0\left(\xi\right)\right|\leq r,
			\end{equation*}
			since $ f^0\left(\xi\right) $ is a vertex that belongs to the same cell as $ \xi $ in $ Y $. The vertex $ e^0\left(\left[x\right],\xi\right) $ must be of the form $ \left[b_{\mathcal{D}_0}^0\right]\times f^0\left(\xi\right) $ since $ \left(\left[x\right],\xi\right) $ is contained in the sub-complex $ \left[\partial X\right]\times Y $, so we also still have
			\begin{equation*}
				\widetilde{S}\left(\left[x\right],\xi\right) = \widetilde{S}\left(e^0\left(x,\xi\right)\right) = \mathcal{Q}\left(f^0\left(\xi\right)\right).
			\end{equation*}
			Finally $ x $ must lie in $ X\setminus\BB_{R_b} $ therefore $ h_b\left(x\right)\in\partial\BB_{R_b} $ and so it's enough to know that $ \left|h_f\left(\xi\right)-\xi\right|\leq \frac{\sqrt{d}}{m} $ to get
			\begin{equation*}
				\begin{split}
					\left|S\left(h\left(\left[x\right],\xi\right)\right) - \widetilde{S}\left(\left[x\right],\xi\right)\right| &= \left|S\left(h_b\left(x\right),h_f\left(\xi\right)\right) - \mathcal{Q}\left(f^0\left(\xi\right)\right)\right| \\
					&=\left|\mathcal{Q}\left(h_f\left(\xi\right)\right) - \mathcal{Q}\left(f^0\left(\xi\right)\right)\right| \\
					&\leq l\left(\left|h_f\left(\xi\right)-\xi\right|+\left|\xi-f^0\left(\xi\right)\right|\right)<\delta.
				\end{split}
			\end{equation*}
		\end{enumerate}
		
		So we indeed see that if $ \left(\left[x\right],\xi\right)\in K_t $ then $ h\left(\left[x\right],\xi\right)\in E^{t+\delta} $ and since the map $ h $ sends $ L $ to $ \BB_{R_b}/\partial\BB_{R_b}\times\partial D^i $ we get a well defined map
		\begin{equation*}
			\left(K_t,L\right)\xrightarrow{h}\left(E^{t+\delta},\BB_{R_b}/\partial\BB_{R_b}\times\partial D^i\right),
		\end{equation*}
		which then induces a map in relative homology $ h_*:\widetilde{V}_t\to V_{t+\delta} $, this maps clearly commutes with the morphisms $ \pi_{s+\delta,t+\delta}:V_{s+\delta}\to V_{t+\delta} $ and $ \widetilde{\pi}_{s,t}:\widetilde{V}_s\to\widetilde{V}_t $ induced by inclusions and so it defines a persistence morphism
		\begin{equation*}
			h_*:\widetilde{V}\left(\varphi,S,K,L\right)\to V\left(\varphi\right)\left[\delta\right].
		\end{equation*}
		
		Next we use a similar argument to show the map
		\begin{equation*}
			\left(\BB_{R_b}/\partial\BB_{R_b}\times\BB_{R_f},\BB_{R_b}/\partial\BB_{R_b}\times\partial D^i\right) \xrightarrow{h^{-1}=\widetilde{h}_b^{-1}\oplus h_f^{-1}}\left(K,L\right),
		\end{equation*}
		also satisfies that $ \left|S\left(\left[x\right],\xi\right)-\widetilde{S}\left(h^{-1}\left(\left[x\right],\xi\right)\right)\right| < \delta $, this is again done by comparing the value of $ \widetilde{S} $ on $ h^{-1}\left(\left[x\right],\xi\right) $ to the value of $ S $ on the vertex $ e^0\left(h^{-1}\left(\left[x\right],\xi\right)\right) $ then bounding the distance from this vertex to $ \left(\left[x\right],\xi\right) $ by as before. Recall that $ h^{-1} = \widetilde{h_b^{-1}}\oplus h_f^{-1} $ is not the inverse of $ h $ but rather the map such that $ h\circ h^{-1} \sim \Id $ and $ \widetilde{h_b^{-1}}:\BB_{R_b}/\partial\BB_{R_b}\to X/\partial X $ is the map induced on the quotient by $ h_b^{-1}:\left(\BB_{R_b},\partial\BB_{R_b}\right)\to\left(X,\partial X\right) $. We show the concrete construction as before for completeness and since there are fewer cases to consider, let $ \left(\left[x\right],\xi\right)\in\BB_{R_b}/\partial\BB_{R_b}\times\BB_{R_f} $, then:
		\begin{enumerate}
			\item If $ x\in\Int\BB_{R_b} $, then $ \left[x\right] = x $ and the point $ h_b^{-1}\left(x\right)\in X $ is well defined so like our previous notations, there exists a vertex $ b^0\left(h_b^{-1}\left(x\right)\right)\in X $ such that
			\begin{equation*}
				e^0\left(h^{-1}\left(\left[x\right],\xi\right)\right) = \left[b^0\left(h_b^{-1}\left(x\right)\right)\right]\times f^0\left(h_f^{-1}\left(\xi\right)\right),
			\end{equation*}
			and $ \widetilde{S}\left(h^{-1}\left(\left[x\right],\xi\right)\right) = \widetilde{S}\left(e^0\left(h^{-1}\left(\left[x\right],\xi\right)\right)\right) $. Moreover, since the value of $ S $ is independent of the choice different representatives for the class $ \left[b^0\left(h_b^{-1}\left(x\right)\right)\right] $ we get
			\begin{equation*}
				\begin{split}
					\left|S\left(\left[x\right],\xi\right)-\widetilde{S}\left(h^{-1}\left(\left[x\right],\xi\right)\right)\right| &= \left|S\left(x,\xi\right) - \widetilde{S}\left(e^0\left(h^{-1}\left(\left[x\right],\xi\right)\right)\right)\right| \\
					&= \left|S\left(x,\xi\right) - \widetilde{S}\left(\left[b^0\left(h_b^{-1}\left(x\right)\right)\right]\times f^0\left(h_f^{-1}\left(\xi\right)\right)\right)\right| \\
					&= \left|S\left(x,\xi\right) - S\left(b^0\left(h_b^{-1}\left(x\right)\right),f^0\left(h_f^{-1}\left(\xi\right)\right)\right)\right|,
				\end{split}
			\end{equation*}
			which is then bounded by
			\begin{equation*}
				l\left(\underbrace{\left|\left(x,\xi\right)-\left(h_b^{-1}\left(x\right),h_f^{-1}\left(\xi\right)\right)\right|}_{\leq\frac{\sqrt{2n+d}}{m}}+\underbrace{\left|\left(h_b^{-1}\left(x\right),h_f^{-1}\left(\xi\right)\right) - \left(b^0\left(h_b^{-1}\left(x\right)\right),f^0\left(h_f^{-1}\left(\xi\right)\right)\right)\right|}_{\leq r}\right) <\delta.
			\end{equation*}
		
		\item If $ x\in\partial\BB_{R_b} $, then $ S\left(\left[x\right],\xi\right) = \mathcal{Q}\left(\xi\right) $ and by the assumptions on $ h^{-1} $ we get $ h^{-1}\left(\left[x\right],\xi\right)\in\left[\partial X\right]\times Y $ therefore also $ e^0\left(h^{-1}\left(\left[x\right],\xi\right)\right)\in\left[\partial X\right]\times Y $, so $ \widetilde{S}\left(e^0\left(h^{-1}\left(\left[x\right],\xi\right)\right)\right) = \mathcal{Q}\left(f^0\left(h_f^{-1}\left(\xi\right)\right)\right) $ and we have
		\begin{equation*}
			\begin{split}
				\left|S\left(\left[x\right],\xi\right)-\widetilde{S}\left(h^{-1}\left(\left[x\right],\xi\right)\right)\right| &= \left|\mathcal{Q}\left(\xi\right) - \widetilde{S}\left(e^0\left(h^{-1}\left(\left[x\right],\xi\right)\right)\right)\right| \\
				&= \left|\mathcal{Q}\left(\xi\right) - \mathcal{Q}\left(f^0\left(h_f^{-1}\left(\xi\right)\right)\right)\right| \\
				&\leq l\left(\left|\xi-h_f^{-1}\left(\xi\right)\right|+\left|h_f^{-1}\left(\xi\right) - f^0\left(h_f^{-1}\left(\xi\right)\right)\right|\right) < \delta.
			\end{split}
		\end{equation*}
		\end{enumerate}
		
		So, we see that if $ \left(\left[x\right],\xi\right)\in E^t $ then $ h^{-1}\left(\left[x\right],\xi\right)\in K^{t+\delta} $ and since the map $ h^{-1} $ sends $ \BB_{R_b}/\partial\BB_{R_b}\times\partial D^i $ to $ L $ we get a well defined map
		\begin{equation*}
			\left(E^{t},\BB_{R_b}/\partial\BB_{R_b}\times\partial D^i\right)\xrightarrow{h^{-1}}\left(K_{t+\delta},L\right),
		\end{equation*}
		which induces a map in relative homology $ h^{-1}_*:V_t\to\widetilde{V}_{t+\delta} $, where as before, this map commutes with the morphisms induced by inclusions and therefore defines a persistence morphism
		\begin{equation*}
			h_*^{-1}:V\left(\varphi\right)\to\widetilde{V}\left(\varphi,S,K,L\right)\left[\delta\right].
		\end{equation*}
	
		Finally, we claim that the morphisms $ h_*,h_*^{-1} $ are indeed $ \delta $-interleaving morphisms, as the following diagrams show:
		\begin{equation*}
			\xymatrix@C=2pc
			{
				V_t \ar[r]^-{h_*^{-1}} \ar@/_1pc/[rr]_-{\left(in\right)_*} & \widetilde{V}_{t+\delta} \ar[r]^-{h_*} & V_{t+2\delta} && \widetilde{V}_t \ar[r]^-{h_*} \ar@/_1pc/[rr]_-{\left(in\right)_*} & V_{t+\delta} \ar[r]^-{h_*^{-1}} & \widetilde{V}_{t+2\delta}
			},
		\end{equation*}
		since $ h\circ h^{-1} \sim \Id $ and $ h^{-1}\circ h\sim \Id $, they induce isomorphism in homology and therefore we indeed get commutativity in the above diagrams, which concludes the proof.
	\end{proof}
	
	We then conclude the algorithm by gathering all boundary matrices $ \partial_j $ into one matrix $ \partial $ ordered by degree, then using reduction algorithms, such as the standard reduction described earlier (see \cite{Phat17} for an implementation of several such reduction algorithms), we get a boundary matrix $ R $ which is a reduced form of $ \partial $ and the barcode $ \mathcal{B} $ obtained from $ R $ as in \Cref{lemma:barcodepairing} is then the output of the algorithm. Note that by the isometry theorem (\Cref{thm:isometrythm}), we have
	\begin{equation*}
		\dbot\left(\mathcal{B},\mathcal{B}\left(\varphi\right)\right) = 	\dint\left(\widetilde{V}\left(\varphi,S,K,L\right),V\left(\varphi\right)\right).
	\end{equation*}
	Thus, according to \Cref{lemma:interleavingpm} we have $ \dbot\left(\mathcal{B},\mathcal{B}\left(\varphi\right)\right)\leq l\left(\frac{\sqrt{2nN}}{m}+r\right) $ and we turn to bound this distance. From now on, we denote by $ C $ constants independent of $ N,T $ and $ m $, which may not all be the same. 
	\begin{claim}
		$ \max_{\left(q_N,p_0,\xi\right)\in\BB_{R_b}\times\BB_{R_f}}\left|\nabla S\left(q_N,p_0,\xi\right)\right| \leq C\cdot TN^{\frac{3}{2}} $ for a constant $ C $ which depends only on $ R $.
	\end{claim}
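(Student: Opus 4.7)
The plan is to bound $|\nabla S|$ on $\BB_{R_b}\times\BB_{R_f}$ by unwinding the explicit formula for $S$ in \Cref{eq:defGFQI} and using the estimates already established for $S^\prime$, $\mathcal{Q}$, $M$, $C_0$, $R_f^-$ and $R_f^+$ in the construction. Recall that $M = \sqrt{2}T\bigl((N-1)+\sum_{j=1}^{N-1}\sqrt{j}\bigr)\leq C\, TN^{3/2}$, that $C_0\leq NTR$, and that $R_f^+ = C_0/M+2M\leq C(R/\sqrt N + TN^{3/2})\leq C\, TN^{3/2}$ once $R$ is fixed. Also, $R_f=\sqrt{(R_f^+)^2-\sum\min S_j}\leq\sqrt{(R_f^+)^2+NTR}\leq C\, TN^{3/2}$, so in particular $|\nabla\mathcal{Q}(\xi)|=2|\xi|\leq 2R_f\leq C\, TN^{3/2}$ on $\BB_{R_f}$. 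These will be the ``atomic'' estimates that everything reduces to.

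First I would split $\BB_{R_f}$ into three radial regions according to the cut-off $\rho$: (a) $|\xi|\leq R_f^-$, where $S=S^\prime$; (b) $R_f^-\leq|\xi|\leq R_f^+$, where $S=\rho S^\prime+(1-\rho)\mathcal{Q}$ is the genuine interpolation and $\rho^\prime$ may be nonzero; and (c) $|\xi|\geq R_f^+$, where $S=\mathcal{Q}$. Region (c) is immediate: $|\nabla S|=|\nabla\mathcal{Q}|\leq 2R_f\leq C\, TN^{3/2}$. Region (a) is handled by writing $\nabla S^\prime = \nabla\mathcal{Q}+\nabla(S^\prime-\mathcal{Q})$ in the fiber direction and bounding the base derivatives directly from the formula for $S^\prime$: each of the $2n$ base components of $\nabla_{(q_N,p_0)}S^\prime$ is a sum of at most $N$ partial derivatives of the $S_j$'s, each bounded by $T$, giving $|\nabla_{\text{base}}S^\prime|\leq\sqrt{2n}\,NT$; in the fiber direction $|\nabla_{\text{fiber}}S^\prime|\leq|\nabla\mathcal{Q}|+M\leq 2R_f^-+M\leq C\, TN^{3/2}$.

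Region (b) is the only mildly delicate one. There I compute
\begin{equation*}
\nabla S=\rho\,\nabla S^\prime+(1-\rho)\nabla\mathcal{Q}+\rho^\prime(|\xi|)\,\tfrac{\xi}{|\xi|}\bigl(S^\prime-\mathcal{Q}\bigr),
\end{equation*}
where the last term only contributes to fiber derivatives and the $\rho^\prime$ factor involves the subtle combination of a small slope against a potentially large value $S^\prime-\mathcal{Q}$. Here I would use the pointwise bound $|S^\prime(x,\xi)-\mathcal{Q}(\xi)|\leq C_0+M|\xi|$ (integrating $|\partial_\xi(S^\prime-\mathcal{Q})|\leq M$ from the origin and using $|S^\prime(x,0)|\leq C_0$) together with $|\rho^\prime|\leq 1/M$ from the definition of $\rho$. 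This gives
\begin{equation*}
\bigl|\rho^\prime(S^\prime-\mathcal{Q})\bigr|\leq \frac{C_0+MR_f^+}{M}=\frac{C_0}{M}+R_f^+\leq C\, TN^{3/2}.
\end{equation*}
The two remaining convex-combination terms are handled exactly as in region (a), each bounded by $C\, TN^{3/2}$. Collecting the contributions from all three regions yields the uniform bound $|\nabla S|\leq C\, TN^{3/2}$ on $\BB_{R_b}\times\BB_{R_f}$.

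The main bookkeeping obstacle is simply tracking which of $M$, $R_f^-$, $R_f^+$, $R_f$, $C_0/M$ dominates in each estimate and confirming that the cross-term $\rho^\prime(S^\prime-\mathcal{Q})$ is truly of order $TN^{3/2}$ rather than accidentally of order $M^2/M=M$ with a bad constant; the key point is that the two summands $C_0/M$ and $R_f^+$ were precisely the quantities balanced in \Cref{lemma:GFQI} when choosing the slope of $\rho$, which is what makes the estimate tight. Once that cancellation is observed, absorbing the remaining dimensional constants into $C$ (which then depends on $n$ and $R$) completes the proof.
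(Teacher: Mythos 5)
Your approach mirrors the paper's: differentiate the explicit formula $S = \rho\,(S'-\mathcal{Q}) + \mathcal{Q}$ and bound $|\rho'(S'-\mathcal{Q})|$, $|\nabla(S'-\mathcal{Q})|$, and $|\nabla\mathcal{Q}|$ separately using the previously computed bounds for $M$, $C_0$, $R_f^\pm$, $R_f$. The paper does this with a single uniform estimate rather than a three-region case split, but the atomic ingredients are identical, and your handling of the cross term $\rho'(S'-\mathcal{Q})$ via $|\rho'|\le 1/M$ and $|S'-\mathcal{Q}|\le C_0+M|\xi|$ is sound.

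There is, however, a small but genuine defect: you bound the base gradient by $|\nabla_{\text{base}}S'|\le\sqrt{2n}\,NT$, which introduces a spurious $\sqrt{n}$, and you then concede that $C$ ends up depending on $n$ as well as $R$. The claim as stated (and as needed to derive the $C\sqrt{n}TN^2/m$ bound in \Cref{thm:mainthm}, where the $\sqrt{n}$ is accounted for separately by the cell diameter $r$) requires $C$ to depend on $R$ alone. This is easily repaired: since $\nabla_{q_N}S' = \sum_j \nabla_Q S_j(\tilde q_j,\tilde p_{j-1})$ is a sum of $N$ vectors in $\RR^n$ each of norm at most $T$, one has the dimension-free bound $|\nabla_{q_N}S'|\le NT$ by the triangle inequality for the Euclidean norm, and likewise $|\nabla_{p_0}S'|\le NT$, giving $|\nabla_{\text{base}}S'|\le\sqrt{2}\,NT$ with no $n$ appearing. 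The paper sidesteps the issue by bounding $|\nabla(S'-\mathcal{Q})|\le\sum_j\max|\nabla S_j|\sqrt{2(N-j+2)}$ via the chain rule in one step, which already handles base and fiber directions together without a dimensional factor. With that one line tightened, your argument proves the claim as stated.
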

	\begin{proof}
		Using \Cref{eq:defGFQI} we see that
		\begin{equation*}
			S\left(q_N,p_0,\xi\right) = \rho\left(\left|\xi\right|\right)\left(S^\prime\left(q_N,p_0,\xi\right)-\mathcal{Q}\left(\xi\right)\right) + \mathcal{Q}\left(\xi\right),
		\end{equation*}
		therefore, since
		\begin{equation*}
			S^\prime\left(q_N,p_0;\xi\right) - \mathcal{Q}\left(\xi\right) = \sum_{j=1}^NS_j\left(q_N+\sum_{k=j}^N\xi^-_k-\sum_{k=j}^N\xi^+_k,p_0+\xi^-_{j-1}+\xi^+_{j-1}\right),
		\end{equation*}
		we differentiate all $ S_j\left(q_N+\sum_{k=j}^N\xi^-_k-\sum_{k=j}^N\xi^+_k,p_0+\xi^-_{j-1}+\xi^+_{j-1}\right) $ to get
		\begin{equation*}
			\left|\nabla\left(S^\prime\left(q_N,p_0,\xi\right)-\mathcal{Q}\left(\xi\right)\right)\right| \leq \sum_{j=1}^N\max\left|\nabla S_j\right|\sqrt{2\left(N-j+2\right)},
		\end{equation*}
		and using $ \left|\rho^\prime\right|\leq\frac{1}{M} $ we see that
		\begin{equation*}
			\left|\nabla S\left(q_N,p_0,\xi\right)\right| \leq \frac{1}{M}\sum_{j=1}^N \max\left|S_j\right| + \sum_{j=1}^N\max\left|\nabla S_j\right|\sqrt{2\left(N-j+2\right)} + \max_{\xi\in\BB_{R_f}}\left|\nabla\mathcal{Q}\left(\xi\right)\right|,
		\end{equation*}
		$ S_j $ generates $ \varphi_j $ so we know that $ \left|\nabla S_j\right| = \left|\left(P_{\varphi_j} - p,q - Q_{\varphi_j}\right)\right| < \left\|\varphi_j - \Id\right\|_{C^0} \leq T $ and $ \mathcal{Q} $ is a diagonalized non-degenerate quadratic form, which means that for $ \left(q_N,p_0,\xi\right)\in\BB_{R_b}\times\BB_{R_f} $
		\begin{equation*}
			\left|\nabla S\left(q_N,p_0,\xi\right)\right| \leq \frac{NTR}{M} + CTN^{\frac{3}{2}} + 2R_f.
		\end{equation*}
		We bound $ M $ and $ R_f $ by
		\begin{equation} \label{eq:bndonradii}
			\begin{split}
				2\sqrt{2}T\left(N-1\right)\leq M &= \sqrt{2}T\left(\left(N-1\right)+\sum_{j=1}^{N-1}\sqrt{j}\right) \leq C\cdot TN^{\frac{3}{2}} \\
				R_f^- &= \frac{NTR}{M} + M \leq C\cdot TN^{\frac{3}{2}} \implies R_f^+ = R_f^- + M \leq C\cdot TN^{\frac{3}{2}} \\
				R_f &= \sqrt{\left(R_f^+\right)^2-\sum_{j=1}^N\min S_j} \leq \sqrt{\left(R_f^+\right)^2+NTR} \leq C\cdot TN^{\frac{3}{2}},
			\end{split}
		\end{equation}
		and plugging in these bounds gives
		\begin{equation*}
			\left|\nabla S\left(q_N,p_0,\xi\right)\right| \leq \frac{R}{C} + CTN^{\frac{3}{2}} + 2CTN^{\frac{3}{2}} \leq CTN^{\frac{3}{2}}
		\end{equation*}
		which proves the claim.
	\end{proof}
	\begin{remark}
		For completeness, we state that calculating the constants, $ C $ may be given by
		\begin{equation} \label{eq:CofR}
			C\left(R\right) = \frac{R}{2}+\sqrt{3} + 2^{\nicefrac{3}{4}}\sqrt{5+\left(1+\sqrt{2}\right)R}.
		\end{equation}
	\end{remark}
	Finally, $ r $ is given by \Cref{lemma:cellcmplxhomot} as $ \frac{\sqrt{2nN}}{m} $ so we have
	\begin{equation*} 
		\dbot\left(\mathcal{B},\mathcal{B}\left(\varphi\right)\right)\leq l\left(\frac{\sqrt{2nN}}{m}+r\right) \leq CTN^\frac{3}{2}\left(\frac{\sqrt{2nN}}{m}+\frac{\sqrt{2nN}}{m}\right) \leq C\frac{\sqrt{n}TN^2}{m}.
	\end{equation*}
	proving the desired bottleneck distance bound.
	
	\subsection{Complexity analysis}
	We move on to analyze the time complexity of the suggested algorithm. We offer a rather basic complexity analysis which bounds the count of operations needed for each step of the algorithm.
	
	Starting from the last step of the algorithm, reduction of the boundary matrix $ \partial $ in \textbf{step 3} has a worst case running time of $ \mathcal{C}^3 $, where $ \mathcal{C} $ is the number of cells in $ K\setminus L $ which corresponds to the number of columns in the matrix $ \partial $ (see \cite{Phat17}). We estimate the number of cells in the following claim:
	\begin{claim}
		The number of cells in the complex $ K\setminus L $, denoted by $ \mathcal{C} $, satisfies
		\begin{equation*}
			\mathcal{C} \leq C\cdot m^{4n}T^{4n}N^{7n}4^{nN}.
		\end{equation*}
	\end{claim}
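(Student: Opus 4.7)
The plan is to bound $\mathcal{C}$ from above by the product $|X|\cdot|Y|$ using the product structure of $K$, estimate each factor by a lattice-point count against the volume of the enclosing ball, and then substitute the bounds on the quadratic radii $R_b$ and $R_f$ from \eqref{eq:bndonradii}, collecting exponents of $m$, $T$, and $N$.

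First I would observe that since $K=X/\partial X\times Y$ is a product cell complex, its cells are precisely products of cells from the two factors, and $L=X/\partial X\times Y_0$ restricts only the second factor; hence $\mathcal{C}=|X/\partial X|\cdot|Y\setminus Y_0|\leq|X|\cdot|Y|$. For the vertex counts, since $X^0\subset\BB_{R_b+\sqrt{2n}/m}\cap\frac{1}{m}\ZZ^{2n}$ and $Y^0\subset\BB_{R_f+\sqrt{2n(N-1)}/m}\cap\frac{1}{m}\ZZ^{2n(N-1)}$, a standard volume comparison yields $|X^0|\leq C_n(mR_b)^{2n}$ and $|Y^0|\leq C_{n,N}(mR_f)^{2n(N-1)}$; the latter constant includes the volume of a unit ball in $\RR^{2n(N-1)}$, which in fact decreases with the dimension and so may be absorbed into a constant depending only on $n$.

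Next, in a cubical complex on $\frac{1}{m}\ZZ^k$ every cell is uniquely determined by its minimal-corner vertex together with a subset of the $k$ coordinate axes along which the cell extends, giving at most $2^k$ cells per vertex. Therefore $|X|\leq 2^{2n}|X^0|$ and $|Y|\leq 2^{2n(N-1)}|Y^0|$, and multiplying gives
\[
\mathcal{C}\;\leq\;C_n\cdot 4^{nN}\cdot (mR_b)^{2n}(mR_f)^{2n(N-1)}.
\]
Substituting $R_b\leq C(R)\sqrt{n}\,TN^2$ and $R_f\leq C(R)\,TN^{3/2}$ from \eqref{eq:bndonradii} and expanding yields an expression whose exponents of $T$ and $N$ accumulate from the factors $R_b^{2n}$ and $R_f^{2n(N-1)}$; collecting these and absorbing the remaining dimension-dependent constants into a single $C$ produces the claimed bound. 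The main obstacle is purely the bookkeeping of constants and exponents—verifying that the powers of $m$, $T$ and $N$ combine as stated once the radii are expanded—there is no conceptual difficulty beyond unwinding the product structure and plugging in the previously derived radius bounds.
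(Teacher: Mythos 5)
Your proposal is correct and takes essentially the same route as the paper's proof: vertex counts by volume comparison against the enclosing balls, the observation that each vertex in a full cubical complex on $\frac{1}{m}\ZZ^k$ spawns at most $2^k$ cells (the paper arrives at the same $4^{nN}$ factor by summing $\binom{2n}{k}\binom{2n(N-1)}{j-k}$ and invoking Vandermonde), and substitution of the radius bounds $R_b\leq C\sqrt{n}TN^2$, $R_f\leq CTN^{3/2}$ from \eqref{eq:bndonradii}. One thing worth flagging: what both your derivation and the paper's actually produce is the bound in \eqref{eq:numofcells}, namely $\mathcal{C}\leq C\cdot m^{2nN}T^{2nN}N^{n}\bigl(\sqrt[3]{4}\,N\bigr)^{3nN} = C\cdot m^{2nN}T^{2nN}N^{n(1+3N)}4^{nN}$, whereas the exponents displayed in the claim statement ($m^{4n}T^{4n}N^{7n}$) agree with this only at $N=2$; the claim statement appears to carry a typo, and your proof correctly reproduces the paper's own derived estimate rather than the misstated one.
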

	\begin{proof}
		In our notations $ \mathcal{C} $ is given by
		\begin{equation*}
			\mathcal{C} = \sum_{j=0}^{2nN}\sum_{k=0}^j\left|\overline{\mathcal{B}}_k\right|\cdot\left|\overline{\mathcal{F}}_{j-k}\right|.
		\end{equation*}
		Note that $ \left|\mathcal{B}_0\right| $ and $ \left|\mathcal{F}_0\right| $ is the number of points in $ X^0= \frac{1}{m}\ZZ^{2n}\cap \BB_{R_b+\frac{\sqrt{2n}}{m}} $ and $ Y^0 = \frac{1}{m}\ZZ^{2n\left(N-1\right)}\cap \BB_{R_f+\frac{\sqrt{2n\left(N-1\right)}}{m}} $ respectively and using \Cref{eq:bndonradii} we see the radius $ R_b $ satisfies
		\begin{equation*}
			R_b \leq C\cdot \sqrt{n}TN^2.
		\end{equation*}
		The number of integer points inside a $ d $-dimensional ball is estimated by the ball's volume, so using the classical estimate for the volume of $ \BB_r\subset\RR^d $ given by $ V_d\left(r\right) \sim \frac{1}{\sqrt{d\pi}}\left(\frac{2\pi e}{d}\right)^{\frac{d}{2}}r^d  $ we have
		\begin{equation*}
			\begin{split}
				&\left|\mathcal{B}_0\right| = \left|\ZZ^{2n}\cap \BB_{mR_b +\sqrt{2n}}\right| \leq C\cdot \left(2n\right)^{-n}\left(mR_b+\sqrt{2n}\right)^{2n} \leq C\cdot m^{2n}T^{2n}N^{4n} \\
				&\begin{split}
					\left|\mathcal{F}_0\right| = \left|\ZZ^{2n\left(N-1\right)}\cap \BB_{mR_f + \sqrt{2n\left(N-1\right)}}\right| &\leq C\cdot \left(2n\left(N-1\right)\right)^{-n\left(N-1\right)}\left(mR_f+\sqrt{2n\left(N-1\right)}\right)^{2n\left(N-1\right)}\\
					&\leq C\cdot m^{2n\left(N-1\right)}T^{2n\left(N-1\right)}N^{3n\left(N-1\right)}.
				\end{split}
			\end{split}
		\end{equation*}
		Next, the number of $ k $-cells in the full cubical complex generated by the vertices in $ X^0 $ (or $ Y^0 $) may be estimated by considering each $ k $-cell as a vertex in $ X^0 $ (or $ Y^0 $) together with a choice of $ k $ (unsigned) directions out of $ 2n $ (or $ 2n\left(N-1\right) $) possible choices, some of these cells might not be contained inside $ \BB_{R_b+\frac{\sqrt{2n}}{m}} $ (or $ \BB_{R_f+\frac{\sqrt{2n\left(N-1\right)}}{m}} $) and so wont be $ k $-cells in $ X $ (or $ Y $), but we get a close upper bound by
		\begin{equation*}
			\left|\mathcal{B}_k\right| \leq \left|\mathcal{B}_0\right|\binom{2n}{k} \qquad \left|\mathcal{F}_k\right| \leq \left|\mathcal{F}_0\right|\binom{2n\left(N-1\right)}{k}.
		\end{equation*}
		The sets $ \overline{\mathcal{B}}_k $ are then the indices of $ k $-cells in the quotient $ X/\partial X $ which gives the same asymptotic bound on $ \left|\overline{\mathcal{B}}_k\right| $ as $ \left|\mathcal{B}_k\right| $ and the sets $ \overline{\mathcal{F}_k} = \mathcal{F}_k\setminus\widetilde{\mathcal{F}}_k $ are the indices of all $ k $-cells in $ Y\setminus Y_0 $ where $ Y_0\subset\partial Y $, therefore we also get the same asymptotic bound for $ \left|\overline{\mathcal{F}}_k\right| $ as $ \left|\mathcal{F}_k\right| $. We conclude that the number of cells in $ K\setminus L $ satisfies
		\begin{equation} \label{eq:numofcells}
			\mathcal{C} = \left|\overline{\mathcal{B}}_0\right|\left|\overline{\mathcal{F}}_0\right|\sum_{j=0}^{2nN}\sum_{k=0}^j\binom{2n}{k}\binom{2n\left(N-1\right)}{j-k} \leq C\cdot m^{2nN}T^{2nN}N^{n}\left(\sqrt[3]{4}N\right)^{3nN},
		\end{equation}
		which proves the claim.
	\end{proof}
	Thus, the number of operations needed for \textbf{step 3} is of order $ \mathcal{O}\left(m^{6nN}T^{6nN}N^{3n}64^{N}N^{9nN}\right) $, which as we shall explain next, is the dominant term in the complexity of the algorithm.
	
	In \textbf{step 2}, sampling the generating function $ S $ on all cells recursively requires $ \mathcal{O}\left(\mathcal{C}\right) $ operations  since we need to iterate over all $ 0<j $-cells and maximize the value over the boundary of each cell, i.e we need (an upper bound of) $ 2j $ operations for each cell which gives
	\begin{equation*}
		\sum_{j=1}^{2nN}\left(\sum_{k=0}^j\left|\overline{\mathcal{B}}_k\right|\cdot\left|\overline{\mathcal{F}}_{j-k}\right|\right)2j = \left|\overline{\mathcal{B}}_0\right|\left|\overline{\mathcal{F}}_0\right|2nN2^{2nN} \leq C\cdot m^{2nN}T^{2nN}N^n\left(\sqrt[3]{4}N\right)^{3nN} = \mathcal{O}\left(\mathcal{C}\right).
	\end{equation*}\
	Then, sorting the columns of the boundary matrix $ \partial $ requires $ \mathcal{C}\log\mathcal{C} $ operations (via typical sorting algorithms), so we see that the complexity of \textbf{step 3} indeed dominates that of \textbf{step 2}.
	
	Finally, we estimate the number of operations required in \textbf{step 1} in order to construct the boundary matrix $ \partial $:
	\begin{enumerate}
		\item $ \sim\left(2mR_b+2\sqrt{2n}\right)^{2n} $ operations to determine the vertices inside $ X^0 $ out of all possible vertices in $ \frac{1}{m}\ZZ^{2n}\cap\left[R_b-\frac{\sqrt{2n}}{m},R_b+\frac{\sqrt{2n}}{m}\right]^{2n} $, then another
		\begin{equation*}
			\sum_{k=0}^{2n}\sum_{\alpha\in\mathcal{B}_k}2k \leq 2\left|\mathcal{B}_0\right|\sum_{k=0}^{2n}k\binom{2n}{k} = \left|\mathcal{B}_0\right|n2^{2n+1}
		\end{equation*}
		operation to construct the boundary matrices $ \partial_k^X $ since each $ k $-cell has $ 2k $ boundary terms. So, computation of all $ \partial_k^X $ requires $ \mathcal{O}\left(m^{2n}T^{2n}N^{4n}\right) $ operations.
		
		\item The construction of quotient matrices $ q_k $ require $ \left|\mathcal{B}_k\right| $ operations each to determine weather each cell is contained in $ \partial X $ or not, then the matrix $ \overline{\partial}_k^X = q_{k-1}\partial_k^X $ is computed by matrix multiplication which requires $ \mathcal{O}\left(\left|\mathcal{B}_k\right|\left|\mathcal{B}_{k-1}\right|^2\right) $ operations (using naive matrix multiplication algorithms), since the matrix $ \partial_k $ has $ \left|\mathcal{B}_k\right| $ columns and $ \left|\mathcal{B}_{k-1}\right| $ rows and the number of rows in $ q_{k-1} $ equals the number of cells in $ X\setminus\partial X $, which is bounded by $ \left|\mathcal{B}_{k-1}\right| $. So, computation of all $ \overline{\partial}_k^X $ requires $ \mathcal{O}\left(m^{6n}T^{6n}N^{12n}\right) $ operations.
		
		\item Following the same argument as in the first phase, determining the vertices in $ Y^0\subset\BB_{R_b+\frac{\sqrt{2n\left(N-1\right)}}{m}} $ requires $ \sim\left(2mR_f+2\sqrt{2n\left(N-1\right)}\right)^{2n\left(N-1\right)} $ operations, then another
		\begin{equation*}
			\sum_{k=0}^{2n\left(N-1\right)}\sum_{\alpha\in\mathcal{F}_k}2k \leq\left|\mathcal{F}_0\right|nN4^{nN}
		\end{equation*}
		operations to construct the boundary matrices $ \partial_k^Y $. So, computation of al matrices $ \partial_k^Y $ requires $ \mathcal{O}\left(m^{2n\left(N-1\right)}T^{2n\left(N-1\right)}\left(\sqrt[3]{4}N\right)^{3nN}\right) $ operations.
		
		\item The restriction $ \widetilde{\partial}_k^Y $ of $ \partial_k^Y $ to the cells contained in $ Y\setminus Y_0 $ requires $ \mathcal{O}\left(\sum_{k=0}^{2n\left(N-1\right)}\left|\mathcal{F}_k\right|\right) $ operations to determine which cells are in $ Y\setminus Y_0 $ and eliminate these columns and rows from each $ \partial_k^Y $. So, computation of all $ \widetilde{\partial}_k^Y $ requires $ \mathcal{O}\left(m^{2n\left(N-1\right)}T^{2n\left(N-1\right)}\left(\sqrt[3]{4}N\right)^{3n\left(N-1\right)}\right) $ operations.
		
		\item The boundary matrices $ \partial_j $ are computed by blocks where each block is given by $ \Id_{\overline{\mathcal{B}}_k}\otimes\widetilde{\partial}_{j-k}^Y $ and $ \overline{\partial}_{k+1}^X\otimes\Id_{\overline{\mathcal{F}}_j-k-1} $ where $ k=0\dots,j-1 $, so using basic complexity estimates for the Kronecker product of matrices (given by $ \mathcal{O}\left(n_1m_1n_2m_2\right) $ for two matrices of size $ n_1\times m_1 $ and $ n_2\times m_2 $) we get an upper bound on the number of needed operations by
		\begin{equation*}
			\left|\mathcal{B}_0\right|^2\left|\mathcal{F}_0\right|^2\sum_{k=0}^{j-1}\binom{2n}{k}^2\binom{2n\left(N-1\right)}{j-k}\binom{2n\left(N-1\right)}{j-k-1} + \binom{2n}{k+1}\binom{2n}{k}\binom{2n\left(N-1\right)}{j-k-1}^2,
		\end{equation*}
		which means we may bound (rather wastefully, but as we see it is still not the main term in the overall complexity) the number of operations needed to compute all $ \partial_j $ by
		\begin{equation*}
			C\cdot\left|\mathcal{B}_0\right|^2\left|\mathcal{F}_0\right|^2\sum_{j=0}^{2nN}\binom{2nN}{j}^2 = C\cdot m^{4nN}T^{4nN}N^{2n}N^{6nN}\binom{4nN}{2nN}.
		\end{equation*}
		So, we conclude that computation of $ \partial $ requires $ \mathcal{O}\left(m^{4nN}T^{4nN}N^{2n}\left(\sqrt[3]{4}N\right)^{6nN}\right) $ operations.
	\end{enumerate}
	We add up all three steps of the algorithm to see the main time complexity term is given in \textbf{step 3}, as the boundary matrix reduction, which has time complexity bound of
	\begin{equation*}
		\mathcal{O}\left(m^{6nN}T^{6nN}N^{3n}64^NN^{9nN}\right) = \mathcal{O}\left(m^{6nN}T^{6nN}N^{10nN}\right),
	\end{equation*}
	proving the desired complexity bound.
	
	\begin{remark}
		As the complexity is super exponential in the number $ N $ of $C^1$-small diffemorphisms in the composition, one needs to make vast improvements to the suggested implementation for large $ N $. We believe there is some room for that as the number of cells in the cell complex used by our implementation is undoubtedly larger than needed, and one possible direction may be to reduce the number of cells significantly in such a way that the topology of the sub level sets would not change, for example by taking different sizes of cells depending on the features of the generating functions (e.g smaller cells near critical points and larger cells away from them), but this is outside the scope of this paper.
		
		Moreover, our current implementation has heavy memory requirements that should be traded for prolonged running time, this will allow us to compute more examples though we believe that without further improvements this issue alone will not make a significant change on the type of examples we could compute.
	\end{remark}
	
	\section{Implementation for 2-dimensional radial functions} \label{sec:radialprofile}
	The suggested algorithm in the proof of \Cref{thm:mainthm} assumes the input is given as samples of functions $ S_j:\RRnn\to\RR $, each generating some $ \varphi_j\in\mathcal{H}_{R,T} $. While this result allows us to view the GF-Barcode as an invariant which can be numerically approximated, it is still not quite useful if one wishes to approximate GF-Barcodes of specific Hamiltonian diffeomorphisms since finding and computing samples of such generating functions is not explicit.
	
	In this section we look at a certain class of Hamiltonian diffeomorphisms of $ \RR^2 $, namely compositions of time-$ 1 $ maps of Hamiltonian flows generated by autonomouss radial Hamiltonian functions given as a radial profile and a center point, i.e
	\begin{equation*}
		\mathcal{R}_{T,T^\prime,R}^N = \left\{\varphi\in\Ham_c\left(\RR^2,\omega_{0}\right);\varphi = \varphi_N\circ\dots\circ\varphi_1,\forall j:\varphi_j\in\mathcal{R}_{T,T^\prime,R}\right\},
	\end{equation*}
	where
	\begin{equation*}
		\mathcal{R}_{T,T^\prime,R} = \left\{\varphi_H^1;H\left(x\right) = h\left(\frac{\left|x-c\right|^2}{2}\right),c\in\RR^2, h\in C^\infty_c\left(\RR_{\geq0}\right),\left|h^\prime\right|\leq T,\left|h^{\prime\prime}\right|\leq T^\prime,\supp\left(\varphi_H^1\right)\subset\BB_R\right\}.
	\end{equation*}

	For this class of Hamiltonian diffeomorphisms we offer an implementation that numerically approximate generating functions without auxiliary variables and then uses the suggested algorithm from \Cref{thm:mainthm} to approximate their GF-Barcode.
	
	\subsection{Hamiltonian diffeomorphism from radial profile}
	\paragraph[radial profiles]{}
	Let $ h\in C^\infty_c\left(\RR_{\geq0}\right) $ such that $ h $ is supported on $ \left[0,r\right] $, $ \left|h^\prime\right|\leq T,\left|h^{\prime\prime}\right|\leq T^\prime $ and let $ c\in\RR^2 $, define the Hamiltonian function $ H\left(x\right) = h\left(\nicefrac{\left|x-c\right|^2}{2}\right) $, then the Hamiltonian flow generated by $ H $ is given by
	\begin{equation} \label{eq:radialflow}
		\varphi^t_H\left(x\right) = R_{h^\prime\left(\nicefrac{\left\|x-c\right\|^2}{2}\right)t}\left(x-c\right) + c,
	\end{equation}
	where $ R_\theta:\RR^2\to\RR^2 $ denotes the rotation map by angle $ \theta $. We can bound the $ C^0 $ and $ C^1 $ distance of $ \varphi=\varphi_H^1 $ from the identity using $ T,T^\prime,r $ and $ \left|c\right| $ by
	\begin{equation} \label{eq:distfromIDbounds}
		\begin{split}
			\left\|\varphi-\Id\right\|_{C^0} &= \sup_{x\in\RR^2}\left|\varphi\left(x\right)-x\right| = 2\left|x-c\right|\sin\left(\nicefrac{1}{2}\cdot h^\prime\left(\nicefrac{\left|x-c\right|^2}{2}\right)\right) \leq \sqrt{2r}T \\
			\left\|D\varphi-\Id\right\|_{\text{op}} &= \sup_{x\in\RR^2}\sup_{v\in\SS^1}\left|D_x\varphi\left(v\right)-v\right| \leq \sqrt{2r}\left(\left|c\right|+\sqrt{2r}\right) T^\prime + T,
		\end{split}
	\end{equation}
	and in case $ \varphi $ has a generating function $ S:\RR^2\to\RR $ we can use the explicit expression of $ \varphi_H^t $ to approximate $ S $.
	
	\paragraph[Small ham to gen' func']{}
	Recall that if $ \varphi $ is sufficiently close to the identity in the $ C^1 $ topology then it has a generating function without auxiliary variables, in particular $ \left\|D\varphi - \Id\right\|_{\text{op}} $ is small and so the function $ q\mapsto Q_\varphi\left(q,p\right) $ is invertible for every $ p\in\RRn $ (since $ \frac{\partial Q_\varphi}{\partial q} $ is non-singular), we denote its inverse by $ q_\varphi\left(Q,p\right) $ and we get the relation $ Q_\varphi\left(q_\varphi\left(Q,p\right),p\right) = Q $ which allows us to use the coordinates $ \left(Q,p\right) $ instead of $ \left(q,p\right) $ - the existence of generating function for such $ \varphi $ follows from this observation and is described in the following proposition (\cite[lemma 9.2.1]{McDSal17}).
	\begin{proposition} \label{prop:smallhamtogenfunc}
		Let $ \varphi\in\Ham\left(\RRnn,\omega_0\right) $ such that $ \left\|D_x\varphi - \Id\right\|_{\text{op}} \leq \frac{1}{2} $ for all $ x\in\RRnn $, then there exists a function $ S:\RRnn\to\RR $ such that $ L_\varphi = \Gamma_{dS} $. 
	\end{proposition}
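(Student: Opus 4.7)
The strategy is to express $L_\varphi \subset T^*\RRnn$ explicitly, show it is the graph of a (necessarily smooth) $1$-form on the base $\RRnn$, observe that this form is closed because $L_\varphi$ is Lagrangian, and finally invoke the Poincar\'e lemma on the contractible manifold $\RRnn$ to produce $S$ with $dS = \alpha$. Concretely, writing $\varphi(q,p) = (Q_\varphi(q,p), P_\varphi(q,p))$, the map $\tau$ of Section~\ref{sec:hamtolagsbmnfd} gives
\[
L_\varphi = \bigl\{\,\bigl((Q_\varphi(q,p),\, p),\; (P_\varphi(q,p)-p,\; q - Q_\varphi(q,p))\bigr) : (q,p)\in\RRnn \,\bigr\}.
\]
So the key point is to identify $L_\varphi$ with the graph of a $1$-form over the base variables $(Q,p)$, which requires inverting the assignment $(q,p)\mapsto(Q_\varphi(q,p),p)$.

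\textbf{Step 1 (main obstacle): global diffeomorphism of the projection.} Let $F:\RRnn\to\RRnn$ be $F(q,p) = (Q_\varphi(q,p), p)$. Its Jacobian has block form
\[
DF = \begin{pmatrix} \partial_q Q_\varphi & \partial_p Q_\varphi \\ 0 & I_n \end{pmatrix},
\]
and since $\partial_q Q_\varphi$ is obtained from $D\varphi$ by restriction/projection onto coordinate subspaces, the hypothesis $\|D\varphi - \Id\|_{\mathrm{op}}\leq \tfrac12$ forces $\|\partial_q Q_\varphi - I_n\|_{\mathrm{op}}\leq \tfrac12$, so $\partial_q Q_\varphi$ is everywhere invertible and $F$ is a local diffeomorphism. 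To globalize, I would fix $p$ and consider $F_p:\RRn\to\RRn$, $q\mapsto Q_\varphi(q,p)$. The bound $\|DF_p - I_n\|_{\mathrm{op}}\leq \tfrac12$ together with the mean value theorem gives that $F_p-\Id$ is $(1/2)$-Lipschitz, which yields both injectivity of $F_p$ (from $|F_p(q)-F_p(q')|\geq |q-q'| - \tfrac12|q-q'|$) and properness (since $|F_p(q)|\to\infty$ as $|q|\to\infty$). Combined with the local diffeomorphism property, this makes $F_p$ a global diffeomorphism of $\RRn$; letting $p$ vary smoothly shows $F$ itself is a diffeomorphism of $\RRnn$. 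This globalization is the only place where the $C^1$-closeness must be used in a non-local way.

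\textbf{Step 2: the $1$-form.} Writing $q = q_\varphi(Q,p)$ for the inverse of $F$ in the $q$-direction and pushing forward, we see that $L_\varphi$ is exactly the graph of the smooth $1$-form
\[
\alpha(Q,p) = \bigl(\,P_\varphi(q_\varphi(Q,p), p) - p,\; q_\varphi(Q,p) - Q\,\bigr) \in T^*_{(Q,p)}\RRnn.
\]

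\textbf{Step 3: closedness and exactness.} Since $\varphi$ is symplectic, $\Gamma_\varphi$ is Lagrangian in $(\overline{\RRnn}\times\RRnn,\,(-\omega_0)\oplus\omega_0)$, and because $\tau$ is a symplectomorphism onto $T^*\RRnn$ (with its canonical symplectic structure $-d\lambda$), the submanifold $L_\varphi = \tau(\Gamma_\varphi)$ is Lagrangian in $T^*\RRnn$. A standard fact from symplectic geometry (the pullback of $\lambda$ by a section equals the section viewed as a $1$-form) then implies that the graph of $\alpha$ is Lagrangian if and only if $d\alpha = 0$. Since $\RRnn$ is contractible, the Poincar\'e lemma gives a smooth primitive $S:\RRnn\to\RR$ with $dS = \alpha$, which is the desired generating function; by construction $L_\varphi = \Gamma_{dS}$. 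No step beyond Step~1 presents any real difficulty, so the content of the proposition is really the global invertibility statement.
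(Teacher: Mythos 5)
The paper does not prove this proposition; it simply cites \cite[lemma 9.2.1]{McDSal17} and moves on, so there is no in-text proof to compare against. Your argument is correct and self-contained. The one genuinely nontrivial point is Step~1, and your Hadamard-style globalization (local diffeomorphism from $\|\partial_q Q_\varphi - I_n\|_{\mathrm{op}}\le \tfrac12$, injectivity and properness from the Lipschitz bound on $F_p-\Id$) is exactly what is needed to upgrade the local graph-over-$(Q,p)$ statement, which is what references of this type typically prove, to the global statement asserted here; the observation that the operator norm of a block is bounded by that of the full matrix cleanly justifies passing the $C^1$-bound to the relevant sub-block. Steps~2 and~3 (Lagrangian graph $\Rightarrow$ closed $1$-form $\Rightarrow$ exact by the Poincar\'e lemma on the contractible base) are standard and correctly executed.
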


	In particular, given a such profile function $ h $ described earlier, if $\sqrt{2r}\left(\left|c\right|+\sqrt{2r}\right) T^\prime + T\leq\frac{1}{2} $ then the corresponding $ \varphi $ indeed has a generating function $ S:\RR^2\to\RR $.

	\subsection{Approximation of generating functions}
	\paragraph[Hamilton Jacobi for small gen' func']{}
	This special case of generating functions for $ C^1 $ small Hamiltonian diffeomorphisms also satisfies the Hamilton-Jacobi equation - this is a classical result that appears in many places (see \cite[prop' 3.1A]{BialPol94} for exmaple) and we state it here in order to obtain an expression for such generating functions.
	\begin{proposition}[Hamilton-Jacobi] \label{prop:H-J}
		Let $ \varphi^t\in\Ham_c\left(\RRnn,\omega_0\right) $ be a Hamiltonian flow starting at the identity and generated by the Hamiltonian function $ H_t:\RRnn\to\RR $. If $ \left\{S_t\right\} $ is a smooth family of generating functions with $ S_0 = 0 $ such that $ S_t:\RRnn\to\RR $ generates $ \varphi^t $ (in the sense of \Cref{def:smallgenfunc}), then $ S_t $ satisfies Hamilton-Jacobi equation
		\begin{equation*}
			\frac{\diff S_t}{\diff t}\left(Q,p\right) = H_t\left(Q,\frac{\partial S_t}{\partial Q}\left(Q,p\right) + p\right).
		\end{equation*}
	\end{proposition}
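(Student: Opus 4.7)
The strategy is to differentiate the two defining relations of $S_t$ along a Hamiltonian trajectory, show that both sides of the target identity share the same $Q$- and $p$-derivatives, and finally pin down the integration constant using the compact support of $\varphi^t$ at infinity.

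From \Cref{def:smallgenfunc} and the description of $L_\varphi$ in \Cref{sec:hamtolagsbmnfd}, the generating function $S_t(Q, p)$ is characterized by
\[
    \partial_Q S_t(Q, p) = P - p, \qquad \partial_p S_t(Q, p) = q - Q,
\]
whenever $(Q, P) = \varphi^t(q, p)$. First I would fix an initial point $(q, p)$, write $(Q_t, P_t) = \varphi^t(q, p)$, and differentiate each relation in $t$ along this trajectory. Applying the chain rule to $\partial_Q S_t(Q_t, p) = P_t - p$ yields
\[
    \partial_t \partial_Q S_t(Q_t, p) \,=\, \dot P_t \,-\, \partial_Q^2 S_t(Q_t, p)\,\dot Q_t,
\]
and inserting Hamilton's equations for $\dot Q_t, \dot P_t$ together with the identity $P_t = \partial_Q S_t(Q_t, p) + p$ matches, term by term, the expansion of $\partial_Q\bigl[H_t(Q,\partial_Q S_t(Q, p) + p)\bigr]$ evaluated at $(Q, p) = (Q_t, p)$. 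The analogous computation starting from $\partial_p S_t(Q_t, p) = q - Q_t$, again via Hamilton's equations, handles the $p$-partial in the same way.

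Since $\varphi^t$ is a diffeomorphism of $\RRnn$, the map $(q, p) \mapsto (Q_t, p)$ is onto for each $t$, so these pointwise identities hold on all of $\RRnn$ and upgrade to $\partial_Q F_t \equiv 0$ and $\partial_p F_t \equiv 0$ for
\[
    F_t(Q, p) := \frac{\diff S_t}{\diff t}(Q, p) \,-\, H_t\bigl(Q, \partial_Q S_t(Q, p) + p\bigr).
\]
Hence $F_t$ is spatially constant. To fix this constant I would invoke the compact support of $\varphi \in \Ham_c(\RRnn,\omega_0)$: outside a fixed compact set $\varphi^t = \Id$, forcing $\partial_Q S_t = \partial_p S_t = 0$, so $S_t$ is locally constant at infinity; connectedness of the complement of a compact set together with the normalization $S_0 = 0$ and smoothness of the family in $t$ pins this constant to $0$ for all $t$, giving $\partial_t S_t \equiv 0$ at infinity. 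Since $H_t$ also vanishes outside its support, $F_t \equiv 0$ at infinity, and therefore everywhere.

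The main obstacle is really bookkeeping: cleanly separating the free coordinates $(Q, p)$ of $S_t$ from the initial data $(q, p)$ of the flow, and carefully applying the chain rule along the curve $t \mapsto (Q_t, p)$. Once the notation is set, the structural observation that $\partial_Q^2 S_t$ appears symmetrically on both sides of the $Q$-differentiated identity makes the term-by-term matching essentially mechanical, and the sign conventions used for Hamilton's equations are exactly those encoded in the definition of $\tau$ in \Cref{sec:hamtolagsbmnfd}, so internal consistency with the stated form of the Hamilton--Jacobi equation is automatic.
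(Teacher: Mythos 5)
The paper does not prove \Cref{prop:H-J}; it cites it as classical (to Bialy--Polterovich), so there is no house proof to compare against. Your strategy --- differentiate the generating-function relations $\partial_Q S_t = P - p$ and $\partial_p S_t = q - Q$ in $t$, show that $F_t := \partial_t S_t - H_t(Q,\partial_Q S_t + p)$ has vanishing $Q$- and $p$-gradients, and then kill the resulting function of $t$ --- is a standard and workable route. The term-by-term matching you assert does hold, but only with the sign convention $\dot Q = -\partial_P H_t$, $\dot P = \partial_Q H_t$ (equivalently $\iota_{X_H}\omega_0 = dH$); with the opposite, more common physics convention $\dot Q = \partial_P H_t$, $\dot P = -\partial_Q H_t$, your computation yields $\partial_Q F_t = 2\bigl(\partial_Q^2 S_t\,\dot Q_t - \dot P_t\bigr)$ rather than $0$, and the Hamilton--Jacobi equation would acquire a minus sign. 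You declare the convention compatibility ``automatic'' because of how $\tau$ is defined, but that is precisely the step that deserves a line of verification; as written the reader cannot tell whether you are getting $+H_t$ or $-H_t$.

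The more substantive gap is the constant-pinning step. From $\partial_Q F_t \equiv \partial_p F_t \equiv 0$ you correctly get $F_t(Q,p) = c(t)$, but your claim that ``connectedness of the complement of a compact set together with the normalization $S_0 = 0$ and smoothness of the family in $t$ pins this constant to $0$'' does not follow. If $\{S_t\}$ is any smooth family of generating functions with $S_0 = 0$, then so is $\{S_t + a(t)\}$ for any smooth $a$ with $a(0) = 0$, and this replacement shifts $c(t)$ by $a'(t)$; smoothness in $t$ and the single condition $S_0 = 0$ do not force $S_t$ to vanish at infinity for $t > 0$. What actually kills $c(t)$ is the additional normalization, implicit in the paper's usage and in \Cref{Lemma:approxgf}, that each $S_t$ vanishes outside the (fixed, compact) support: then $\partial_t S_t = 0$ and $H_t = 0$ near infinity force $c(t) \equiv 0$. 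You should invoke that normalization explicitly rather than attribute the conclusion to smoothness; indeed, the proposition as stated in the paper shares this imprecision, and spelling it out would strengthen the argument.
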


	\paragraph[Approximating gf]{}
	Thus, if $ \varphi $ is the time-$ 1 $ map of a Hamiltonian flow $ \varphi^t_H $ generated by $ H_t:I\times\RRnn\to\RR $ then its generating function $ S $ is given by the following.
	\begin{equation} \label{eq:genfuncexpression}
		S\left(Q,p\right) = \int_0^1 H_t\left(Q,P^t\left(q_{\varphi^t}\left(Q,p\right),p\right)\right)\diff t,
	\end{equation}
	where $ \varphi^t_H\left(q,p\right) = \left(Q^t,P^t\right) $ and $ q_{\varphi^t}\left(Q,p\right) $ satisfies $ Q^t\left(q_{\varphi^t}\left(Q,p\right),p\right) = Q $.
	
	Note that for each $ p\in\RRn $ the function $ q_{\varphi^t}\left(\cdot,p\right) $ is the inverse of $ Q^t\left(\cdot,p\right) $ and so
	\begin{equation*}
		\left|q_{\varphi^t}\left(Q,p\right) - Q\right| \leq \left\|\varphi^t-\Id\right\|_{C^0}
	\end{equation*}
	which means that given $ \varphi^t $ and $ Q\in\RRn,p\in\RRn $, one can numerically solve the inverse function problem
	\begin{equation*}
		Q^t\left(\widetilde{q_{\varphi^t}},p\right) = Q,
	\end{equation*}
	by minimizing the function $ \left|Q^t\left(\widetilde{q_{\varphi^t}},p\right) - Q\right|^2 $ in a ball of radius $ \left\|\varphi^t-\Id\right\|_{C^0} $ around $ Q $ with a maximal error denoted $ E $, so we get an approximation $ \widetilde{q_{\varphi^t}}\left(Q,p\right) $ satisfying
	\begin{equation*}
		\left|Q^t\left(\widetilde{q_{\varphi^t}}\left(Q,p\right),p\right) - Q\right| \leq \sqrt{E},
	\end{equation*}
	Finally, we can formalize an approximation of $ S $ in our case by:
	\begin{lemma} \label{Lemma:approxgf}
		Let $ \varphi\in\Ham_c\left(\RRnn,\omega_{0}\right) $ such that $ \max\left\|D\varphi-\Id\right\|_{\text{op}}<1 $. Assume $ \varphi $ has a generating function $ S:\RRnn\to\RR $ and that the Hamiltonian path $ \varphi^t $ is generated by an autonomous Hamiltonian $ H:\RRnn\to\RR $, for $ m\in\NN $ denote by $ \widetilde{S} $ the function
		\begin{equation*}
			\widetilde{S}\left(Q,p\right) = \frac{1}{m}\sum_{k=1}^m H\left(Q,P^{\nicefrac{k}{m}}\left(\widetilde{q_{\varphi^{\nicefrac{k}{m}}}}\left(Q,p\right),p\right)\right),
		\end{equation*}
	where $ \varphi^t\left(q,p\right) = \left(Q^t\left(q,p\right),P^t\left(q,p\right)\right) $ and $ \widetilde{q_{\varphi^t}}\left(Q,p\right) $ satisfies $ \left|Q^t\left(\widetilde{q_{\varphi^t}}\left(Q,p\right),p\right) - Q\right| \leq \sqrt{E} $, then
	\begin{equation*}
		\left\|S-\widetilde{S}\right\|_{C^0}\leq \frac{2\max\left|\nabla H\right|^2}{1-\max\left\|D\varphi-\Id\right\|_\text{op}}\cdot\frac{1}{m} + \frac{2\max\left|\nabla H\right|}{1-\max\left\|D\varphi-\Id\right\|_{\text{op}}}\sqrt{E}.
	\end{equation*}
	\end{lemma}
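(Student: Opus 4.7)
The plan is to express the true generating function $S$ via an integral using Hamilton--Jacobi (\Cref{prop:H-J}), interpret $\widetilde S$ as a right-endpoint Riemann sum approximation of that integral, and then control the two natural sources of error separately: the Riemann-sum discretization error in $t$, and the error coming from replacing $q_{\varphi^t}$ with its approximate inverse $\widetilde{q_{\varphi^t}}$. The unifying simplification, exploited throughout, is that $H$ is autonomous, so the value of $H$ along any Hamiltonian trajectory is constant.

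First I would rewrite both quantities in a more convenient form. By \Cref{prop:H-J} and the change of variables $Q = Q^t(q_{\varphi^t}(Q,p),p)$, we have
\begin{equation*}
S(Q,p)=\int_0^1 H\!\left(Q,P^t\!\left(q_{\varphi^t}(Q,p),p\right)\right)\diff t
       =\int_0^1 H\!\left(q_{\varphi^t}(Q,p),p\right)\diff t,
\end{equation*}
where the second equality uses autonomy, since $\varphi^t$ preserves $H$ and $\varphi^t(q_{\varphi^t}(Q,p),p)=(Q,P^t(q_{\varphi^t}(Q,p),p))$. Analogously, writing $Q^t_k:=Q^{k/m}(\widetilde{q_{\varphi^{k/m}}}(Q,p),p)$ and $P^t_k:=P^{k/m}(\widetilde{q_{\varphi^{k/m}}}(Q,p),p)$, autonomy gives $H(Q^t_k,P^t_k)=H(\widetilde{q_{\varphi^{k/m}}}(Q,p),p)$, so
\begin{equation*}
\widetilde S(Q,p)=\frac{1}{m}\sum_{k=1}^m H(\widetilde{q_{\varphi^{k/m}}}(Q,p),p) + \frac{1}{m}\sum_{k=1}^m\bigl[H(Q,P^t_k)-H(Q^t_k,P^t_k)\bigr],
\end{equation*}
and the bracketed correction is controlled using $|Q-Q^t_k|\leq\sqrt E$ by $\max|\nabla H|\sqrt E$ per term.

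Next I would bound the two remaining errors against $\int_0^1 H(q_{\varphi^t}(Q,p),p)\diff t$. For the Riemann-sum error I use the standard right-endpoint estimate
\begin{equation*}
\left|\int_0^1 f(t)\diff t-\frac{1}{m}\sum_{k=1}^m f(k/m)\right|\leq \frac{1}{m}\int_0^1 |f'(s)|\diff s,
\end{equation*}
applied to $f(t):=H(q_{\varphi^t}(Q,p),p)$. Differentiating the identity $Q^t(q_{\varphi^t}(Q,p),p)=Q$ in $t$ and using Hamilton's equations $\partial_t Q^t=\partial_p H\circ\varphi^t$ yields $\dot q_{\varphi^t}=-(\partial_q Q^t)^{-1}\partial_p H(\varphi^t(\xi(t)))$, hence $|f'(t)|\leq \max|\nabla H|^2\cdot \|(\partial_q Q^t)^{-1}\|_{\mathrm{op}}$. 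For the $\widetilde q$-approximation error, applying the mean value theorem to $Q^t(\cdot,p)$ between $\widetilde{q_{\varphi^{k/m}}}$ and $q_{\varphi^{k/m}}$ gives
\begin{equation*}
|\widetilde{q_{\varphi^{k/m}}}(Q,p)-q_{\varphi^{k/m}}(Q,p)|\leq \|(\partial_q Q^{k/m})^{-1}\|_{\mathrm{op}}\cdot\sqrt E,
\end{equation*}
so the discrepancy between $\tfrac{1}{m}\sum_k H(\widetilde q_{\varphi^{k/m}},p)$ and $\tfrac{1}{m}\sum_k H(q_{\varphi^{k/m}},p)$ is at most $\max|\nabla H|\cdot\max_t\|(\partial_q Q^t)^{-1}\|_{\mathrm{op}}\cdot\sqrt E$. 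The block $\partial_q Q^t$ is the upper left block of $D\varphi^t$, so by Neumann series $\|(\partial_q Q^t)^{-1}\|_{\mathrm{op}}\leq(1-\|D\varphi^t-\Id\|_{\mathrm{op}})^{-1}\leq(1-\max\|D\varphi-\Id\|_{\mathrm{op}})^{-1}$.

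Assembling the three contributions via the triangle inequality gives
\begin{equation*}
\|S-\widetilde S\|_{C^0}\leq \frac{\max|\nabla H|^2}{m(1-\max\|D\varphi-\Id\|_{\mathrm{op}})}+\max|\nabla H|\sqrt E+\frac{\max|\nabla H|\sqrt E}{1-\max\|D\varphi-\Id\|_{\mathrm{op}}},
\end{equation*}
and since $1-\max\|D\varphi-\Id\|_{\mathrm{op}}<1$ the middle term can be absorbed, upgrading the constants to the factor of $2$ in the stated bound. The single technical point worth flagging is that I am using $\max\|D\varphi-\Id\|_{\mathrm{op}}$ as a bound on $\|D\varphi^t-\Id\|_{\mathrm{op}}$ for every intermediate time $t\in[0,1]$; this is the implicit assumption of the lemma (the path of Hamiltonian diffeomorphisms $\varphi^t$ is assumed to stay $C^1$-close to the identity, which is guaranteed in our setting by working with $C^1$-small $\varphi_j$), and it is this step that is the only real obstacle --- everything else is algebraic bookkeeping enabled by the autonomy trick.
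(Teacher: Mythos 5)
Your proof is correct, and the overall scaffolding (Hamilton--Jacobi integral representation, Neumann-series bound for $\left\|\left(\partial_q Q^t\right)^{-1}\right\|_{\text{op}}$, implicit differentiation of $Q^t\left(q_{\varphi^t}(Q,p),p\right)=Q$ in $t$, Lipschitz estimates on $H$) coincides with the paper's, but the decomposition of the error is genuinely different. The paper compares $H\left(\varphi^t\left(q_{\varphi^t}(Q,p),p\right)\right)$ to $H\left(Q,P^{k/m}\left(\widetilde{q_{\varphi^{k/m}}}(Q,p),p\right)\right)$ in a single Lipschitz step, then splits the argument difference into a time-discretization piece ($\star\star$: $\varphi^t$ vs.\ $\varphi^{k/m}$, each applied to $q_{\varphi^\cdot}$) and an inverse-function-error piece ($\star\star\star$: $q_{\varphi^{k/m}}$ vs.\ $\widetilde{q_{\varphi^{k/m}}}$). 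You instead exploit autonomy to pull every $H$-evaluation back along the flow to the initial point, rewriting $S$ as $\int_0^1 H\left(q_{\varphi^t}(Q,p),p\right)\diff t$ and the sum as $\frac{1}{m}\sum_k H\left(\widetilde{q_{\varphi^{k/m}}}(Q,p),p\right)$ plus an explicit $\mathcal{O}(\sqrt E)$ correction, and then estimate three \emph{scalar} errors separately (Riemann-sum, $\widetilde q$-vs-$q$, and correction). This is cleaner bookkeeping with no nested Lipschitz applications, and in fact yields a sharper constant: your first term is $\frac{\max\left|\nabla H\right|^2}{m\left(1-\max\left\|D\varphi-\Id\right\|\right)}$, half the stated bound. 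The one place where you discard optimality, absorbing $\max\left|\nabla H\right|\sqrt E$ into $\frac{\max\left|\nabla H\right|\sqrt E}{1-\max\left\|D\varphi-\Id\right\|}$, is harmless and matches the coefficient $2$ claimed. Your flagged technical point, that $\max\left\|D\varphi-\Id\right\|_{\text{op}}$ must also dominate $\left\|D\varphi^t-\Id\right\|_{\text{op}}$ for intermediate $t$, is a genuine imprecision in the lemma's statement, and the paper's own proof makes exactly the same tacit use of it (for example when writing $\max\left\|D\varphi^t\right\|_{\text{op}}$ and when applying the Neumann series to $\partial_q Q^t$), so you are on equal footing there.
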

	
	\begin{proof}
		Let $ Q\in\RRn,p\in\RRn $, we use the expression of the generating function $ S $ given by \Cref{eq:genfuncexpression} to bound the distance $ \left|S\left(Q,p\right)-\widetilde{S}\left(Q,p\right)\right| $ by
		\begin{align*}
			 \left|S\left(Q,p\right)-\widetilde{S}\left(Q,p\right)\right| &= \left|\sum_{k=1}^m\int_{\frac{k-1}{m}}^{\frac{k}{m}}H\left(Q,P^t\left(q_{\varphi^t}\left(Q,p\right),p\right)\right)\diff t - \frac{1}{m}\sum_{k=1}^m H\left(Q,P^{\nicefrac{k}{m}}\left(\widetilde{q_{\varphi^{\nicefrac{k}{m}}}}\left(Q,p\right),p\right)\right)\right|\\
			 &\leq\sum_{k=1}^m\int_{\frac{k-1}{m}}^{\frac{k}{m}}\left|H\left(\varphi^t\left(q_{\varphi^t}\left(Q,p\right)\right),p\right)-H\left(Q,P^{\nicefrac{k}{m}}\left(\widetilde{q_{\varphi^{\nicefrac{k}{m}}}}\left(Q,p\right),p\right)\right)\right|\diff t \\
			 &\leq\sum_{k=1}^m\int_{\frac{k-1}{m}}^{\frac{k}{m}}\max\left|\nabla H\right|\max_{t\in\left[\frac{k-1}{m},\frac{k}{m}\right]}\underbrace{\left|\varphi^t\left(q_{\varphi^t}\left(Q,p\right),p\right)-\left(Q,P^{\nicefrac{k}{m}}\left(\widetilde{q_{\varphi^{\nicefrac{k}{m}}}}\left(Q,p\right),p\right)\right)\right|}_{\star}\diff t,
		\end{align*}
		and
		\begin{align*}
			\star&\leq \underbrace{\left|\varphi^t\left(q_{\varphi^t},p\right)-\varphi^{\nicefrac{k}{m}}\left(q_{\varphi^{\nicefrac{k}{m}}},p\right)\right|}_{\star\star} + \underbrace{\left|\varphi^{\nicefrac{k}{m}}\left(q_{\varphi^{\nicefrac{k}{m}}},p\right)-\left(Q,P^{\nicefrac{k}{m}}\left(\widetilde{q_{\varphi^{\nicefrac{k}{m}}}},p\right)\right)\right|}_{\star\star\star} \\\\
			\star\star&\leq\left|\varphi^t\left(q_{\varphi^t},p\right)-\varphi^{t}\left(q_{\varphi^{\nicefrac{k}{m}}},p\right)\right| + \left|\varphi^t\left(q_{\varphi^{\nicefrac{k}{m}}},p\right)-\varphi^{\nicefrac{k}{m}}\left(q_{\varphi^{\nicefrac{k}{m}}},p\right)\right| \\
			&\leq\max\left\|D\varphi^t\right\|_{\text{op}}\left|q_{\varphi^t}-q_{\varphi^{\nicefrac{k}{m}}}\right| + \max_{t\in\left[\frac{k-1}{m},\frac{k}{m}\right]}\left|\frac{\diff}{\diff t}\varphi^t\right|\frac{1}{m} \\
			&\leq \max\left\|D\varphi^t\right\|_{\text{op}}\max_{t\in\left[\frac{k-1}{m},\frac{k}{m}\right]}\left|\frac{\diff}{\diff t}q_{\varphi^t\left(Q,p\right)}\right|\frac{1}{m} + \max\left|\nabla H\right|\frac{1}{m}.
		\end{align*}
		Next, we differentiate $ Q^t\left(q_{\varphi^t}\left(Q,p\right),p\right) = Q $ w.r.t $ t $ to get
		\begin{equation*}
			\frac{\partial H}{\partial p}\left(\varphi^t\left(q_{\varphi^t}\left(Q,p\right),p\right)\right) = \frac{\partial Q^t}{\partial q}\left(q_{\varphi^t}\left(Q,p\right),p\right)\cdot\frac{\diff}{\diff t}q_{\varphi^t}\left(Q,p\right),
		\end{equation*}
		so
		\begin{equation*}
			\max_{t\in\left[\frac{k-1}{m},\frac{k}{m}\right]}\left|\frac{\diff}{\diff t}q_{\varphi^t\left(Q,p\right)}\right| \leq \max\left\|\left(\frac{\partial Q}{\partial q}\right)^{-1}\right\|_\text{op}\max\left|\nabla H\right|,
		\end{equation*}
		and since $ \left\|D\varphi-\Id\right\|_\text{op} < 1 $, the restriction also satisfies $ \left\|\frac{\partial Q}{\partial q}-\Id\right\|_\text{op} < 1 $ therefore
		\begin{equation*}
			\left\|\left(\frac{\partial Q}{\partial q}\right)^{-1}\right\|_\text{op} \leq \frac{1}{1-\left\|\frac{\partial Q}{\partial q}-\Id\right\|_\text{op}} \leq \frac{1}{1-\left\|D\varphi-\Id\right\|_\text{op}},
		\end{equation*}
		and we get
		\begin{align*}
			\star\star\leq\frac{2\max\left|\nabla H\right|}{1-\max\left\|D\varphi-\Id\right\|_\text{op}}\cdot\frac{1}{m}.
		\end{align*}
		Finally, since $ \widetilde{q_{\varphi^t}}\left(Q,p\right) $ satisfies $ \left|Q^t\left(\widetilde{q_{\varphi^t}}\left(Q,p\right),p\right) - Q\right| \leq \sqrt{E} $ and $ q_{\varphi^t}\left(\cdot,p\right) = \left(Q^t\left(\cdot,p\right)\right)^{-1} $  we get
		\begin{align*}
			\star\star\star&\leq\left|Q^{\nicefrac{k}{m}}\left(q_{\varphi^{\nicefrac{k}{m}}}\left(Q,p\right),p\right)-Q\right| + \left|P^{\nicefrac{k}{m}}\left(q_{\varphi^{\nicefrac{k}{m}}},p\right)-P^{\nicefrac{k}{m}}\left(\widetilde{q_{\varphi^{\nicefrac{k}{m}}}},p\right)\right| \\
			&\leq \sqrt{E} + \max\left\|DP^t\right\|_\text{op}\left|q_{\varphi^{\nicefrac{k}{m}}}-\widetilde{q_{\varphi^{\nicefrac{k}{m}}}}\right| \\
			&\leq \sqrt{E} + \max\left\|D\varphi\right\|_{\text{op}}\cdot\max\left\|\frac{\partial q_{\varphi^t}\left(\cdot,p\right)}{\partial Q}\right\|_{\text{op}}\left|Q^{\nicefrac{k}{m}}\left(q_{\varphi^{\nicefrac{k}{m}}}\left(Q,p\right),p\right)-Q\right| \\
			&\leq \sqrt{E} + \max\left\|D\varphi\right\|_{\text{op}}\cdot\max\left\|\left(\frac{\partial Q}{\partial q}\right)^{-1}\right\|_\text{op}\sqrt{E} \\
			&\leq \sqrt{E} + \frac{1+\max\left\|D\varphi-\Id\right\|_{\text{op}}}{1-\max\left\|D\varphi-\Id\right\|_{\text{op}}}\sqrt{E} = \frac{2}{1-\max\left\|D\varphi-\Id\right\|_{\text{op}}}\sqrt{E},
		\end{align*}
		so all together we get
		\begin{align*}
			\left|S\left(Q,p\right)-\widetilde{S}\left(Q,p\right)\right| &\leq \frac{2\max\left|\nabla H\right|^2}{1-\max\left\|D\varphi-\Id\right\|_\text{op}}\cdot\frac{1}{m} + \frac{2\max\left|\nabla H\right|}{1-\max\left\|D\varphi-\Id\right\|_{\text{op}}}\sqrt{E}.
		\end{align*}
		proving the Lemma.
	\end{proof}
	
	\subsection{Implementation steps}
	We now outline the implementation of the radial profile algorithm as follows, for $ \varphi\in\mathcal{R}_{T,T^\prime,R}^N $, assume we are given the following input
	\begin{equation*}
		N \geq 2, \quad \left\{h_j\in C_c^\infty\left(\RR_{\geq0}\right),c_j\in\RR^2,r_j > 0\right\}_{j=1}^N, \quad T,T^\prime,R>0, \quad m\in\NN,
	\end{equation*}
	that satisfies for all $ j=1,\dots,N $
	\begin{equation*}
		\supp\left(h_j\right)\subset \left[0,r_j\right],\quad \left|h_j^\prime\right|\leq T,\quad \left|h_j^{\prime\prime}\right|\leq T^\prime, \quad \max_j\left|c_j\right|+\sqrt{2r_j}\leq R,
	\end{equation*}
	and also
	\begin{equation*}
		\max_{j}\left\{\sqrt{2r_j}\left(\left|c_j\right|+\sqrt{2r_j}\right) T^\prime + T\right\} < \frac{1}{2}.
	\end{equation*}
	We use the following computation steps in order to obtain a barcode:
	\begin{enumerate}
		\item Set $ H_j:\RR^2\to\RR $ by
		\begin{equation*}
			H_j\left(x\right) = h_j\left(\frac{\left|x-c_j\right|^2}{2}\right),
		\end{equation*}
		therefore $ \max\left|\nabla H_j\right| \leq T\sqrt{2r_j} \leq TR $ and the Hamiltonian flow $ \varphi^t_j $ generated by $ H_j $ is given by \Cref{eq:radialflow} with bounds on the $ C^1 $ distance of $ \varphi_j = \varphi^1_j $ from the identity given by \Cref{eq:distfromIDbounds} so according to \Cref{prop:smallhamtogenfunc}, we know that each $ \varphi_j $ has a generating function $ S_j:\RR^2\to\RR $.
		
		$ \varphi_j $ is supported in $ \BB_{\sqrt{2r_j}}\left(c_j\right) $ so denoting $ r = \max_j r_j $ we have $ \varphi_j\in\mathcal{H}_{R,rT} $, thus $ \varphi = \varphi_N\circ\dots\circ\varphi_1 $ is indeed given by a finite composition of Hamiltonian diffeomorphisms from $ \mathcal{H}_{R,rT} $ each having a generating function $ S_j:\RR^2\to\RR $ without auxiliary variables, as the suggested algorithm from \Cref{thm:mainthm} states.
		
		\item For each $ j=1,\dots,N $ and for each $ k=1,\dots,m $, we go over all $ \left(Q,p\right)\in\frac{1}{m}\ZZ^2 \cap\BB_{r_j}\left(c_j\right) $ and numerically solve the inverse function problem
		\begin{equation*}
			Q_j^{\nicefrac{k}{m}}\left(\widetilde{q_{\varphi_j^{\nicefrac{k}{m}}}},p\right) = Q,
		\end{equation*}
		inside $ \BB_{r_jT}\left(c_j\right) $ with maximal error $ E $, where $ \varphi^t_j\left(q,p\right) = \left(Q_j^t,P_j^t\right) $. We denote the solution as $ \widetilde{q_{\varphi_j^{\nicefrac{k}{m}}}}\left(Q,p\right) $ and set
		\begin{equation*}
			\widetilde{S_j}\left(Q,p\right) = \frac{1}{m}\sum_{k=1}^m H_j\left(Q,P_j^{\nicefrac{k}{m}}\left(\widetilde{q_{\varphi_j^{\nicefrac{k}{m}}}}\left(Q,p\right),p\right)\right),
		\end{equation*}
		so by \Cref{Lemma:approxgf}, we get functions $ \widetilde{S_j}:\frac{1}{m}\ZZ^2\to\RR $ satisfying
		\begin{equation*}
			\max_{\left(Q,p\right)\in\frac{1}{m}\ZZ^2}\left|S_j\left(Q,p\right) - \widetilde{S_j}\left(Q,p\right)\right| \leq \underbrace{\frac{4RT^2}{1-T^\prime}}_{C_1\left(T,T^\prime,R\right)}\cdot\frac{1}{m}+\underbrace{\frac{2\sqrt{2R}T}{1-T^\prime}}_{C_2\left(T,T^\prime,R\right)}\sqrt{E}.
		\end{equation*}
		
		\item Following the initial steps of the algorithm suggested in \Cref{thm:mainthm} (i.e \Cref{sec:compositionformula} and \Cref{sec:obtainingGFQI}), we obtain an expression for a function $ \widetilde{S}:\frac{1}{m}\ZZ^{2N}\to\RR $ such that
		\begin{equation*}
			\left\|\widetilde{S} - S\vert_{\frac{1}{m}\ZZ^{2N}}\right\|_{C^0} \leq C_1\left(T,T^\prime,R\right)\cdot\frac{N}{m} + C_2\left(T,T^\prime,R\right)\cdot N\sqrt{E},
		\end{equation*}
		where $ S:\RR^{2N}\to\RR $ is a GFQI of $ \varphi $.
		
		\item We apply the barcode calculation algorithm on $ \widetilde{S} $ to get a barcode $ \mathcal{B} $ with an additional error term coming from $ \left\|\widetilde{S} - S\vert_{\frac{1}{m}\ZZ^{2N}}\right\|_{C^0} $ so according to \Cref{thm:mainthm} we finally have
		\begin{equation*}
			\dbot\left(\mathcal{B},\mathcal{B}\left(\varphi\right)\right) \leq C\left(R\right)\cdot\sqrt{2}RT\cdot\frac{N^2}{m} + C_1\left(T,T^\prime,R\right)\cdot\frac{N}{m} + C_2\left(T,T^\prime,R\right)\cdot N\sqrt{E},
		\end{equation*}
		where $ C\left(R\right) $ is given by \Cref{eq:CofR}.
	\end{enumerate}
	
	\subsection{Example} \label{sec:exmpale}
	In this section we offer a few examples in which our implementation computes the GF-barcode for a composition of two compactly supported Hamiltonian diffeomorphisms of $ \RR^2 $ generated by radial Hamiltonian functions with different centers and the results indeed give good approximation of the conjectured barcode.
	
	Let $ T > 0 $ and $ a\in\left(0,1\right) $ and consider the profile function $ h:\left[0,\frac{1}{2}\right]\to\RR $ with derivative $ h^\prime $ which is a smooth version of the function depicted in \Cref{img:profiles} and satisfies $ \max\left|h^\prime\right| = T $.

	\begin{figure}[h] 
		\centering
		\begin{minipage}{.4\linewidth}
			\includegraphics[width=0.7\linewidth,center]{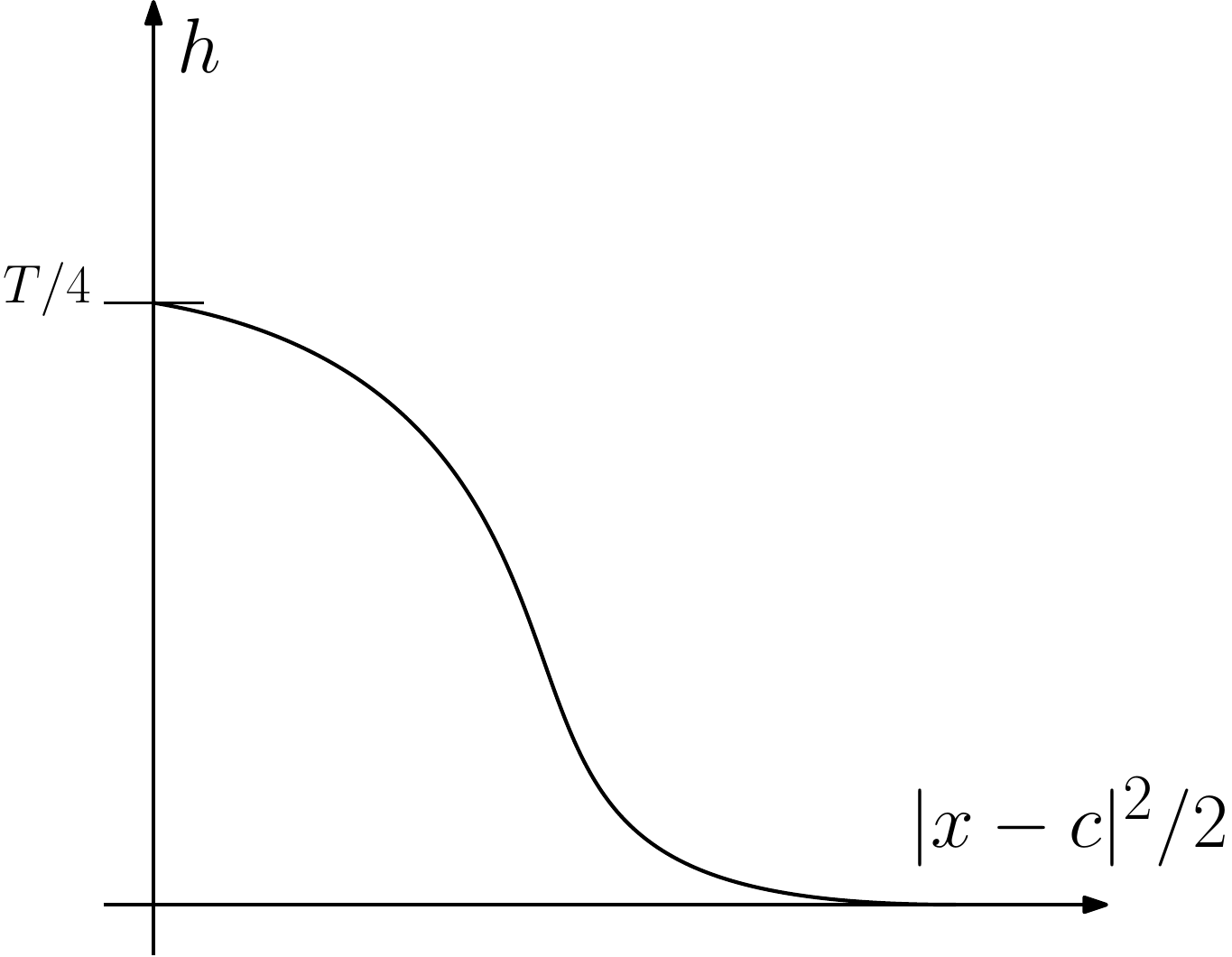}
			\label{img:hgraph}
		\end{minipage}
		\hspace{.05\linewidth}
		\begin{minipage}{.4\linewidth}
			\includegraphics[width=0.7\linewidth,center]{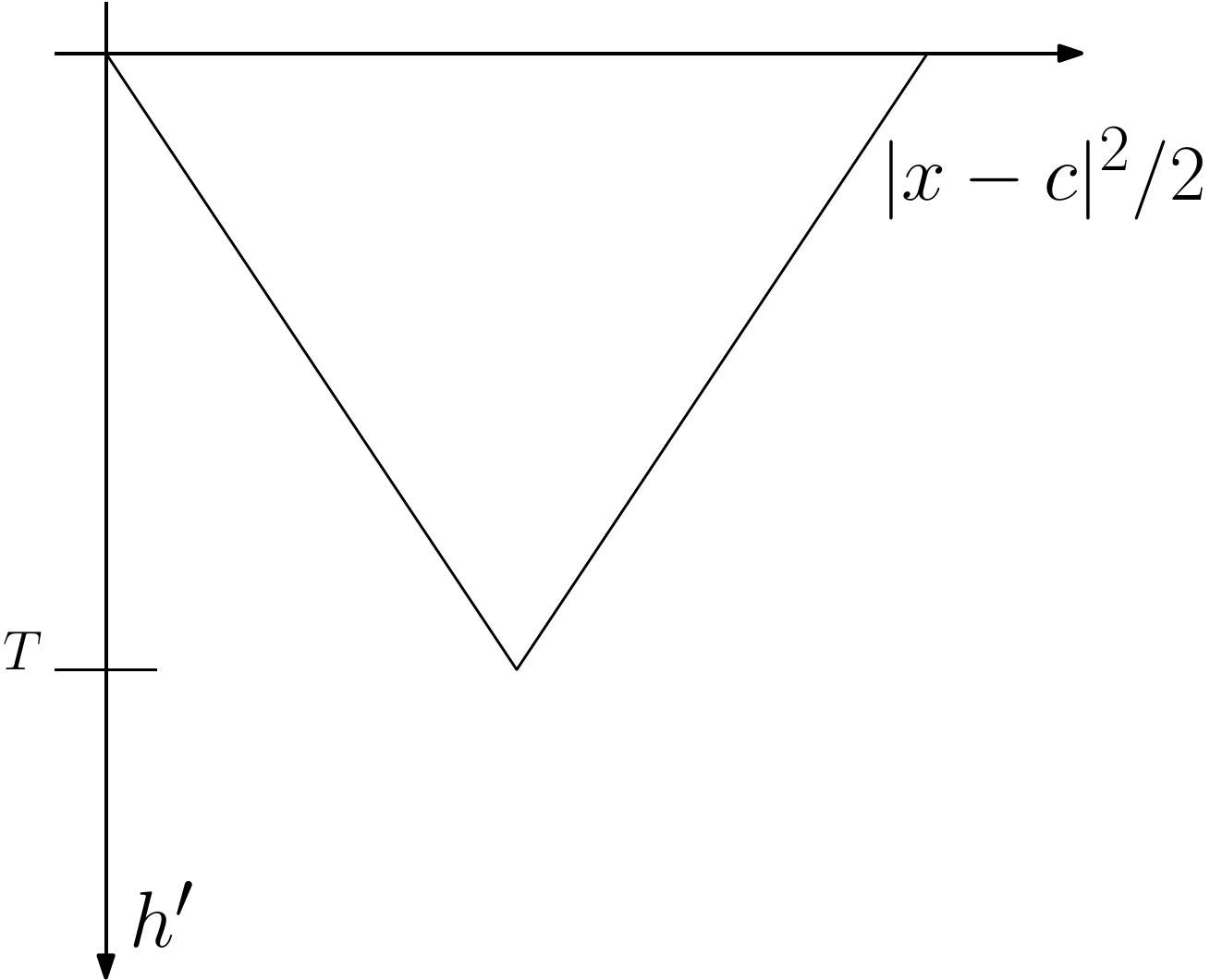}
			\label{img:hprimegraph}
		\end{minipage}
	\caption{The profile function $ h $ and its derivative $ h^\prime $}
	\label{img:profiles}
	\end{figure}

	We denote two center points by $ c_1 = \left(-a,0\right),c_2 = \left(a,0\right)\in\RR^2 $ and two radial Hamiltonian functions $ H_1\left(x\right) = h\left(\nicefrac{\left|x-c_1\right|^2}{2}\right), H_2\left(x\right) = h\left(\nicefrac{\left|x-c_2\right|^2}{2}\right) $ that generate flows $ \varphi^t,\psi^t\in\Ham_c\left(\RR^2,\omega_0\right) $ compactly supported inside $ \BB_1\left(c_1\right) $ and $ \BB_1\left(c_2\right) $ respectively. Recall that in this case $ \left\|\varphi^1-\Id\right\|_{C^0} = \left\|\psi^1-\Id\right\|_{C^0} = T $ and we call $ T $ the \textit{scaling factor} of this example. In order to ensure these diffeomorphisms poses simple generating functions and to enable realistic running time and memory requirements we take $ T\ll1 $.
	
	The diffeomorphism $ \varphi^1=\varphi $ has a trivial fixed point at $ c_1 $ and $ \psi^1=\psi $ has a trivial fixed point at $ c_2 $, we denote them as $ \left[\max H_1\right]$ and $\left[\max H_2\right] $ respectively. Since we treat our domain as the compactification $ \RR^2\cup\left\{\infty\right\} \cong \SS^2 $, the diffeomorphisms $\varphi$ and $\psi$ also have a common trivial fixed point at $ \infty $ denoted by $ \left[\min H\right] $. Furthermore, due to the symmetry of this setup one may also estimate additional fixed points in the overlap of both supports, we think of these fixed points as periodic orbits of the concatenation $ \psi^t\sharp\varphi^t $ and number them $ \left[1\right],\dots,\left[P\right] $ by their position along the horizontal axis as shown in \Cref{img:dynamics}. Note that the total number of periodic orbits $ P\left(a,T\right) $ depends on the distance $ a $ and on the scaling factor $ T $.
	
	We estimate the actions $ \mathcal{A}\left(\left[1\right]\right),\dots,\mathcal{A}\left(\left[P\right]\right) $ of each periodic orbit and deduce their Conley-Zehnder indices (see \cite[Section 8.1]{Pol19}), then we compile this information into a conjectured barcode for different values of $ a $, resulting in $ 4 $ possible cases, described in \Cref{img:barcodes}. Note that bars correspond to homology classes represented by chains of critical points of the generating function, considered as the fixed points denoted earlier.
	\pagebreak
	
	\begin{figure}[h!]
		\centering
		\includegraphics[width=0.8\linewidth]{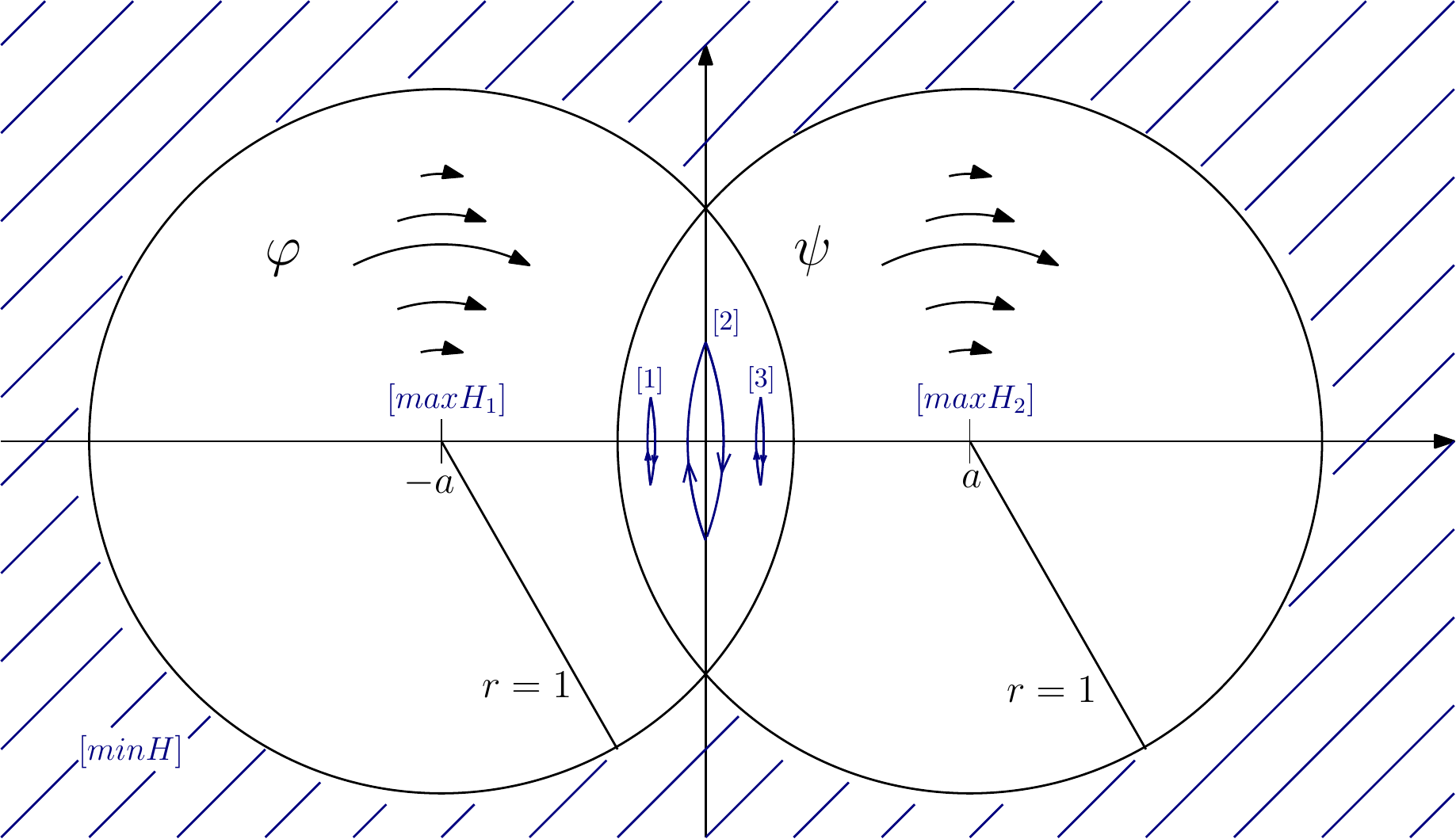}
		\caption{The dynamics of $ \psi^t\sharp\varphi^t$ including the {\color{blue} periodic orbits} for case $ \textrm{II} $. }
		\label{img:dynamics}
	\end{figure}

	\begin{figure}[h!]
	\centering
	\begin{minipage}{.45\linewidth}
		\includegraphics[width=\linewidth]{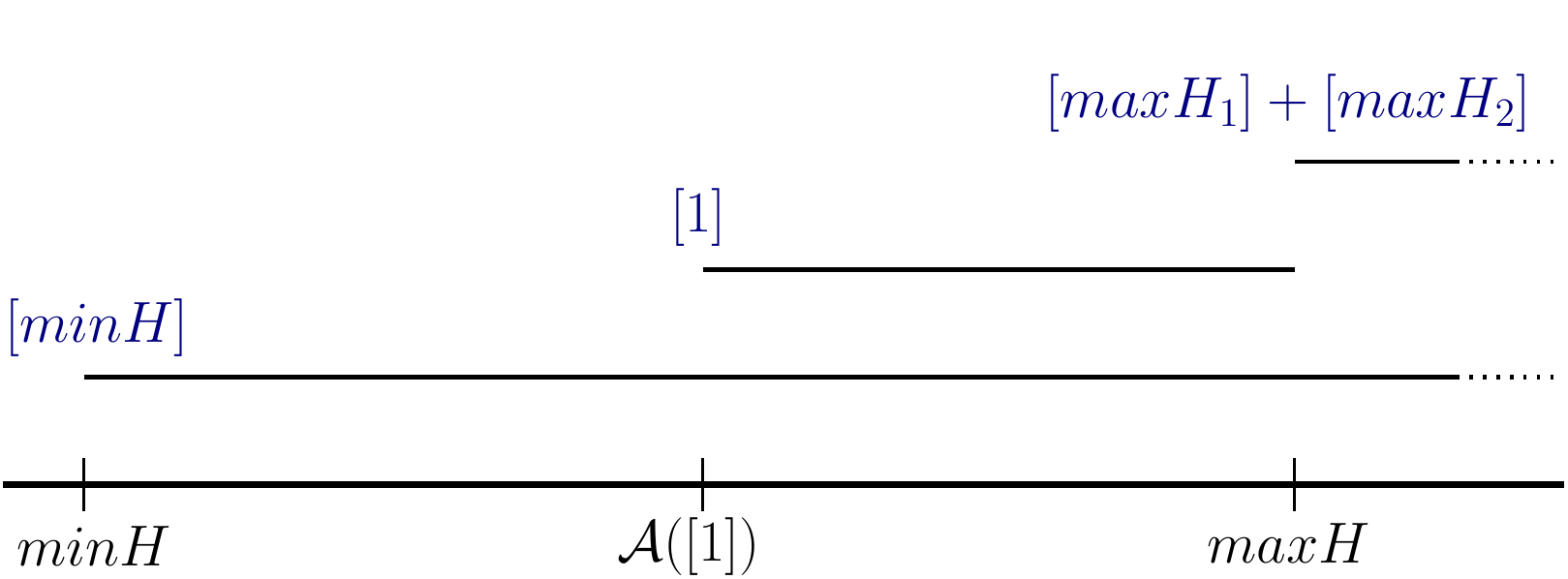}
		\caption*{Case \textrm{I}: 1 non-trivial orbit }
	\end{minipage}
	\hspace{.05\linewidth}
	\begin{minipage}{.45\linewidth}
		\includegraphics[width=\linewidth]{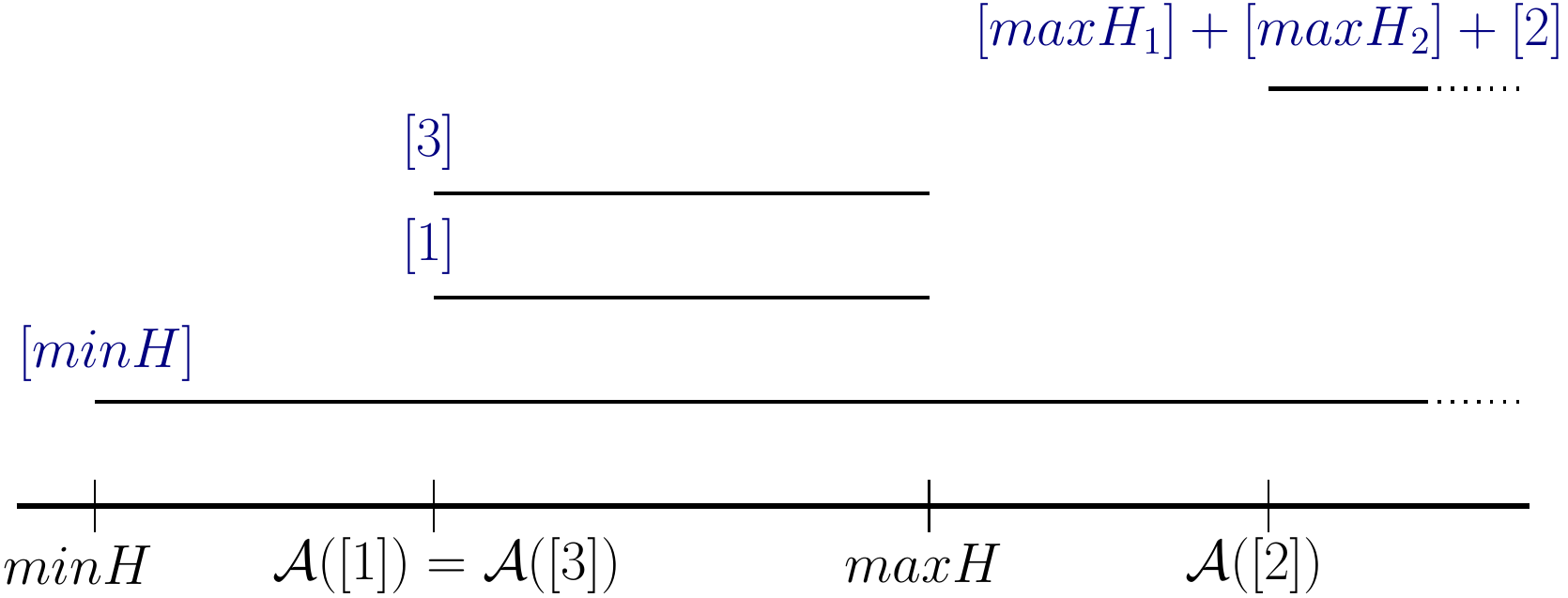}
		\caption*{Case \textrm{II}: 3 non-trivial orbits }
	\end{minipage}
	\begin{minipage}{.45\linewidth}
	\includegraphics[width=\linewidth]{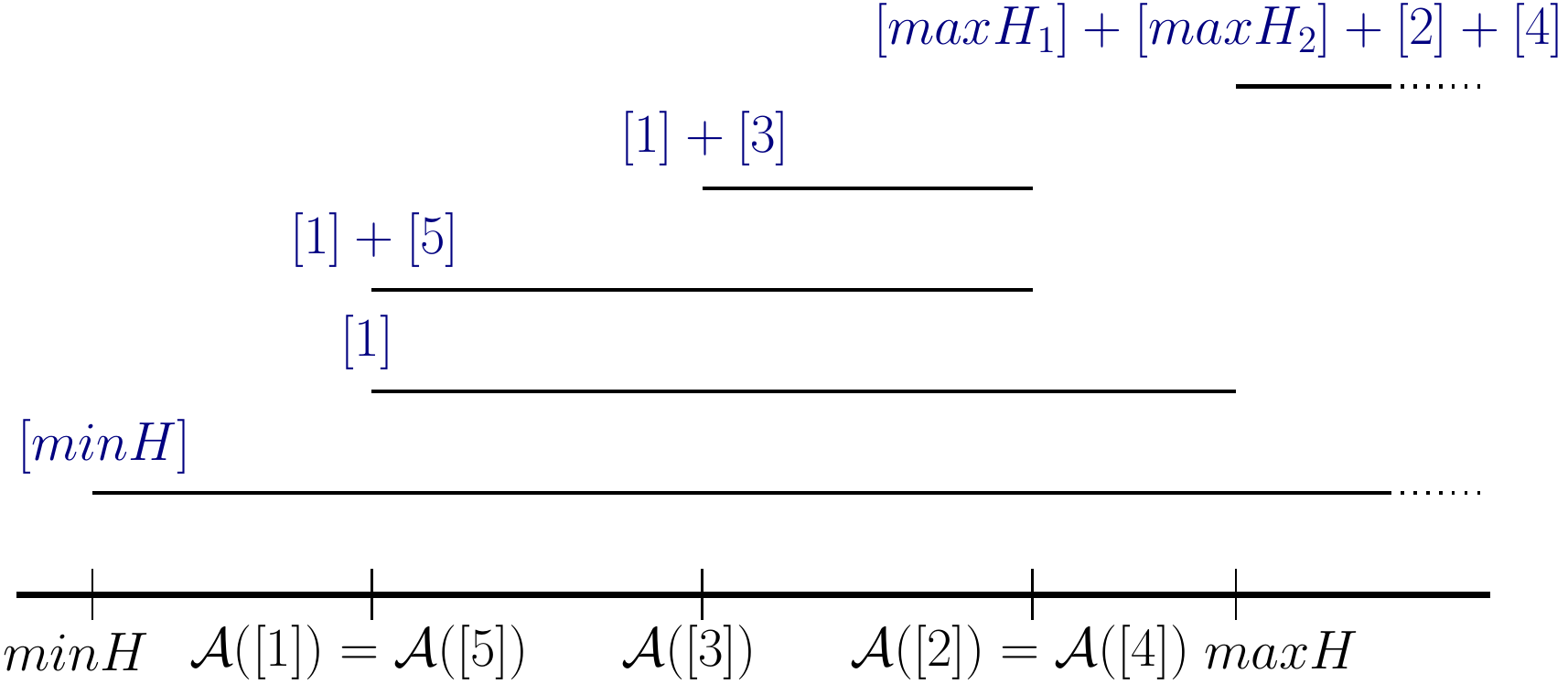}
	\caption*{Case \textrm{III}: 5 non-trivial orbits, type 1 }
	\end{minipage}
	\hspace{.05\linewidth}
	\begin{minipage}{.45\linewidth}
	\includegraphics[width=\linewidth]{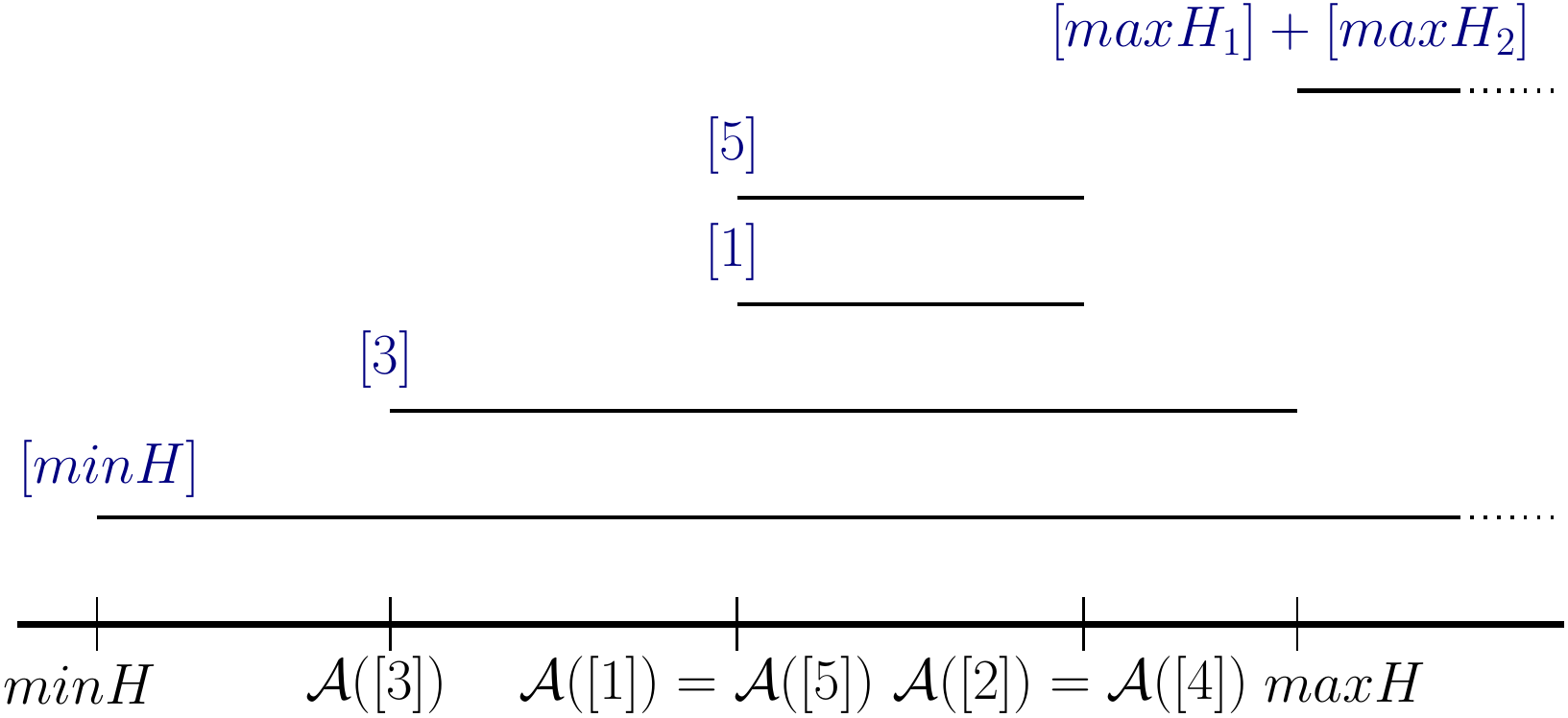}
	\caption*{Case \textrm{IV}: 5 non-trivial orbits, type 2 }
	\end{minipage}
	\caption{Conjectured barcodes $ \mathcal{B}\left(\psi\circ\varphi\right) $ for different values of $a$ including {\color{blue} generators} of each bar}
	\label{img:barcodes}
	\end{figure}

	Finally, we compute these examples using the suggested implementation of our algorithm with different mesh parameters $ m $ and scaling factors $ T $ yielding barcodes $ \mathcal{B} $ whose bottleneck distances to the conjectured barcodes are then measured relative to the scaling factor $ T $. The results are given in \Cref{table:1} for a specific choice of scaling factor ($ T = 2\pi\times 10^{-4} $) and mesh parameter ($ m = 64 $) but we mention that these results are consistent through other viable choices of parameters $ m,T $ as well.

	\begin{table}[h!]
		\centering
		{\makegapedcells\begin{tabular}{||c | c | c | c ||} 
				\hline
				$ a $ & Case of $ \mathcal{B}\left(\psi\circ\varphi\right) $ & Estimated $ \nicefrac{\text{longest finite bar}}{T} $ & $ \nicefrac{\dbot\left(\mathcal{B}\left(\psi\circ\varphi\right),\mathcal{B}\right)}{T} $ \\ [0.5ex] 
				\hline\hline
				$ 0.70 $ & 	Case \textrm{II} & $ \sim1.57\times 10^{-2} $ & $ \sim2.03\times10^{-6} $ \\
				\hline
				$ 0.72 $ & Case \textrm{III} & $ \sim2.46\times 10^{-2} $ & $ \sim3.47\times10^{-6} $ \\
				\hline
				$ 0.73 $ & Case \textrm{IV} & $ \sim3.18\times 10^{-2} $ & $ \sim1.60\times10^{-5} $ \\
				\hline
				$ 0.75 $ & Case \textrm{I} & $ \sim5.85\times 10^{-2} $ & $ \sim1.23\times10^{-8} $ \\
				\hline
		\end{tabular}}
		\caption{Different cases of conjectured barcodes along with the estimated length of their longest finite bar, and bottleneck distance between them and the approximated barcodes, relative to the scaling factor}
		\label{table:1}
	\end{table}
	
	Hence the approximated barcodes are indeed quite accurate and the error in bottleneck distance is actually much smaller than the one given by our theoretical bound in these examples which in all cases was $ \sim 3.4\times 10^{-1} $ after normalization by the scaling factor $ T $.

	\begin{remark}
		It would be interesting to implement practically our algorithm for larger values of $ N $ and $ T $.
	\end{remark}

	
	\pagebreak
	\printbibliography

\end{document}